\definecolor{cof}{RGB}{219,144,71}
\definecolor{pur}{RGB}{186,146,162}
\definecolor{greeo}{RGB}{91,173,69}
\definecolor{greet}{RGB}{52,111,72}
\newtheorem{Theo}{Theorem}[section]
\newtheorem{Prop}[Theo]{Proposition}
\newtheorem{Cor}[Theo]{Corollary}
\newtheorem{Lemma}[Theo]{Lemma}
\theoremstyle{definition}
\newtheorem{Exam}[Theo]{Example}
\def\mystrut(#1,#2){\vrule height #1pt depth #2pt width 0pt}
\newcommand{\rep}{{\rm rep}}
\newcommand{\Hom}{{\rm Hom}}
\newcommand{\Ext}{{\rm Ext}}
\newcommand{\Z}{\mathbb{Z}}
\newcommand{\C}{\mathcal{C}}
\newcommand{\A}{\mathcal{A}}
\newcommand{\ch}{\check}
\newcommand{\ext}{{\rm ext}}
\newcommand{\sperp}{\rotatebox[origin=c]{90}{$\models$}}
\begin{document}

\author{Charles Paquette}
\address{Charles Paquette, Department of Mathematics, University of Connecticut, Storrs, CT, 06269-3009, USA.}
\email{charles.paquette@uconn.edu}

\title{Accumulation points of real Schur roots}

\maketitle

\begin{abstract} Let $k$ be an algebraically closed field and $Q$ be an acyclic quiver with $n$ vertices. Consider the category $\rep(Q)$ of finite dimensional representations of $Q$ over $k$. The exceptional representations of $Q$, that is, the indecomposable objects of $\rep(Q)$ without self-extensions, correspond to the so-called real Schur roots of the usual root system attached to $Q$. These roots are special elements of the Grothendieck group $\Z^n$ of $\rep(Q)$. When we identify the dimension vectors of the representations (that is, the non-negative vectors of $\Z^n$) up to positive multiple, we see that the real Schur roots can accumulate in some directions of $\mathbb{R}^n \supset \Z^n$. This paper is devoted to the study of these accumulation points. After giving new properties of the canonical decomposition of dimension vectors, we show how to use this decomposition to describe the rational accumulation points. Finally, we study the irrational accumulation points and we give a complete description of them in case $Q$ is of weakly hyperbolic type.
\end{abstract}

\section*{Introduction}

One important problem in representation theory of algebras is to describe the non-isomorphic indecomposable modules of a given finite dimensional algebra. Let $A$ be an associative finite dimensional $k$-algebra, where $k$ is an algebraically closed field. It has been shown by Drozd \cite{Drozd} that the category of finitely generated right $A$-modules is either tame or wild, but not both. In the tame case, one can hope to get a classification of the non-isomorphic indecomposable modules, since for a fixed dimension vector $d$, the indecomposable modules of dimension vector $d$, up to isomorphism, is a union of a finite set with finitely many one-parameter families. In the wild case, it is usually hopeless, though many things could still be said. It is known from Kac's results \cite{Kac0, Kac} that when $A$ is hereditary, thus Morita equivalent to a path algebra $kQ$ where $Q$ is an acyclic quiver, the dimension vectors of the indecomposable modules (or representations) are the so-called positive roots of the root system associated to the Kac-Moody Lie algebra of the underlying graph of $Q$. Therefore, it is an important problem to understand these positive roots. In particular, one should try to have a better understanding of the real roots: they form a nice subset of the set of all positive roots and the representation theory attached to these roots is better understood. In \cite{Vivien1, Vivien2, Vivien3}, the authors have studied accumulation points of real roots in more general root systems, where accumulation has a meaning when one looks at the rays associated to the roots. In a root system defined from an acyclic quiver, it is straightforward to check that the real roots always accumulate to the hypersurface defined by the Tits form of the quiver. Some fractal-like properties of these accumulation points are uncovered in \cite{Vivien1}.

\medskip

Geometric representation theory of algebras provides nice tools for studying the positive roots of a quiver $Q$. In geometric representation theory, one often restricts to (positive) Schur roots, which have nicer geometric properties.
The real Schur roots then correspond to the so-called exceptional representations, which are the indecomposable representations without self-extensions. These representations play a fundamental role in tilting theory, perpendicular categories and in the theory of cluster algebras.
In this paper, we propose to study accumulation points of real Schur roots of an acyclic quiver $Q$. We will see how the problem is related to the study of canonical decomposition of a dimension vector. In Section one, which is an introductory section, we will recall the definition of roots, Schur roots,  the Euler-Ringel form and the Coxeter transformation. We will give the precise definition of an accumulation point of dimension vectors. In Section two, we will consider quivers of weakly hyperbolic type, which are quivers of wild type such that the Tits form of $Q$ has nice convex properties. We will provide nice properties of these quivers and show how to build a wide family of such quivers. In Section three, we will consider the so-called canonical decomposition of a dimension vector and the closely related notions of Schur sequences and exceptional sequences. In Section four, we will give a new property of canonical decompositions related to imaginary Schur roots. Finally, in Section five, we will study accumulation points of real Schur roots in general. Our main theorem states that an isotropic Schur root is a rational accumulation point of real Schur roots. Conversely, a rational accumulation point of real Schur roots has canonical decomposition involving only isotropic Schur roots. We will give a complete description of all accumulation points (rational and irrational) of real Schur roots for quivers of weakly hyperbolic type. They could all be obtained using accumulation points of quivers with two vertices.

\section{Euler form, roots and Schur roots}

Throughout this paper, $Q=(Q_0,Q_1)$ denotes a connected acyclic quiver with $n = |Q_0|$ vertices and $k$ denotes an algebraically closed field. For an arrow $\alpha \in Q_1$, we denote by $t(\alpha) \in Q_0$ its \emph{tail} and by $h(\alpha) \in Q_0$ its \emph{head}. For convenience, we will assume that $Q_0 = \{1,2, \ldots, n\}$. We denote by $\rep(Q)$ the category of finite dimensional representations of $Q$, which is a Hom-finite hereditary abelian category equivalent to the category of finite dimensional right $kQ$-modules. For each $x \in Q_0$, we denote by $S_x, P_x$ and $I_x$ the simple representation at $x$, the indecomposable projective representation at $x$ and the indecomposable injective representation at $x$, respectively. Finally, for $d=(d_1, \ldots,d_n) \in (\Z_{\ge 0})^n$, we denote by $\rep(Q,d)$ the set of all representations $M$ such that for $i \in Q_0$, $M(i) = k^{d_i}$. Recall that in geometric representation theory, $\rep(Q,d)$ is identified with $\prod_{\alpha \in Q_1}M_{d_{h(\alpha)} \times d_{t(\alpha)}}(k)$, where $M_{i \times j}(k)$ is the set of all $i \times j$ matrices over $k$. The algebraic group $${\rm GL}_d(k):=\prod_{1 \le i \le n}{\rm GL}_{d_i}(k)$$ acts on $\rep(Q,d)$ in the following way. For $g=(g_1, \ldots, g_n) \in {\rm GL}_d(k)$ and $M=(M_{\alpha})_{\alpha \in Q_1}\in \rep(Q,d)$, we have $$g \cdot M = \left(g_{h(\alpha)}M_{\alpha}g_{t(\alpha)}^{-1}\right)_{\alpha \in Q_1}.$$

\medskip

\subsection{Euler form}

Given a representation $M$ of $Q$, we denote by $d_M$ its dimension vector, which is an element in the Grothendieck group
$K_0(\rep(Q)) = \mathbb{Z}^n$ of $\rep(Q)$, and where $(d_M)_i$, for $1 \le i \le n$, is the $k$-dimension of $M(i)$. Let $\langle -,- \rangle$ stand for the bilinear form defined on $\mathbb{Z}^n$ as follows.
If $d=(d_1,\ldots,d_n) \in \mathbb{Z}^n$ and $e=(e_1,\ldots,e_n) \in \mathbb{Z}^n$, then
$$\langle d,e \rangle = \sum_{i=1}^n{d_ie_i} \; - \hspace{-5pt} \sum_{\alpha \in Q_1} d_{t(\alpha)}e_{h(\alpha)}.$$
This is known as the \emph{Euler-Ringel form} associated to $Q$. This is a (non-symmetric) bilinear form defined on the Grothendieck group of $\rep(Q)$. We denote by $E_Q$ (or simply $E$ when there is no risk of confusion) the $n \times n$ matrix of this Euler-Ringel form for $Q$ in the canonical basis, that is, the basis given by the dimension vectors of the simple representations. Recall that if $M,N$ are two representations of $Q$, then $$\langle d_M, d_N \rangle = d_M^T E d_N = {\rm dim}_k \Hom(M,N) - {\rm dim}_k \Ext^1(M,N).$$ In the sequel, we set $A_Q = E_Q + E_Q^T$ (or simply $A$ when there is no risk of confusion) for the matrix of the symmetrized form $(-,-)_Q$ corresponding to the Euler-Ringel form.  Moreover, for $x \in \mathbb{R}^n$, we write $q(x)$ for $\langle x,x \rangle = \frac{1}{2}(x,x)$ which is known as the \emph{Tits form of $Q$} or the \emph{quadratic form associated to $Q$}. The homogeneous equation $q(x)=0$ is referred to as the \emph{quadric associated to $Q$}.

\medskip

\subsection{Roots}

A vector $v=(v_1,\ldots,v_n)$ in $\mathbb{R}^n$ is \emph{non-negative} if $v_i \ge 0$ for all $i$ and \emph{positive} if it is non-negative and non-zero. It is called \emph{strictly positive} provided $v_i > 0$ for all $i$. Similarly, it is \emph{non-positive} if $v_i \le 0$ for all $i$ and \emph{negative} if it is non-positive and non-zero. It is called \emph{strictly negative} provided  $v_i < 0$ for all $i$. Given a vector $d \in \Z^n$, write supp$(d)$ for the support of $d$, that is, the subset of vertices $i$ in $Q_0$ with $d_i \ne 0$. For $1 \le i \le n$, set $e_i:= d_{S_i}$ and let $\mathfrak{S}_i$ be the $n \times n$ matrix such that $\mathfrak{S}_i\cdot d = d - ( d, e_i ) e_i$. The matrix $\mathfrak{S}_i$ is called the \emph{reflection} at $i$. The \emph{Weyl group} $\mathcal{W}_Q$ of $Q$ is the multiplicative group generated by all the matrices $\mathfrak{S}_1, \ldots, \mathfrak{S}_n$.  A \emph{real root} of $Q$ is any $d \in \mathbb{Z}^n$ such that $w d = e_i$ for some $w \in \mathcal{W}_Q$ and some $1 \le i \le n$. A nice fact about real roots is that a real root is either a positive vector or a negative vector. Hence we get the corresponding notions of a \emph{positive real root} and a \emph{negative real root}. Moreover, a negative real root is the negative of a positive real root. Now, set $$W = \{d \; \text{positive vector in}\; \Z^n \mid ( d, e_i ) \le 0 \; \text{for all} \; i, {\rm supp}(d) \; \text{is connected}\}.$$
A \emph{positive imaginary root} is any element of the form $wd$ for some $d \in W$ and $w \in \mathcal{W}_Q$. A positive imaginary root is always a positive vector. A \emph{negative imaginary root} is just the negative of a positive imaginary root. Observe that an imaginary root $d$ satisfies $q(d) \le 0$. Such a root will be called \emph{isotropic} if $q(d) = 0$ and \emph{strictly imaginary} if $q(d) < 0$. Now, if we denote by $\Phi$ the set of all roots of $Q$, we get
$$\Phi = \Phi_{\rm Re} \cup \Phi_{\rm Im} = (\Phi^+_{\rm Re} \cup -\Phi^+_{\rm Re}) \cup (\Phi^+_{\rm Im} \cup -\Phi^+_{\rm Im}),$$
where $\Phi_{\rm Re}$ is the set of all real roots, $\Phi^+_{\rm Re}$ the set of all positive real roots, $\Phi_{\rm Im}$ the set of all imaginary roots and $\Phi^+_{\rm Im}$ the set of all positive imaginary roots. All unions are disjoint unions.

\medskip

From Kac's results, if $M$ is an indecomposable representation, then $d_M$ is a positive root. Conversely, if $d$ is a positive root, then $d = d_M$ for some indecomposable representation $M$. In geometric representation theory, one often restricts to Schur roots, that is, the dimension vectors of the representations having a trivial endomorphism ring. Such representations are called \emph{Schur representations}. It is well known that a positive root $d$ is Schur if and only if the variety $\rep(Q,d)$ of $d$-dimensional representations of $Q$ contains a dense open set whose objects are all indecomposable. The \emph{real Schur roots} are the positive real roots that are Schur roots and the \emph{imaginary Schur roots} are the positive imaginary roots that are Schur roots. If $d$ is a positive real root, then there is a unique indecomposable representation $M$, up to isomorphism, such that $d = d_M$. If, in addition, $d$ is Schur, then $\Ext^1(M,M)=0$. Hence the real Schur roots correspond bijectively to the \emph{exceptional representations}, up to isomorphism.

\medskip

\subsection{Definition of $\Delta_Q$ and accumulation points}

Two nonzero vectors $v_1, v_2$ in $\mathbb{R}^n$ are said to be \emph{equivalent} if one is a positive multiple of the other. We denote by $\Delta_Q$ the positive cone in $\mathbb{R}^n$ modulo the equivalence relation above. We can think of $\Delta_Q$ as the set of all rays in the positive orthant of $\mathbb{R}^n$. If $v \in \mathbb{R}^n$ is positive, then the corresponding point in $\Delta_Q$ is denoted by $[v]$, or simply by $v$ when there is no risk of confusion. A ray $r$ in $\Delta_Q$ is \emph{rational} if there exists a (nonzero) rational point in $r$. Observe that the nonzero dimension vectors modulo the above equivalence relation correspond precisely to the rational rays of $\Delta_Q$.

\medskip

Now, we need to give a precise definition of an accumulation point in $\Delta_Q$. Given a positive vector $\alpha \in \mathbb{R}^n$, let $s(\alpha)$ be the sum of the entries of $\alpha$. We define $\check{\alpha}$ to be $(1/s(\alpha))\alpha$ and we call it the \emph{normalized vector associated to $\alpha$}. If $\alpha$ is a root, then $\ch \alpha$ is a \emph{normalized root}. Observe that $\alpha, \ch \alpha$ lie in the same ray in $\Delta_Q$. We denote by $\Delta(1)$ the subset of $\mathbb{R}^n$ consisting of the positive normalized vectors. Every point in $\Delta(1)$ can be written uniquely as a convex combination of the simple dimension vectors. Also, every ray $\alpha$ in $\Delta_Q$ is uniquely represented by the element $\check{\alpha}$ in $\Delta(1)$. Observe that if $\alpha$ is a root, then $\check{\alpha}$ need not be a root, unless it is a simple root.  Of course, $\langle \alpha, \alpha \rangle$ and $\langle \check{\alpha}, \check{\alpha} \rangle$ have the same sign. Since $\Delta(1)$ is a topological space (with the canonical Euclidean topology inherited from $\mathbb{R}^n$) and every ray in $\Delta_Q$ has a unique representative in $\Delta(1)$, this gives $\Delta_Q$ the structure of a topological space. An open set in $\Delta_Q$ is then just the cone over an open set in $\Delta(1)$. A sequence $(r_n)_{n \ge 0}$ of rays in $\Delta_Q$ converges if the sequence $(\ch r_n)_{n \ge 0}$ of vectors converges in $\Delta(1)$. If $d_n \in r_n$ for all $n \ge 0$, we will say that the sequence of vectors $(d_n)_{n \ge 0}$ \emph{converges}. In this case, when the sequence $(r_n)_{n \ge 0}$ contains an infinite number of pairwise distinct rays, the corresponding limit ray is then called an \emph{accumulation point} or \emph{accumulation ray}. When this accumulation point is a rational ray, it will often be identified with a dimension vector in it, or with the corresponding element in $\Delta(1)$.

\medskip

\subsection{Coxeter transformation}

We denote by $C_Q$ (or simply $C$ when there is no risk of confusion) the Coxeter matrix for $Q$, that is, $C = -E^{-1}E^{T}$. It is well known that $C \in \mathcal{W}_Q$. Recall that if $M$ is indecomposable and non-projective in $\rep(Q)$, then $C\cdot d_M$ is the dimension vector of the Auslander-Reiten translate $\tau M$ of $M$. If $M = P_x$, then $C \cdot d_{P_x}  = -d_{I_x}$. Similarly, if $M$ is indecomposable and non-injective in $\rep(Q)$, then $C^{-1}\cdot d_M$ is the dimension vector of $\tau^{-1} M$. If $M = I_x$, then $C^{-1} \cdot d_{I_x}  = -d_{P_x}$.

\medskip

The following discussion about eigenvectors of $C$ can be found in \cite{Pena}. If $Q$ is of Dynkin type, then the only possibility for a real eigenvalue of $C$ is $\lambda = -1$ (and it may not happen).  If $Q$ is of Euclidean type, then $\lambda = -1$ \emph{may} be an eigenvalue of $C$ and $\lambda = 1$ is an eigenvalue of $C$ of algebraic multiplicity $2$ and geometric multiplicity $1$. There are no other real eigenvalues. If $Q$ is of wild type, then there is a largest positive real eigenvalue $\lambda_+ > 1$ with geometric multiplicity $1$ and a smallest positive real eigenvalue $\lambda_- = 1/\lambda_+$ also of geometric multiplicity $1$. Moreover, there exist strictly positive eigenvectors $y_+,y_-$ associated to the eigenvalues $\lambda_+$ and $\lambda_-$, respectively. The corresponding points in $\Delta_Q$ are called the \emph{special eigenvectors} for $Q$, and the corresponding eigenvalues are called the \emph{special eigenvalues}. In case $Q$ is of Euclidean type, the null root $\delta$ (that is, the only isotropic Schur root) is a strictly positive vector and an eigenvector corresponding to $\lambda =1$.  The corresponding point in $\Delta_Q$ will also be called the \emph{special eigenvector} of $Q$, and $\lambda = 1$ the \emph{special eigenvalue}. In this case, we set $\delta = y^- = y^+$. Therefore, when $Q$ is connected of infinite type, we have one or two special eigenvectors in $\Delta_Q$.

\section{Weakly hyperbolic quivers}

A quiver $Q$ is said to be \emph{weakly hyperbolic} or of \emph{weakly hyperbolic type} if the matrix $A_Q$ has exactly one negative eigenvalue and the others are positive. More generally, it is called \emph{at most weakly hyperbolic} if it has at most one non-positive eigenvalue. Hence a quiver of Dynkin or Euclidean (extended Dynkin) type is at most weakly hyperbolic. From \cite[Lemma 4.5]{KacBook}, if $Q$ is connected and $A_Q$ contains exactly one zero eigenvalue and all the others are positive, then $Q$ is of Euclidean type; and if all eigenvalues of $A_Q$ are positive, then $A$ is of Dynkin type. Hence, a connected quiver $Q$ which is at most weakly hyperbolic is either of Dynkin type, of Euclidean type, or is weakly hyperbolic. Note that the notion of being weakly hyperbolic is more general then that of being hyperbolic: a connected quiver $Q$ which is not of Dynkin or Euclidean type is said to be \emph{hyperbolic} if every full subquiver is a union of quivers of Euclidean and Dynkin types. The reader is referred to \cite[page 50]{KacBook} for more details on quivers of hyperbolic type. Note that a quiver of hyperbolic type is connected by definition. However, a quiver which is (at most) weakly hyperbolic needs not be connected.

\medskip

The following lemma will be useful in the sequel. In essentially follows from the description of the imaginary cone by Kac.

\begin{Lemma} \label{boundary1}
Let $c_1,c_2$ be two accumulation points of real roots in $\Delta_Q$.  Then the cone spanned by the rays $c_1,c_2$ passes through the region $q(x) \le 0$.  It lies on the boundary $q(x)=0$ if and only if $(c_1,c_2) =0$, that is, if and only if $\langle c_1, c_2 \rangle = - \langle c_2, c_1 \rangle$.
\end{Lemma}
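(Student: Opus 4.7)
The plan is to reduce everything to a short calculation on $q(t_1c_1+t_2c_2)$ after invoking two standard properties of the imaginary cone of $Q$ (call it $Z$, in the sense of Kac): (i) any accumulation ray of real roots lies in the closure $\overline{Z}$; (ii) the symmetric form is non-positive on $\overline{Z}$, that is, $(\alpha,\beta)\le 0$ and in particular $q(\alpha)\le 0$ for all $\alpha,\beta\in \overline{Z}$. The phrase ``essentially follows from the description of the imaginary cone by Kac'' is pointing exactly to (i) and (ii).

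Given $c_1,c_2\in\overline{Z}$, for any $t_1,t_2\ge 0$ expand
\[
q(t_1c_1+t_2c_2) \;=\; t_1^2\,q(c_1) \;+\; t_1t_2\,(c_1,c_2) \;+\; t_2^2\,q(c_2).
\]
Each of the three summands is the product of a non-positive scalar and a non-negative parameter, so $q$ is non-positive throughout the cone spanned by $c_1,c_2$. This proves the first claim.

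For the equivalence: if the cone lies on $\{q=0\}$, take $t_1=t_2=1$ and observe that the three non-positive summands above add up to zero, so each must vanish; in particular $(c_1,c_2)=0$. Conversely, from $(c_1,c_2)=0$ I need $q(c_1)=q(c_2)=0$, after which the displayed expression is identically zero on the cone. This converse is the only non-formal step, and I would deduce it from the sharper property that two non-zero vectors of $\overline{Z}$ which are $(-,-)$-orthogonal are necessarily isotropic. In the weakly hyperbolic context of this section, where $A_Q$ has signature $(-,+,\dots,+)$, this is immediate from linear algebra: the $(-,-)$-orthogonal complement of a vector $c_1$ with $q(c_1)<0$ is positive definite, hence meets $\overline{Z}$ only at $0$. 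In general it falls out of Kac's analysis of $\overline{Z}$. The equivalent reformulation $\langle c_1,c_2\rangle = -\langle c_2,c_1\rangle$ is then immediate from $(c_1,c_2)=\langle c_1,c_2\rangle+\langle c_2,c_1\rangle$.
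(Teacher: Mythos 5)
Your strategy for the first claim parallels the paper's: invoke Kac's description of the imaginary cone (the paper cites \cite[Lemma~5.8]{KacBook} for the inclusion of accumulation rays in $\overline{Z}$ and \cite[Exercise~5.10e]{KacBook} for $q\le 0$ on $\overline{Z}$), and then deduce non-positivity of $q$ on the cone spanned by $c_1,c_2$. Your version replaces the paper's appeal to convexity of $\overline{Z}$ by the sharper fact $(\alpha,\beta)\le 0$ for all $\alpha,\beta\in\overline{Z}$. That sharper fact is in fact correct (reduce $\beta$ to the fundamental domain $K$ by an element of $\mathcal W_Q$ and use that $w\alpha$ is again a non-negative combination of simples), but it is worth knowing that the weaker combination the paper uses (convexity plus $q\le 0$) already suffices; you do not need the full bilinear non-positivity for the first claim.

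For the iff there is a genuine, if easily repairable, gap. You reduce the converse direction to the claim that two nonzero $A_Q$-orthogonal vectors of $\overline{Z}$ are isotropic, and you only justify this under the signature hypothesis $(-,+,\dots,+)$. But the lemma is used for \emph{arbitrary} $Q$: in the proof of Proposition~\ref{LemmaPlane} the converse implication is precisely the case where $Q$ is \emph{not} at most weakly hyperbolic, so the signature argument is unavailable exactly where you would need it, and ``falls out of Kac's analysis'' is not a proof. The fix is easier than what you attempted: an accumulation point of real roots is automatically isotropic, since $q(\alpha)=1$ for a real root $\alpha$ and hence $q(\check\alpha)=1/s(\alpha)^2\to 0$ along any sequence of pairwise distinct real roots (this is Proposition~\ref{HLP} in the paper). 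Thus $q(c_1)=q(c_2)=0$ outright, no structural property of $\overline{Z}$ is needed, and the displayed expansion collapses to $q(t_1c_1+t_2c_2)=t_1t_2(c_1,c_2)$, from which both directions of the equivalence are immediate. You should replace the ``sharper property'' step with this direct observation; as written, your converse does not go through for general $Q$.
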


\begin{proof}
By \cite[Lemma 5.8]{KacBook}, we know that the closure of the imaginary cone contains the convex hull of the accumulation points of the real roots. Hence, the line segment joining $c_1$ and $c_2$ is in the closure of the imaginary cone. Now, any point $x$ in the imaginary cone satisfies $q(x) \le 0$ by \cite[Exercice 5.10e]{KacBook}. Hence, the same holds for the closure of the imaginary cone.
\end{proof}

\begin{Lemma} \label{twoOnes}Let $Q$ be at most weakly hyperbolic. If $x,y$ are positive vectors and lie on the quadric $q(z)=0$, then the line segment joining $x,y$ lies in the region $q(z) \le 0$.
\end{Lemma}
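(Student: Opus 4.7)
The plan is to split into cases by the signature of $A_Q$. When $A_Q$ is positive definite (the Dynkin case), no nonzero vector satisfies $q=0$, so the statement is vacuous. When $A_Q$ is merely positive semidefinite (the Euclidean case, possibly with adjoined Dynkin components), the locus $\{q=0\}$ coincides with $\ker A_Q$, so both $x,y$ lie in this kernel, and the whole segment lies there as well; hence $q$ vanishes identically on it. The substantive case is when $A_Q$ has signature $(n-1,1)$. Using block diagonality of $A_Q$ together with the positive-definiteness of the Dynkin blocks, one reduces immediately to the case $Q$ connected weakly hyperbolic, since $x,y$ cannot have nonzero support on any Dynkin component.

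Expanding gives
\begin{equation*}
q((1-t)x+ty) = (1-t)^2 q(x) + t(1-t)(x,y) + t^2 q(y) = t(1-t)(x,y),
\end{equation*}
so it suffices to prove $(x,y)\le 0$ for any two positive isotropic vectors $x,y$.

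To establish this, I would diagonalize via Perron-Frobenius. The off-diagonal entries of $A_Q$ are non-positive (they are negatives of arrow multiplicities), so for $c$ large the matrix $cI - A_Q$ is non-negative and irreducible, and Perron-Frobenius provides a strictly positive eigenvector $v_-$ of $A_Q$ for its unique negative eigenvalue $\lambda_1$. Orthogonally decompose $\mathbb{R}^n = \mathbb{R}v_- \oplus V^+$ with respect to the symmetric form, so that $(-,-)$ is negative on $\mathbb{R}v_-$ and positive definite on $V^+$. Writing $x = \alpha_x v_- + w_x$ and $y = \alpha_y v_- + w_y$, the identity $\alpha_x(v_-,v_-) = (x,v_-) = \lambda_1\, x\cdot v_-$ has both sides strictly negative (since $v_-$ is strictly positive and $x$ is a nonzero non-negative vector), forcing $\alpha_x > 0$; similarly $\alpha_y > 0$. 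The isotropy conditions translate into $(w_x,w_x) = \alpha_x^2|(v_-,v_-)|$ and $(w_y,w_y) = \alpha_y^2|(v_-,v_-)|$. Cauchy-Schwarz on the inner-product space $V^+$ then yields $(w_x,w_y) \le \alpha_x\alpha_y|(v_-,v_-)|$, and plugging into $(x,y) = \alpha_x\alpha_y(v_-,v_-) + (w_x,w_y)$ produces $(x,y) \le 0$.

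The main technical point is the Perron-Frobenius step guaranteeing strict positivity of $v_-$, which is what lets one separate the positive orthant from the ``other nappe'' of the light cone; once this is in hand, the decomposition and Cauchy-Schwarz argument is routine linear algebra.
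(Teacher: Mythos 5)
Your approach is genuinely different from the paper's. The paper derives this lemma from Kac's description of the imaginary cone (via Lemma~\ref{boundary1}): if the positive orthant met both nappes of the double cone $\{q \le 0\}$, each nappe would contain an accumulation point of positive real roots, and the segment joining them would lie inside $\{q\le 0\}$ yet also have to pass through the origin, which is impossible for a segment joining two positive vectors. Your argument is purely linear-algebraic: the identity $q((1-t)x + ty) = t(1-t)(x,y)$ reduces the claim to $(x,y)\le 0$, which you extract from the orthogonal splitting $\mathbb{R}^n = \mathbb{R}v_-\oplus V^+$ relative to the Perron eigenvector of the negative eigenvalue, together with Cauchy--Schwarz on $V^+$. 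The two pivots are the strict positivity of $v_-$ (forcing $\alpha_x,\alpha_y>0$) and the saturation $(w_x,w_x) = -\alpha_x^2(v_-,v_-)$ coming from isotropy; together they give $(x,y)\le 0$. This is more elementary and self-contained than the paper's route and cleanly isolates the Lorentzian geometry behind the statement, whereas the paper's version reuses machinery (the imaginary cone lemma) that it needs elsewhere anyway.

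There is, however, a genuine gap in your reduction to the connected case. You assert that when $Q$ is disconnected, $x$ and $y$ cannot have nonzero support on a Dynkin component. This is false: take $Q = \mathbb{A}_1\sqcup K$ with $K$ the three-arrow Kronecker quiver, so $A_Q$ has eigenvalues $2,5,-1$ and $Q$ is at most weakly hyperbolic; the strictly positive vector $x=(1,1,1)$ satisfies $q(x)=1+(1+1-3)=0$ with nonzero coordinate on the $\mathbb{A}_1$ component. This matters because without the reduction, $cI-A_Q$ is block-diagonal and hence reducible, so the irreducibility you invoke for Perron--Frobenius fails, and you cannot conclude that $v_-$ is strictly positive. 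The fix is small and stays within your framework: apply Perron--Frobenius to the weakly hyperbolic block alone to get its strictly positive Perron eigenvector, extend it by zero on the Dynkin blocks to obtain a non-negative $\lambda_1$-eigenvector $v_-$ of the full $A_Q$, and then observe that $x\cdot v_-=0$ would force $x$ to be supported entirely on the Dynkin blocks and hence $q(x)>0$, a contradiction. So $x\cdot v_->0$, the coefficient $\alpha_x$ is still positive, and the rest of your argument goes through unchanged.
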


\begin{proof} We may assume that the matrix $A = A_Q$ has exactly one negative eigenvalue, all the other eigenvalues being positive. Then it is easily seen that the region $q(z) \le 0$ is the cone over an higher dimensional ellipse. Let $C_1, C_2=-C_1$ be the two convex components of that cone. Let $P$ be the positive orthant in $\mathbb{R}^n$, which is seen as (half of) a cone. If one of $P \cap C_1, P\cap C_2$ is zero, then the result is clear. So assume both intersections are nonzero. Then $C_i \ne P \cap C_i$ for $i = 1,2$. Therefore, each $C_i$ intersects the boundary of $P$. Since a boundary of $P$ corresponds to a proper subquiver of $Q$, we know that for $i=1,2$, $P\cap C_i$ contains a point $c_i$ which is an accumulation point of positive real roots. By Lemma \ref{boundary1}, the line segment joining $c_1,c_2$ lies in $q(z) \le 0$, hence lies in one of $P \cap C_1, P \cap C_2$, which is impossible.
\end{proof}

The following result characterizes quivers which are at most weakly hyperbolic.

\begin{Prop} \label{LemmaPlane}
The quiver $Q$ is at most weakly hyperbolic if and only if the quadric $q(z)=0$ does not contain a plane.
\end{Prop}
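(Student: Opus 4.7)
The plan is to translate ``the quadric $q(z)=0$ contains a plane'' into a signature condition on the matrix $A_Q$ of the symmetrized form, and then compare it with the signature condition defining ``at most weakly hyperbolic''.

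\emph{Reformulation.} A $2$-dimensional linear subspace $P \subseteq \mathbb{R}^n$ is contained in $\{q=0\}$ if and only if $P$ is totally isotropic for $(-,-)_Q$: the polarization identity $q(x+y)=q(x)+q(y)+(x,y)_Q$ forces $(x,y)_Q=0$ on $P$ whenever $q$ vanishes on $P$, and conversely $(x,x)_Q=0$ implies $q(x)=0$. Writing the signature of $A_Q$ as $(r,s,t)$ (numbers of positive, negative, and zero eigenvalues), the classical structure theorem identifies the maximum dimension of a totally isotropic subspace as $\min(r,s)+t$: the $t$-dimensional radical is automatically isotropic, and modulo the radical the form is non-degenerate of Witt index $\min(r,s)$. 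Thus $\{q=0\}$ contains a plane iff $\min(r,s)+t \ge 2$, while ``$Q$ is at most weakly hyperbolic'' reads exactly $s+t \le 1$.

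\emph{Easy direction and reductions.} Because $\min(r,s)\le s$, we have $\min(r,s)+t \le s+t$, so $s+t\le 1$ immediately implies $\min(r,s)+t<2$; hence no plane lies in the quadric. For the converse, assume $s+t\ge 2$; we must show $\min(r,s)+t\ge 2$. The case $t\ge 2$ is trivial, and when $t=1,\,s\ge 1$ we use $r\ge 1$ (from $\mathrm{tr}(A_Q)=2|Q_0|>0$) to conclude $\min(r,s)+t\ge 1+1$. The remaining case, $t=0$ and $s\ge 2$, reduces to the nontrivial claim that $r\ge 2$.

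\emph{The main obstacle.} To establish $r\ge 2$ when $t=0,\,s\ge 2$, decompose $Q$ into connected components. If $s$ is distributed across at least two distinct components, each contributes at least one positive eigenvalue by the trace argument, so $r\ge 2$ immediately. Otherwise, a single connected subquiver $Q'$ carries $s_{Q'}\ge 2$ and $t_{Q'}=0$, forcing $|Q'_0|\ge 3$. Now the adjacency matrix $B_{Q'}=2I-A_{Q'}$ is symmetric, non-negative, and irreducible, so Perron-Frobenius gives a simple largest eigenvalue $\mu_1$ with $\mu_1\ge |\mu_i|$ for every eigenvalue $\mu_i$ of $B_{Q'}$. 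If $r_{Q'}=1$, exactly one eigenvalue of $B_{Q'}$, say $\mu_{n'}$, lies in $(-\infty,2)$ and the remaining $|Q'_0|-1$ lie in $(2,\infty)$. The trace relation $\mathrm{tr}(B_{Q'})=0$ and the lower bound $>2$ on the middle $|Q'_0|-2\ge 1$ eigenvalues give $\mu_1+\mu_{n'}<-2(|Q'_0|-2)$, while Perron-Frobenius yields $\mu_1\ge -\mu_{n'}$, together producing $0<-2(|Q'_0|-2)$, which contradicts $|Q'_0|\ge 3$. Hence $r_{Q'}\ge 2$, finishing the proof.

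The genuine obstacle is this last step: the inequality $\min(r,s)+t\le s+t$ is strict exactly when $r<s$, and the delicate combinatorial fact is that for a connected acyclic quiver with more than two vertices one cannot have $r=1$ together with $t=0$. Once this is in hand, the rest is a straightforward exercise in Witt theory.
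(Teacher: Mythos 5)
Your proof is correct and takes a genuinely different route from the paper's. The reformulation via the formula $\min(r,s)+t$ for the maximal dimension of a totally isotropic subspace of a form of signature $(r,s,t)$ is cleaner than the paper's direct Sylvester-diagonalization case analysis, and it makes the structure transparent: the whole proposition reduces to comparing $\min(r,s)+t$ with $s+t$, which comes down to the single nontrivial claim that $r\ge 2$ whenever $t=0$ and $s\ge 2$. The paper reaches essentially the same obstacle (phrased as: at least two diagonal entries of the diagonalized form are $+1$), but resolves it very differently, invoking the geometry of the cone $q(z)\ge 0$, boundary rays of the positive orthant and their subquivers, and accumulation points of real roots via Kac's imaginary cone (Lemmas \ref{boundary1} and \ref{twoOnes}). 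Your argument instead applies Perron--Frobenius to the adjacency matrix $B_{Q'}=2I-A_{Q'}$ of a connected component, exploiting $\mathrm{tr}(B_{Q'})=0$ together with the spectral-radius bound $\mu_1\ge -\mu_{\min}$ to derive a contradiction when $|Q'_0|\ge 3$. This is more elementary and self-contained (pure linear algebra and spectral graph theory, with no appeal to Kac's theory of the imaginary cone), it isolates exactly what is combinatorially special about generalized Cartan matrices of graphs, and it handles disconnected quivers cleanly, whereas the paper works under a standing connectedness hypothesis. Both proofs dispatch the easy direction quickly: the paper via Cauchy--Schwarz after diagonalization, yours immediately from $\min(r,s)\le s$.
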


\begin{proof}
Choose a basis in $\mathbb{R}^n$ such that the matrix $D$ of the symmetrized form $(-,-)_Q$ is diagonal with $0,1,-1$ as entries. By Sylvester's Law of Inertia, $Q$ is at most weakly hyperbolic if and only if $D$ contains at least $n-1$ diagonal entries which are ones. Suppose that $Q$ is at most weakly hyperbolic. If $A$ has no negative eigenvalue, then $A$ is positive or semi-positive definite, and it is then well known that $Q$ is of Dynkin or Euclidean type. The quadric is either zero or a ray, and the result is clear.
Suppose that $A$ has one negative eigenvalue, hence exactly one diagonal entry of $D$ is a $-1$.
Hence we may assume that for $x = (x_1, \ldots, x_n)$, the equation $q(x)=0$ is given by
$$x_1^2 = x_2^2 + \cdots + x_n^2.$$
Assume to the contrary that the quadric contains a plane. Let $u,v \in \mathbb{R}^n$ be the direction vectors of a plane in the quadric. Since $q(u+v)=q(u)=q(v)=0$, we get
$$(u_1 + v_1)^2 = (u_2 + v_2)^2 + \cdots + (u_n + v_n)^2.$$
Therefore,
$$u_1v_1 = u_2v_2 + \cdots + u_nv_n.$$
Denote by $\langle -,- \rangle'$ the scalar product in $\mathbb{R}^{n-1}$ such that for $x=(x_2, \ldots, x_n), y=(y_2,\ldots, y_n)$, $\langle x, y \rangle' = x_2y_2 + \cdots + x_ny_n$. Let $u' = (u_2,\ldots, u_n)$ and $v' = (v_2, \ldots, v_n)$. We get
$$\langle u', u' \rangle' \langle v', v' \rangle' = u_1^2v_1^2 = (\langle u', v' \rangle')^2.$$
By the Cauchy-Schwarz inequality, $u', v'$ are linearly dependent. There exists a real number $\lambda$ with $u' = \lambda v'$. Considering $q(u - \lambda v) = 0$, we get $(u_1 - \lambda v_1)^2=0$ and hence, $u_1 = \lambda v_1$.  Therefore, $u,v$ are linearly dependent, a contradiction. This proves the necessity.

Assume now that the quadric does not contain a plane. Assume to the contrary that $Q$ is not at most weakly hyperbolic. Hence, $D$ has at least two diagonal entries which are not positive.  If $D$ has at least two diagonal entries which are zeros, then it is clear that $z^TDz = 0$ contains a plane, and so is $q(z)=0$. Hence, at most one diagonal entry of $D$ is zero. With no loss of generality, we may assume that the first two diagonal entries $d_{11}, d_{22}$ of $D$ are not positive, and $d_{11}$ is the only diagonal entry which is possibly zero. Thus, $d_{22}=-1$. On the other hand, since $A_Q$ is neither negative definite nor negative semi-definite, there exists a diagonal entry in $D$ which is a one. Assume that the third diagonal entry $d_{33}$ is one. For $1 \le i \le n$, let $e_i \in \mathbb{R}^n$ be the vector with a one as the $i$-th entry and zeros everywhere else. Assume first that $d_{11}=0$. Then the vectors $e_1, e_2+e_3$ are the direction vectors of a plane in $z^TDz=0$, a contradiction. So assume $d_{11}=-1$. We claim that at least two diagonal entries in $D$ are ones. Assume otherwise, so all the diagonal entries but $d_{33}$ are $-1$. As argued in the proof of Lemma \ref{twoOnes}, the region $q(z) \ge 0$ is a cone over an higher dimensional ellipse. Let $C_1, C_2=-C_1$ be the two convex components of that cone. Let $P$ be the positive orthant in $\mathbb{R}^n$. All the boundary rays of $P$ lie in $q(z) \ge 0$. If all of $P$ lies in $q(z) \ge 0$, then $Q$ is of Dynkin or Euclidean type, and we get a contradiction. Let $p_1, \ldots, p_n$ be the boundary rays of $P$. These boundary rays can be partitioned into two non-empty sets $S_1, S_2$ with $S_1 \subseteq C_1$, $S_2 \subseteq C_2$. We may assume that we have three distinct rays $p,q,r$ such that $p \in S_1$ and $q,r \in S_2$. The line segment joining $p$ and $q$ then contains elements in $q(z) \le 0$, and hence corresponds to a wild subquiver of $Q$ having two vertices. There exists an accumulation point $c_1$ of real roots lying on this line segment and in $C_1$. Similarly, there exists an accumulation point $c_2$ of real roots lying on the line segment joining $p$ and $r$ and in $C_1$. By Lemma \ref{boundary1}, the line segment joining $c_1, c_2$ is in $q(z) \le 0$. But since $C_1$ is convex, this line is also in $q(z) \ge 0$. Thus, the line is on the boundary $q(z)=0$. But then, the plane with direction vectors $c_1,c_2$ lies on the boundary $q(z)=0$, a contradiction. This proves the claim. So we may assume that the third and fourth diagonal entries $d_{33}, d_{44}$ of $D$ are ones.  Thus, we see that the vectors $e_1 + e_3, e_2 + e_4$ are the direction vectors of a plane in $z^TDz=0$, a contradiction.
\end{proof}

The following characterizes the quivers of weakly hyperbolic type.

\begin{Prop} \label{PropWeakly}
The quiver $Q$ is of weakly hyperbolic type if and only if ${\rm det}(A_Q) < 0$ and there exists a full subquiver $Q'$ with $n-1$ vertices which is at most weakly hyperbolic.
\end{Prop}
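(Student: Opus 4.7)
The plan is to use Cauchy's interlacing theorem for symmetric matrices applied to the principal submatrix of $A_Q$ obtained by deleting one row/column. The crucial observation is that when $Q'$ is a full subquiver of $Q$ on the vertex set $Q_0 \setminus \{i\}$, the matrix $A_{Q'}$ is exactly the principal $(n-1)\times(n-1)$ submatrix of $A_Q$ obtained by deleting the $i$-th row and column, since the entries $(A_Q)_{j,k}$ for $j,k \neq i$ only depend on arrows between $j$ and $k$.

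For the necessity direction, suppose $Q$ is weakly hyperbolic, so $A_Q$ has eigenvalues $\lambda_1 < 0 < \lambda_2 \leq \cdots \leq \lambda_n$. Then $\det(A_Q) = \lambda_1\lambda_2\cdots\lambda_n < 0$. For any full subquiver $Q'$ on $n-1$ vertices, letting $\mu_1 \leq \cdots \leq \mu_{n-1}$ be the eigenvalues of $A_{Q'}$, Cauchy interlacing gives
$$\lambda_1 \leq \mu_1 \leq \lambda_2 \leq \mu_2 \leq \cdots \leq \mu_{n-1} \leq \lambda_n.$$
In particular $\mu_i \geq \lambda_2 > 0$ for $i \geq 2$, so $A_{Q'}$ has at most one non-positive eigenvalue, meaning $Q'$ is at most weakly hyperbolic.

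For the sufficiency direction, suppose $\det(A_Q) < 0$ and some full subquiver $Q'$ on $n-1$ vertices is at most weakly hyperbolic. Let $\lambda_1 \leq \cdots \leq \lambda_n$ be the eigenvalues of $A_Q$ and $\mu_1 \leq \cdots \leq \mu_{n-1}$ those of $A_{Q'}$. Since $Q'$ is at most weakly hyperbolic, $\mu_2, \ldots, \mu_{n-1} > 0$. Cauchy interlacing then gives $\lambda_{i+1} \geq \mu_i > 0$ for $i \geq 2$, so $\lambda_3, \ldots, \lambda_n$ are all positive. The hypothesis $\det(A_Q) = \lambda_1\lambda_2 \cdots \lambda_n < 0$ forces $\lambda_1\lambda_2 < 0$, which in turn (together with $\lambda_1 \leq \lambda_2$) forces $\lambda_1 < 0$ and $\lambda_2 > 0$. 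Hence $A_Q$ has exactly one negative eigenvalue and $n-1$ positive ones, i.e.\ $Q$ is weakly hyperbolic.

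The main conceptual point (and the only place where one could slip) is identifying $A_{Q'}$ with a principal submatrix of $A_Q$; once that is in place, both directions are short applications of Cauchy interlacing, and sign considerations on the determinant distinguish the weakly hyperbolic case from the degenerate case where $\lambda_2$ could equal zero.
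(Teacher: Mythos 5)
Your proof is correct, and it takes a genuinely different and cleaner route than the paper. The key observation you make explicit (that $A_{Q'}$ is a principal submatrix of $A_Q$) is also implicitly used by the paper, so that part is sound, and Cauchy interlacing applies because $A_Q$ is symmetric. For the necessity direction, the paper instead invokes its geometric characterization (Proposition~\ref{LemmaPlane}: $Q$ is at most weakly hyperbolic iff the quadric $q=0$ contains no plane), observing that the quadric of a full subquiver is a slice of the quadric of $Q$, and so cannot contain a plane either; your interlacing argument reaches the same conclusion without going through the geometry of the quadric at all, and moreover recovers the paper's stronger statement that \emph{every} $(n-1)$-vertex full subquiver is at most weakly hyperbolic. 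For the sufficiency direction, the paper works through Sylvester's law of inertia with explicit congruence transformations $S'$, $S$, $T$, $W$ reducing $A_Q$ to a diagonal form $\mathrm{diag}(I_{n-2},c,d)$ and reading off the signature; your interlacing argument replaces this entire bookkeeping by one application of the eigenvalue inequalities $\lambda_{i+1}\ge\mu_i>0$ for $i\ge2$ together with the sign of the determinant. Both proofs are valid, but yours is shorter, more elementary, and self-contained, whereas the paper's version leans on the machinery (Proposition~\ref{LemmaPlane}) developed for other purposes in that section.
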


\begin{proof}
Suppose that $Q$ is of weakly hyperbolic type. Clearly, ${\rm det}(A_Q) < 0$. Let $Q'$ be any full subquiver with $n-1$ vertices. By Proposition \ref{LemmaPlane}, the quadric of $Q$ does not contain a plane. Therefore, the same holds for the quadric of $Q'$. Hence, by the use of Proposition \ref{LemmaPlane} again, $Q'$ is at most weakly hyperbolic. This proves the necessity. For the sufficiency, assume that ${\rm det}(A_Q) < 0$ and there exists a full subquiver $Q'$ with $n-1$ vertices which is at most weakly hyperbolic. We may assume that the upper left principal submatrix of $A$ of size $(n-1) \times (n-1)$ is $A':=A_{Q'}$. Therefore, there exists an invertible matrix $S'$ such that $S'A'S'^T = {\rm diag}(I_{n-2}, a)$ where $a \in \{-1, 0 ,1\}$. Let $S = {\rm diag}(S',1)$. Then $SAS^T$ is a symmetric matrix whose upper left principal submatrix of size $(n-1) \times (n-1)$ is $S'A'S'^T = {\rm diag}(I_{n-2}, a)$. Now, it is easy to see that there exists an invertible matrix $T$ such that $TSAS^TT^T = {\rm diag}(I_{n-2},B)$ for some $2 \times 2$ matrix $B$. Now, there exists an invertible matrix $W$ such that $WTSAS^TT^TW^T = {\rm diag}(I_{n-2},c,d)$. Since ${\rm det}(A_Q) < 0$, we may assume that $c > 0$ and $d < 0$. By Sylvester's Law of Inertia, the number of negative (or positive) eigenvalues of $A$ is the number of negative (resp. positive) diagonal elements in ${\rm diag}(I_{n-2},c,d)$. Hence $A$ has exactly one negative eigenvalue and $n-1$ positive eigenvalues.
\end{proof}

The following proposition gives a wide family of quivers that are of weakly hyperbolic type.

\begin{Prop}
The following type of acyclic quivers are at most weakly hyperbolic.
\begin{enumerate}[$(a)$]
    \item A (connected) quiver of hyperbolic type: every proper subquiver of it is a union of quivers of Dynkin or Euclidean types.
\item A quiver with $3$ vertices.
\item A quiver $Q$ for which ${\rm det}(A_Q) < 0$ and there exists a full subquiver with $n-1$ vertices which is a union of quivers of Dynkin type and at most one quiver of Euclidean type.
\item A quiver for which ${\rm det}(A_Q) < 0$ and there exists a vertex $x$ whose arrows are all connected to a given vertex $y$, and the full subquiver generated by all the vertices but $x,y$ is a union of quivers of Dynkin type and at most one quiver of Euclidean type.
\end{enumerate}
\end{Prop}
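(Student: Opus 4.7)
The plan is to verify each of the four cases by reducing to Proposition~\ref{PropWeakly}, using Cauchy interlacing, Proposition~\ref{LemmaPlane}, and the classification of Dynkin and Euclidean diagrams as needed.

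Parts $(c)$ and $(d)$ are essentially formal. For $(c)$, the $(n-1)$-vertex subquiver $Q'$ has $A_{Q'}$ positive semi-definite with at most one zero eigenvalue (contributed by the possible Euclidean component), so $Q'$ is at most weakly hyperbolic, and Proposition~\ref{PropWeakly} combined with $\det(A_Q) < 0$ gives $Q$ weakly hyperbolic. For $(d)$, removing the vertex $y$ isolates $x$ (its arrows all met $y$), so $Q \setminus \{y\} = \{x\} \sqcup (Q \setminus \{x, y\})$ is still a Dynkin union with at most one Euclidean component; thus $(d)$ reduces directly to $(c)$.

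For $(b)$, any two-vertex subquiver is at most weakly hyperbolic since its Cartan matrix $\bigl(\begin{smallmatrix} 2 & -m \\ -m & 2 \end{smallmatrix}\bigr)$ has eigenvalues $2 \pm m$. If the three-vertex $Q$ is Dynkin or Euclidean the conclusion is immediate; otherwise $Q$ is wild. Suppose for contradiction that $A_Q$ has two non-positive eigenvalues $\lambda_1 \le \lambda_2 \le 0$. By Cauchy interlacing each $2 \times 2$ principal submatrix has smallest eigenvalue $\mu_1 \le \lambda_2 \le 0$, forcing at least $2$ arrows between every pair of vertices, so the three vertices form a triangle. The formula $\det(A_Q) = 8 - 2(a^2+b^2+c^2) - 2abc$ with $a,b,c \ge 2$ then gives $\det(A_Q) \le -32 < 0$, which is incompatible with the only signatures that could accommodate two non-positive eigenvalues in a wild three-vertex quiver: signature $(1,2,0)$ has $\det > 0$ and signature $(1,1,1)$ has $\det = 0$. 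Hence $\lambda_2 > 0$, and Proposition~\ref{PropWeakly} completes part~$(b)$.

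For $(a)$, I would first establish the following structural lemma: for every vertex $v \in Q_0$, the subquiver $Q \setminus \{v\}$ has at most one Euclidean component. Indeed, if it had two Euclidean components $E_1, E_2$, then connectedness of $Q$ forces $v$ to be adjacent to at least one, say $E_1$; the connected subquiver on $E_1 \cup \{v\}$ then properly extends the Euclidean $E_1$ and, by hyperbolicity, must be Dynkin or Euclidean — it cannot be Dynkin since it contains a Euclidean, nor Euclidean since no affine diagram properly contains another affine as a full subquiver. The lemma ensures that every $(n-1)$-vertex subquiver is at most weakly hyperbolic; Cauchy interlacing combined with the indefiniteness of $A_Q$ (since $Q$ is wild) then gives $\lambda_1(A_Q) < 0 \le \lambda_2(A_Q) \le \cdots \le \lambda_n(A_Q)$. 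It remains to show $\det(A_Q) < 0$, which uses Perron–Frobenius applied to the irreducible non-negative matrix $2I - A_Q$ together with Kac's trichotomy (a positive null vector of $A_Q$ would place $Q$ in the affine class, contradicting hyperbolicity) and, ultimately, the classification of hyperbolic Cartan matrices, which are Lorentzian of signature $(n-1,1,0)$. Proposition~\ref{PropWeakly} then concludes.

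The main obstacle lies in the final step of part~$(a)$: ruling out $\det(A_Q) = 0$ genuinely requires either Kac's classification of hyperbolic types or a sharper form of Perron–Frobenius handling null vectors of mixed sign. Everything else is a careful but essentially routine application of Cauchy interlacing, the determinant formula in three vertices, and the classification of Dynkin and Euclidean diagrams.
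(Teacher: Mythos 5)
Your treatment of $(c)$ and $(d)$ is essentially identical to the paper's: observe that $Q'\setminus\{y\}$ (hence $Q\setminus\{y\}$ for $(d)$, since removing $y$ isolates $x$) is positive semi-definite with at most one zero eigenvalue, then invoke Proposition~\ref{PropWeakly} together with $\det(A_Q)<0$.

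For $(b)$ you take a genuinely different, slightly heavier route. The paper first notes that $\det(A_Q)<0$ already forces the right signature because $A_Q$ cannot have three negative eigenvalues (positive diagonal), and then, in the case $\det(A_Q)\ge 0$, uses the explicit formula $\det(A_Q)=2\bigl(4-abc-a^2-b^2-c^2\bigr)$ to conclude directly that $Q$ must be Dynkin or of type $\widetilde{\mathbb A}$, with no interlacing needed. Your argument instead assumes two non-positive eigenvalues, applies Cauchy interlacing to force $a,b,c\ge 2$, derives $\det(A_Q)\le -32$ from the same formula, and then gets a contradiction from the signature count. Both are correct; the paper's case split on $\det$ is a bit cleaner, while yours highlights the interlacing viewpoint that also underlies your approach to $(a)$. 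One small omission: besides $(1,2,0)$ and $(1,1,1)$ you should also mention $(1,0,2)$, which likewise has $\det=0$.

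For $(a)$, the paper simply cites Kac \cite[Exercice~4.6, page~51]{KacBook}, which asserts that a connected hyperbolic Cartan matrix has Lorentzian signature $(n-1,1,0)$. Your approach tries to reconstruct this: the structural lemma (at most one Euclidean component in any vertex-deleted subquiver) is correct and, combined with interlacing, gives $\lambda_1(A_Q)<0\le\lambda_2(A_Q)\le\cdots\le\lambda_n(A_Q)$. But as you yourself point out, this still leaves the possibility $\lambda_2=0$, i.e.\ signature $(n-2,1,1)$, and excluding it is precisely the nontrivial content of Kac's exercise. So your argument for $(a)$ does not close the gap the paper outsources to Kac; it reduces to the same citation, with extra work along the way. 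It would be cleaner to simply cite the exercise as the paper does, or to complete the $\det(A_Q)<0$ step; a sketch of the latter: a null vector $v$ of $A_Q$ must have mixed sign (Perron--Frobenius puts the positive eigenvector with $\lambda_1<0$), write $v=v^+-v^-$ with disjoint positive supports, and use positive semi-definiteness on proper full subquivers together with connectedness of $Q$ to derive a contradiction. This is essentially the standard proof of Kac's exercise and is not ``routine Cauchy interlacing,'' so your honesty about the obstacle is warranted.
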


\begin{proof}
For the first statement, see \cite[Exercice 4.6, page 51]{KacBook}. Suppose that $Q$ is connected and has three vertices. If det$(A) < 0$, then we are done since $A=A_Q$ cannot have three negative eigenvalues. Therefore, suppose that det$(A) \ge 0$. Write
$$A=\left(%
\begin{array}{ccc}
  2 & -a & -b \\
  -a & 2 & -c \\
  -b & -c & 2 \\
\end{array}%
\right)$$
where $a,b,c$ are non-negative integers.  We have det$(A) = 2(4 - abc - a^2-b^2-c^2)$.  Since $Q$ is connected, notice that det$(A)=0$ if and only if $Q$ is of type $\widetilde{\mathbb{A}}$. So suppose det$(A)>0$. Then $0 \le a,b,c \le 1$ and at least one of them is zero. This implies that $Q$ is of Dynkin type.

For the remaining statements, observe that a quiver which is a union of quivers of Dynkin type and at most one quiver of Euclidean type is at most weakly hyperbolic. In statement (d), the full subquiver generated by all vertices but $y$ is of weakly hyperbolic type. Hence, these statements follow from Proposition \ref{PropWeakly}.
\end{proof}

\begin{Exam}
(1) Here is an example of a quiver with $4$ vertices which is not at most weakly hyperbolic.  The matrix $A_Q$ has two positive and two negative eigenvalues. However, every full subquiver is at most weakly hyperbolic.
$$\xymatrix{\circ \ar@/^/[r] \ar@/_/[r] \ar[r] & \circ \ar[r] & \circ \ar@/^/[r] \ar@/_/[r] \ar[r] & \circ}$$
(2) Here is an example such as part (d) of the above proposition. Hence, this quiver is weakly hyperbolic.
$$\xymatrix{\circ \ar[r] & \circ \ar[r] & \circ \ar@/_/[r] \ar@/^/[r] & \circ\\ & \circ \ar[u] \ar[r] & \circ \ar@/_/[u] \ar@/^/[u] & }$$
\end{Exam}

\section{The canonical decomposition and Schur sequences}

In this section, $Q$ is a connected acyclic quiver whose vertices are denoted $1,2,\ldots,n$ and we may assume that $i> j$ whenever there is an arrow $i \to j$. Due to results of Kac \cite{Kac}, if $d$ is a dimension vector, then there is a decomposition, denoted $d=\alpha_1 \oplus \cdots \oplus \alpha_m$, having the property that there exists an open dense subset $\mathcal{U}_d$ of $\rep(Q,d)$ such that for $M \in \mathcal{U}_d$, we have $M \cong M_1 \oplus \cdots \oplus M_m$, where each $M_{i}$ is a Schur representation with $d_{M_{i}} = \alpha_i$. Moreover, $\Ext^1(M_i,M_j)=0$ when $i \ne j$. The latter decomposition of $d$ is unique up to ordering, and is called the \emph{canonical decomposition} of $d$. The dimension vectors $\alpha_i$ are clearly Schur roots, however, they do not need be distinct. Sometimes, it is more convenient to write the above decomposition as
$$(*) \quad d=p_1\beta_1 \oplus \cdots \oplus p_r\beta_r,$$
where the $\beta_i$ are pairwise distinct and $p_i$ is the number of $1 \le j \le m$ with $\beta_i = \alpha_j$. It follows from \cite[Theorem 3.8]{SchofieldGeneral} that when $\beta_i$ is strictly imaginary, then $p_i=1$. When writing a canonical decomposition as in $(*)$, we adopt the convention that when $\alpha$ is a strictly imaginary Schur root and $p$ is a positive integer, then $p\alpha$ is just one root (not $p$ times the root $\alpha$ as when $\alpha$ is real or isotropic). With this convention, Schofield has proven in \cite[Theorem 3.8]{SchofieldGeneral} the following result.

\begin{Prop}[Schofield]\label{CanDecompSchofield} Let $d=p_1\beta_1 \oplus \cdots \oplus p_r\beta_r$ be the canonical decomposition of $d$. If $p$ is a positive integer, then $pd=pp_1\beta_1 \oplus \cdots \oplus pp_r\beta_r$ is the canonical decomposition of $pd$, using the above convention for strictly imaginary Schur roots.
\end{Prop}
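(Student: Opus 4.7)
The plan is to apply Schofield's criterion for canonical decompositions: a formal sum $\gamma_1 \oplus \cdots \oplus \gamma_s$ of Schur roots equals the canonical decomposition of its dimension sum if and only if $\ext(\gamma_i,\gamma_j)=0$ for every $i\ne j$. I would therefore enumerate the proposed summands of $pd$ and verify, in order, that each is a Schur root and that $\ext$ vanishes between distinct ones.

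For the enumeration, when $\beta_i$ is real or isotropic, its multiplicity $p_i$ in $d$ becomes $pp_i$ in $pd$; when $\beta_i$ is strictly imaginary (so $p_i=1$), the symbol $pp_i\beta_i=p\beta_i$ denotes a single root. Schur-ness is automatic in the first two cases. For the strictly imaginary case I would invoke the standard Kac/Schofield fact that positive integer multiples of a strictly imaginary Schur root remain Schur; this rests on the observations that $\beta_i$ is Weyl-conjugate to a vector in the fundamental region $W$, that $W$ is closed under positive scaling, and that indecomposability together with triviality of the endomorphism ring of the generic representation are open and non-empty conditions on $\rep(Q,p\beta_i)$.

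For the $\ext$-vanishing condition, fix for each pair $i\ne j$ modules $M_i,M_j$ with $\dim M_\ell=\beta_\ell$ and $\Ext^1(M_i,M_j)=0$ realizing $\ext(\beta_i,\beta_j)=0$. Then $\Ext^1(M_i^{\oplus s},M_j^{\oplus t})=0$ for all positive integers $s,t$, which bounds the generic value $\ext(s\beta_i,t\beta_j)$ from above by zero; this covers every pair of summands coming from different $\beta_i$, including pairs involving a scaled strictly imaginary summand $p\beta_i$. For two copies of the same $\beta_i$ in $pd$, the root $\beta_i$ must be real or isotropic since strictly imaginary summands appear only once, and in those cases the condition $\ext(\beta_i,\beta_i)=0$ is already part of the data of the canonical decomposition of $d$ (where $p_i\ge 2$ was allowed).

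The main obstacle is cleanly handling the strictly imaginary case: one must invoke rather than re-prove the fact that $p\beta_i$ is itself a Schur root, and then reconcile the outputs of Schofield's criterion with the notational convention set up in the paper. Once these inputs are in hand, Schofield's criterion delivers the canonical decomposition of $pd$ immediately.
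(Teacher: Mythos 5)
The paper does not prove this proposition; it is cited directly from Schofield \cite{SchofieldGeneral} (Theorem~3.8), so there is no internal proof to compare with, and I will only assess your argument. The plan of verifying Kac's criterion on the proposed summands of $pd$ is the natural one, and your handling of the cross-terms $\ext(s\beta_i,t\beta_j)$ for $i\ne j$ via additivity of $\Ext^1$ is correct. But there is a genuine gap at the heart of the strictly imaginary case: you need the fact that $p\beta_i$ is again a Schur root, and the sketch you give does not prove it. Passing to a Weyl conjugate $\gamma$ of $\beta_i$ lying in $W$ and scaling it shows only that $p\beta_i$ is a positive imaginary root; being an imaginary root is far weaker than being a Schur root, and Schur-ness is not transported along an arbitrary Weyl conjugation in any evident way (reflection functors change the orientation of the quiver, while $W$ is defined purely in terms of the symmetrized form). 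Likewise, asserting that the Schur condition is ``open and non-empty'' on $\rep(Q,p\beta_i)$ begs the question: openness is trivial, but non-emptiness of the Schur locus in dimension $p\beta_i$ is precisely what has to be shown. This statement is essentially the substance of Schofield's Theorem~3.8 itself (or of Kac's parametrization of imaginary Schur roots); invoking it as a black box is legitimate, but the justification you offer in its place is not a proof of it, so as written the argument is circular at exactly the crucial point.

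There is also a smaller slip in the self-ext verification. When a real or isotropic $\beta_i$ appears with multiplicity $pp_i\ge 2$ in $pd$, you say $\ext(\beta_i,\beta_i)=0$ is ``already part of the data'' of the canonical decomposition of $d$. That only holds when $p_i\ge 2$. If $p_i=1$, the original decomposition gives no information about $\ext(\beta_i,\beta_i)$, and you need to argue separately that $\ext(\beta,\beta)=0$ for every real or isotropic Schur root $\beta$. This is true (a real Schur root has a unique representation, which is exceptional; for an isotropic Schur root two generic Schur representations are non-isomorphic and are Hom- and Ext-orthogonal, e.g.\ by the Auslander-Reiten formula in the associated tame rank-two subcategory), but the step is missing from your write-up and should be added.
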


There is another equivalent way to describe the canonical decomposition of a dimension vector. For two dimension vectors $d_1,d_2$, let $\ext(d_1,d_2)$ denote the minimal value of ${\rm dim}_k \Ext^1(M_1,M_2)$ where $(M_1,M_2) \in \rep(Q,d_1) \times \rep(Q,d_2)$. Similarly, let $\hom(d_1,d_2)$ denote the minimal value of ${\rm dim}_k \Hom(M_1,M_2)$ where $(M_1,M_2) \in \rep(Q,d_1) \times \rep(Q,d_2)$. There are open dense subsets $\mathcal{U}_1 \subseteq \rep(Q,d_1)$, $\mathcal{U}_2 \subseteq \rep(Q,d_2)$ such that for $(M_1,M_2) \in \mathcal{U}_1 \times \mathcal{U}_2$, we have $$\hom(d_1,d_2) = {\rm dim}_k\Hom(M_1,M_2) \;\; \text{and} \;\;\ext(d_1,d_2) = {\rm dim}_k\Ext^1(M_1,M_2).$$ In particular, for $(M_1,M_2) \in \mathcal{U}_1 \times \mathcal{U}_2$, we have $$\langle d_{M_1}, d_{M_2} \rangle = \langle d_1, d_2 \rangle = \hom(d_1,d_2) - \ext(d_1,d_2).$$ The following is due to Kac; see \cite{Kac}.

\begin{Prop}[Kac] Let $d = d_1 + \cdots + d_m$ be a decomposition of a dimension vector $d$ as a sum of dimension vectors. This is the canonical decomposition of $d$ if and only if each $d_i$ is a Schur root and $\ext(d_i,d_j)=0$ for $i \ne j$.
\end{Prop}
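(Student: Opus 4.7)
The two directions have quite different difficulties. For $(\Rightarrow)$, the claim is essentially a translation of the definition of the canonical decomposition. Given the canonical decomposition $d = d_1 \oplus \cdots \oplus d_m$, there is by definition an open dense subset $\mathcal{U}_d \subseteq \rep(Q,d)$ on which $M \cong M_1 \oplus \cdots \oplus M_m$ with each $M_i$ a Schur representation of dimension vector $d_i$ and $\Ext^1(M_i,M_j) = 0$ for $i \ne j$. Projecting to each factor $\rep(Q, d_i)$ exhibits Schur representations of that dimension vector, so each $d_i$ is a Schur root; and the minimum of $\dim_k \Ext^1(N_i, N_j)$ on $\rep(Q, d_i) \times \rep(Q, d_j)$ is witnessed to be $0$ by the pairs $(M_i, M_j)$ arising from $\mathcal{U}_d$, giving $\ext(d_i, d_j) = 0$.

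For $(\Leftarrow)$, the strategy is geometric: I would study the morphism
$$\psi : {\rm GL}_d(k) \times \rep(Q, d_1) \times \cdots \times \rep(Q, d_m) \longrightarrow \rep(Q, d),$$
defined by $(g, M_1, \ldots, M_m) \mapsto g \cdot (M_1 \oplus \cdots \oplus M_m)$, where the direct sum is embedded as a block-diagonal representation in $\rep(Q, d)$. By Chevalley's theorem its image is constructible, and the plan is to show that under the hypotheses this image is dense in $\rep(Q, d)$. To this end, fix a generic point $M = M_1 \oplus \cdots \oplus M_m$ in the image; since Schur-ness is open in each $\rep(Q, d_i)$ and Ext-vanishing is open and realized by $\ext(d_i, d_j) = 0$, I may assume every $M_i$ is Schur and $\Ext^1(M_i, M_j) = 0$ for $i \ne j$. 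A fiber-dimension calculation at such $M$, using Krull--Schmidt to identify the isomorphism class of $M$ and the fact that Schur representations satisfy $\End(M_i) = k$, together with the standard identities $\dim \rep(Q, e) - \dim {\rm GL}_e(k) = -\langle e,e \rangle$ (for $e = d$ and $e = d_i$), $\dim \End(M) = m + \sum_{i \ne j} \dim \Hom(M_i, M_j)$, and $\langle d, d \rangle = \sum_{i,j} \langle d_i, d_j \rangle$, collapses to the clean identity
$$\dim \rep(Q, d) - \dim \overline{\psi\bigl({\rm GL}_d(k) \times \textstyle\prod_i \rep(Q, d_i)\bigr)} = \sum_{i \ne j} \ext(d_i, d_j).$$
When the right-hand side vanishes, the image of $\psi$ is dense in $\rep(Q, d)$; intersecting with the open Schur and Ext-vanishing loci then yields an open dense subset of $\rep(Q, d)$ whose points realize $d = d_1 + \cdots + d_m$ as a decomposition with exactly the defining properties of the canonical decomposition. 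Uniqueness of the canonical decomposition, via Krull--Schmidt applied to the generic $M$, then forces this to be the canonical one.

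The main obstacle is the fiber-dimension computation, in particular the bookkeeping when several $d_i$ coincide (so Krull--Schmidt permits finite permutations of isomorphic summands) and the special convention for strictly imaginary Schur roots, where a summand of the form $p\alpha$ counts as a single Schur representation rather than $p$ copies of $\alpha$. Both issues contribute only a finite ambiguity and do not perturb the generic dimension, but the counting must respect the convention. Once the fiber dimension is correctly computed, the remainder is a routine combination of constructibility, openness of the Schur and Ext-vanishing loci, and the standard Euler-form identities.
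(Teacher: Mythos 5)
The paper does not actually prove this Proposition; it is stated as a known result and attributed to Kac (see the reference \cite{Kac}), so there is no in-paper argument to compare against. Your proposed argument is the standard geometric dimension-count proof (in the spirit of Kac and of Kraft--Riedtmann/Schofield), and its skeleton is sound. The forward direction is correct and essentially definitional, with the minor remark that one exceptional Schur representation of dimension vector $d_i$ already implies $d_i$ is a Schur root because the Schur locus in $\rep(Q,d_i)$ is open and $\rep(Q,d_i)$ is irreducible. For the reverse direction, the fiber computation you gesture at does close: at a generic block-diagonal point $M=\oplus_i M_i$ with $\End(M_i)=k$, the fiber of $\psi$ over $\psi(g_0,M)$ has dimension $\dim\End(M)+\sum_i\dim{\rm GL}_{d_i}(k)-m$ (Krull--Schmidt contributes only a finite permutation ambiguity), and combining $\dim\End(M)=m+\sum_{i\ne j}\hm(d_i,d_j)$ with $\dim\rep(Q,e)-\dim{\rm GL}_e(k)=-\langle e,e\rangle$ and $\langle d,d\rangle=\sum_{i,j}\langle d_i,d_j\rangle$ gives exactly your identity $\dim\rep(Q,d)-\dim\overline{\operatorname{im}\psi}=\sum_{i\ne j}\ext(d_i,d_j)$, valid once the $d_i$ are Schur. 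One small presentational wrinkle: you should be explicit that you pass to the image of ${\rm GL}_d(k)$ times the open dense ``good'' locus of $\prod_i\rep(Q,d_i)$ (Schur plus Ext-vanishing), since the Schur/Ext-vanishing loci live in the source, not the target; this image is still a dense constructible subset of $\rep(Q,d)$, hence contains an open dense set, which then serves as the defining $\mathcal{U}_d$ and forces equality of multisets of summands with the canonical decomposition by Krull--Schmidt. The convention for strictly imaginary roots is a red herring here: the statement uses a plain sum $d=d_1+\cdots+d_m$, and a strictly imaginary Schur root $\alpha$ cannot occur twice because $\ext(\alpha,\alpha)>0$, so no special bookkeeping is needed beyond what you describe.
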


It is possible to find the canonical decomposition of a dimension vector in practice. Schofield, in \cite{SchofieldGeneral}, gave an inductive way to find this canonical decomposition. Later, in \cite{DWAlgo}, Derksen and Weyman gave an elementary algorithm to find the canonical decomposition. The validity of their algorithm gave a new structural property of the canonical decomposition of a dimension vector. Given two dimension vectors $d_1,d_2$, we say that $d_1$ is \emph{left orthogonal} to $d_2$, written $d_1 \perp d_2$, if $\hom(d_1,d_2) = 0 = \ext(d_1,d_2)$. In an expression as in $(*)$, Derksen and Weyman have shown that it is possible to order the $\beta_i$ in such a way that for $i < j$, $\beta_i \perp \beta_j$.

\medskip

In this section, we will use Derksen-Weyman's algorithm to give other property of canonical decompositions. We first need to introduce more terminology on sequences of Schur roots.
A \emph{weak Schur sequence} $(\alpha_1, \ldots, \alpha_r)$ is any sequence of Schur roots where $\alpha_i \perp \alpha_j$ whenever $i < j$. Now, if $\alpha, \beta$ are Schur roots with $\alpha \perp \beta$, then it is well known that the weight space SI$(Q,\beta)_{\langle \alpha, - \rangle}$ of the ring of semi-invariants SI$(Q,\beta)$ is nonzero. Following \cite{DWSchurSeqn}, we write $\alpha \circ \beta$ for the dimension over $k$ of SI$(Q,\beta)_{\langle \alpha, - \rangle}$. Hence, $\alpha \perp \beta$ implies $\alpha \circ \beta > 0$. We say that $\alpha$ is \emph{strongly left orthogonal} to $\beta$, written $\alpha \sperp \beta$, if $\alpha \circ \beta =  1$. A weak Schur sequence $(\alpha_1, \alpha_2, \ldots, \alpha_r)$ such that $\alpha_i \sperp \alpha_j$ whenever $i < j$ is called a \emph{Schur sequence}. Not all weak Schur sequences are Schur sequences. For instance, if $\delta$ is an isotropic Schur root, then $(\delta,\delta)$ is a weak Schur sequence but is not a Schur sequence. The following lemmas appear in \cite{DWSchurSeqn}.

\begin{Lemma}[Derksen-Weyman] \label{ConstructionSchurSequences}Let $\mathcal{S} = (\alpha_1, \alpha_2, \ldots, \alpha_r)$ be a weak Schur sequence where all $\alpha_i$ are real Schur roots.
Then $\mathcal{S}$ is a Schur sequence.
\end{Lemma}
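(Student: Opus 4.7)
Both the weak-Schur and Schur conditions are imposed pairwise on indices $i<j$, so it suffices to treat the case $r=2$: if $\alpha,\beta$ are real Schur roots with $\alpha\perp\beta$, then $\alpha\circ\beta=1$. The rest of the plan focuses on this pairwise statement.

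For the lower bound $\alpha\circ\beta\ge 1$, I would exhibit Schofield's determinantal semi-invariant. Let $M$ be the (unique, up to isomorphism) exceptional representation with $d_M=\alpha$, and fix a minimal projective resolution $0\to P_1\to P_0\to M\to 0$. Since $\alpha\perp\beta$ gives $\langle\alpha,\beta\rangle=\hom(\alpha,\beta)-\ext(\alpha,\beta)=0$, the induced map $\Hom(P_0,N)\to\Hom(P_1,N)$ is a square matrix for every $N$ of dimension $\beta$, and its determinant $c^M(N)$ defines a semi-invariant $c^M\in\mathrm{SI}(Q,\beta)_{\langle\alpha,-\rangle}$. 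For a generic $N$, the vanishings $\Hom(M,N)=0=\Ext^1(M,N)$ make this map an isomorphism, so $c^M\ne 0$ and the weight space is nonzero.

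For the upper bound, the key observation is that $\beta$ is itself a \emph{real} Schur root. Let $N_0$ denote the corresponding exceptional representation. Its orbit under $\mathrm{GL}(\beta)$ has dimension $\dim\mathrm{GL}(\beta)-\dim\End(N_0)=\dim\mathrm{GL}(\beta)-1$, while the codimension of this orbit in $\rep(Q,\beta)$ equals $\dim\Ext^1(N_0,N_0)=0$. Since $\rep(Q,\beta)$ is irreducible, the orbit of $N_0$ is open and dense. Now for any $f\in\mathrm{SI}(Q,\beta)_{\langle\alpha,-\rangle}$, the ratio $f/c^M$ is a $\mathrm{GL}(\beta)$-invariant rational function, since numerator and denominator carry the same weight; being constant on every orbit, and in particular on the dense one, it must be constant. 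Hence $f\in k\cdot c^M$ and $\alpha\circ\beta=1$.

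The main obstacle, as I see it, is the passage from ``constant on the dense orbit'' to ``scalar multiple of $c^M$ as polynomials''; this is a standard GIT manipulation but must be carried out carefully in light of the zero locus of $c^M$ (where both $f$ and $c^M$ vanish and the ratio must be interpreted in the function field). An alternative that avoids the rational-function argument entirely is to invoke Schofield's theorem on perpendicular categories: $M^\perp\simeq\rep(Q_M)$ for an acyclic quiver $Q_M$ on $n-1$ vertices, under which the character $\langle\alpha,-\rangle$ restricts to the trivial character (because $\langle\alpha,\gamma\rangle=0$ for every dimension vector $\gamma$ of $M^\perp$), so $\mathrm{SI}(Q,\beta)_{\langle\alpha,-\rangle}$ is identified with the space of $\mathrm{GL}$-invariants on $\rep(Q_M,\beta')$. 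This latter space reduces to $k$ because $Q_M$ is acyclic and hence has no oriented cycles to produce non-constant traces.
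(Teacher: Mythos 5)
The paper does not prove this lemma; it is stated as a quoted result of Derksen--Weyman and cited to \cite{DWSchurSeqn}, so there is no in-paper argument to compare against. Your blind proof is nonetheless correct and essentially self-contained, and it is worth recording how it fits against the usual route. The reduction to the pairwise case $r=2$ is valid, since both the weak-Schur and Schur conditions are purely pairwise on $i<j$. The lower bound via Schofield's determinantal semi-invariant $c^M$ is standard and uses exactly the fact that $\alpha\perp\beta$ forces $\langle\alpha,\beta\rangle=0$ (square matrix) and makes the induced map generically an isomorphism. For the upper bound, your dense-orbit computation is right: $\beta$ being a real Schur root gives $\dim\Ext^1(N_0,N_0)=0$, so the $\mathrm{GL}(\beta)$-orbit of the exceptional $N_0$ is open and dense. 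The ``gap'' you flag at the end is in fact easy to close without passing to rational functions at all: for any $f$ in the weight space, the semi-invariant $f(N_0)\,c^M - c^M(N_0)\,f$ has the same weight and vanishes at $N_0$, hence vanishes on the whole orbit (the semi-invariance equation $g(h\cdot N_0)=\chi(h)g(N_0)$ forces it), hence vanishes identically by density; since $c^M(N_0)\neq 0$, this exhibits $f$ as a scalar multiple of $c^M$. So no gap remains. Your alternative via perpendicular categories --- identifying $\mathrm{SI}(Q,\beta)_{\langle\alpha,-\rangle}$ with ordinary invariants on $\rep(Q_M,\beta')$ and using that an acyclic quiver has only constant invariants --- is closer in spirit to what Derksen and Weyman actually do, but it leans on the nontrivial semi-invariant restriction theorem, whereas your primary argument only needs the openness of the exceptional orbit and is arguably the more elementary of the two.
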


\begin{Lemma}[Derksen-Weyman] \label{ConstructionSchurSequences2}Let $\mathcal{S} = (\alpha_1, \alpha_2, \ldots, \alpha_r)$ be a weak Schur sequence such that $(\alpha_2, \ldots, \alpha_r)$ is a Schur sequence. If $\alpha_1$ is real, then $S$ is a Schur sequence.
\end{Lemma}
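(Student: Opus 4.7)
The plan is to prove, for each $j \in \{2, \ldots, r\}$, that $\alpha_1 \sperp \alpha_j$; together with the hypothesis that $(\alpha_2, \ldots, \alpha_r)$ is already a Schur sequence, this will upgrade $\mathcal{S}$ from a weak Schur sequence to a Schur sequence. Since $\alpha_1 \perp \alpha_j$ by the weak Schur hypothesis, the comment immediately following the definition of $\circ$ gives $\alpha_1 \circ \alpha_j \geq 1$, so everything reduces to establishing the reverse inequality $\alpha_1 \circ \alpha_j \leq 1$.

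The crucial input is that $\alpha_1$ is a real Schur root. Then there is a unique (up to isomorphism) exceptional representation $E$ with $d_E = \alpha_1$, and its ${\rm GL}_{\alpha_1}(k)$-orbit in $\rep(Q, \alpha_1)$ is dense: the dense open set of indecomposables in $\rep(Q, \alpha_1)$ guaranteed by the Schur-root criterion collapses onto this single orbit by uniqueness of $E$. For any character $\chi$ of ${\rm GL}_{\alpha_1}(k)$ and any $f \in {\rm SI}(Q, \alpha_1)_\chi$, the identity $f(g \cdot E) = \chi(g) f(E)$ determines $f$ on the orbit of $E$ from the single scalar $f(E)$, hence on all of $\rep(Q, \alpha_1)$ by polynomiality. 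Consequently
\[
\dim_k {\rm SI}(Q, \alpha_1)_\chi \;\leq\; 1
\]
for every character $\chi$ of ${\rm GL}_{\alpha_1}(k)$.

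To transfer this bound from $\rep(Q, \alpha_1)$ to $\rep(Q, \alpha_j)$, I would invoke the Derksen-Weyman reciprocity theorem for semi-invariants of quivers, which asserts
\[
\dim_k {\rm SI}(Q, \alpha_j)_{\langle \alpha_1, -\rangle} \;=\; \dim_k {\rm SI}(Q, \alpha_1)_{-\langle -, \alpha_j\rangle}.
\]
The right-hand side is at most $1$ by the previous paragraph, so $\alpha_1 \circ \alpha_j \leq 1$, which combined with the lower bound yields $\alpha_1 \circ \alpha_j = 1$, i.e., $\alpha_1 \sperp \alpha_j$. Letting $j$ range over $\{2, \ldots, r\}$ completes the argument. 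The principal obstacle is that this appeal to reciprocity is external to the present paper; a self-contained alternative is to pass to the right perpendicular category $E^\perp \simeq \rep(Q')$ (Schofield/Geigle-Lenzing) on $n-1$ vertices and induct on $n$, using $\alpha_1 \perp \alpha_j$ to ensure the generic $\alpha_j$-dimensional representation actually lies in $E^\perp$ so that the relevant weight space descends to a weight space for the smaller quiver.
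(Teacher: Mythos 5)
Your reduction is exactly right: since $(\alpha_2, \ldots, \alpha_r)$ is already Schur, the only thing to verify is $\alpha_1 \circ \alpha_j = 1$ for $j \geq 2$, and the lower bound is automatic from $\alpha_1 \perp \alpha_j$. Note that the paper itself does not prove this lemma but cites it to Derksen--Weyman, so the comparison is with their original argument. Your proof is correct and in the same spirit: the key observation is that every weight space of ${\rm SI}(Q,\alpha_1)$ is at most one-dimensional because the exceptional $E$ with $d_E = \alpha_1$ has a dense ${\rm GL}_{\alpha_1}(k)$-orbit, and you finish by invoking semi-invariant reciprocity to transport that bound to $\dim {\rm SI}(Q,\alpha_j)_{\langle \alpha_1,-\rangle}$. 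Reciprocity is a genuine theorem and the argument closes, but it is avoidable: the Derksen--Weyman/Schofield--Van den Bergh spanning theorem says that ${\rm SI}(Q,\alpha_j)_{\langle \alpha_1,-\rangle}$ is already spanned by the determinantal semi-invariants $c^V$ with $V \in \rep(Q,\alpha_1)$; since $V \mapsto c^V$ is a polynomial map, its linear span is unchanged upon restricting $V$ to the dense orbit of $E$, where $c^{g\cdot E}$ is a nonzero scalar multiple of $c^E$ (and $c^E \neq 0$ precisely because $\alpha_1 \perp \alpha_j$). This yields $\alpha_1 \circ \alpha_j = 1$ directly on the target space, with no transfer needed. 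The $E^\perp$-induction you sketch at the end is plausible but not carried out, and the step of descending the weight $\langle \alpha_1, -\rangle$ to the quiver of $E^\perp$ is the nontrivial point there, since $\alpha_1$ is not a dimension vector of $E^\perp$; your main argument does not depend on it.
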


\begin{Lemma}[Derksen-Weyman] \label{ConstructionSchurSequences3}Let $\mathcal{S} = (\alpha_1, \alpha_2, \ldots, \alpha_r)$ be a Schur sequence. Then $\alpha_1, \alpha_2, \ldots, \alpha_r$ are linearly independent.
\end{Lemma}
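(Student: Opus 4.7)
My plan is induction on the length $r$, with the trivial base case $r=1$ handled by observing that every Schur root is nonzero. For the inductive step I exploit the strictly triangular behavior of the Euler-Ringel form on a weak Schur sequence: because $\alpha_i \perp \alpha_j$ for $i<j$ forces $\hom(\alpha_i,\alpha_j) = \ext(\alpha_i,\alpha_j) = 0$, we get $\langle \alpha_i, \alpha_j\rangle = 0$ whenever $i<j$. Given a putative relation $\sum_{i=1}^r c_i\alpha_i = 0$, pairing on the right with $\alpha_r$ yields $c_r\, q(\alpha_r) = 0$, and pairing on the left with $\alpha_1$ yields $c_1\, q(\alpha_1) = 0$. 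Whenever $\alpha_r$ or $\alpha_1$ is real or strictly imaginary the corresponding value of $q$ is nonzero, so the extremal coefficient vanishes and the inductive hypothesis disposes of the shorter subsequence; peeling from both ends reduces to the case where every $\alpha_i$ appearing with a nonzero coefficient is isotropic.

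To treat this residual isotropic case the plan is to pass to the right perpendicular category $\alpha_1^\perp$. By Schofield's perpendicular theorem, extended to isotropic Schur roots by Crawley-Boevey, $\alpha_1^\perp$ is equivalent to $\rep(Q')$ for an acyclic quiver $Q'$ with $n-1$ vertices, and the inclusion $K_0(Q') \hookrightarrow K_0(Q)$ is the hyperplane $\ker \langle \alpha_1, - \rangle$. Under this equivalence the tail $(\alpha_2,\ldots,\alpha_r)$ becomes a Schur sequence in $Q'$: left orthogonality is preserved because it is intrinsic to $\Hom$ and $\Ext$, and strong orthogonality is preserved because the equivalence identifies the weight spaces of semi-invariants that define $\alpha \circ \beta$. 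A double induction on the pair (number of vertices, length of the sequence) then yields linear independence of $\alpha_2,\ldots,\alpha_r$ in $K_0(Q')$, hence in $K_0(Q)$.

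The main obstacle is the last step: showing $\alpha_1 \notin \mathrm{span}_\mathbb{Q}(\alpha_2,\ldots,\alpha_r)$ when $\alpha_1$ itself is isotropic. Pairing any such relation with $\alpha_1$ on the left merely gives $q(\alpha_1) = 0$, which is automatic in the isotropic case, so the bilinear form alone is insufficient. The necessary extra input is the strong orthogonality $\alpha_1 \sperp \alpha_j$ for $j \ge 2$. The plan is to use the fact that a Schur module of dimension $\alpha_1$ never lies in its own perpendicular category (its endomorphism algebra being $k$), combined with the rigidity provided by $\alpha_1 \circ \alpha_j = 1$, to obstruct $\alpha_1$ from being a rational combination of simples of $Q'$. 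This isotropic endgame is precisely what separates the present lemma from the analogous statement for weak Schur sequences, which is false, as the pair $(\delta,\delta)$ with $\delta$ isotropic Schur already illustrates.
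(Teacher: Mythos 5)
The paper does not give its own proof of this lemma; it is cited verbatim from Derksen--Weyman \cite{DWSchurSeqn}. So the only thing to assess is whether your argument is correct on its own terms.

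Your opening step (the triangularity $\langle \alpha_i, \alpha_j\rangle = 0$ for $i<j$, and the resulting equations $c_r\,q(\alpha_r) = 0$ and $c_1\,q(\alpha_1)=0$) is correct, and the reduction it achieves is real: by induction on $r$, in a minimal counterexample all coefficients are nonzero, so the pairing forces $q(\alpha_1)=q(\alpha_r)=0$, i.e., $\alpha_1$ and $\alpha_r$ are isotropic. But your stated conclusion that ``every $\alpha_i$ appearing with a nonzero coefficient is isotropic'' does not follow: pairing the relation with a \emph{middle} $\alpha_j$ picks up the uncontrolled terms $\langle\alpha_i,\alpha_j\rangle$ for $i>j$, so real or strictly imaginary roots can survive in the interior of the sequence. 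This particular overclaim is not fatal, because your next step only uses that $\alpha_1$ is isotropic, but the exposition should not assert the stronger statement.

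The more serious problems are in the perpendicular-category step. First, the rank count is wrong: for an \emph{isotropic} Schur root $\alpha_1$, the generic right perpendicular $\alpha_1^\perp$ is equivalent to $\rep(Q')$ with $|Q'_0| = n-2$, not $n-1$. (Check the Kronecker quiver: for $\alpha_1 = (1,1)$ the generic perpendicular is zero, which is $n-2 = 0$ vertices.) Correspondingly, $K_0(\alpha_1^\perp)$ is a \emph{proper} rank-$(n-2)$ subgroup of $\ker\langle\alpha_1,-\rangle$, not all of it. Your version, with $K_0(Q')$ equal to the full hyperplane $\ker\langle\alpha_1,-\rangle$, would in fact make the argument collapse: since $q(\alpha_1)=0$ one has $\alpha_1\in\ker\langle\alpha_1,-\rangle$, so $\alpha_1$ would lie in $K_0(Q')$ and you could not rule out $\alpha_1\in\operatorname{span}(\alpha_2,\ldots,\alpha_r)$. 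The correct rank $n-2$ makes the desired conclusion at least plausible, but you then genuinely have to prove $\alpha_1\notin K_0(\alpha_1^\perp)\otimes\mathbb{Q}$, and you acknowledge that you have not done so. The sketched obstruction --- ``a Schur module never lies in its own perpendicular category, combined with rigidity from $\alpha_1\circ\alpha_j=1$'' --- is an intention, not a proof; the first observation concerns objects, not Grothendieck classes, and does not by itself obstruct $\alpha_1$ from being a $\mathbb{Q}$-linear combination of classes in $\alpha_1^\perp$. A genuine argument here would, for example, realize $\alpha_1$ as the null root of a rank-two tame subcategory $\mathcal{T}$ with simple roots $\sigma_1,\sigma_2$, note $\alpha_1^\perp = \mathcal{T}^\perp$, and compute $\langle\sigma_1,\alpha_1\rangle = q(\sigma_1) = 1\ne 0$ to see that $\alpha_1\notin K_0(\mathcal{T}^\perp)$. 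As written, the isotropic endgame --- which you correctly identify as the whole point, given that $(\delta,\delta)$ kills the weak-Schur-sequence version --- remains a gap.
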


Given a dimension vector $d$ and a Schur sequence $\mathcal{S}=(\alpha_1, \ldots, \alpha_r)$, we say that $\mathcal{S}$ is a Schur sequence \emph{for} $d$ if $d$ can be written as a non-negative linear combination of the roots in $\mathcal{S}$. In this case, we can write $d = p_1 \alpha_1 + \cdots + p_r\alpha_r$ and we define $c(\mathcal{S}, d) := (p_1, p_2, \ldots, p_r)\in (\mathbb{Z}_{\ge 0})^r$. Sometimes, we will be interested in the case where all $p_i$ are positive, that is, when $c(\mathcal{S}, d)$ is strictly positive.

\medskip

If we are given a dimension vector $d = (d_1, \ldots, d_n)$, then it follows from our hypothesis on the labeling of $Q_0$ that
$\mathcal{S}=(S_1, S_2, \ldots, S_n)$ is a weak Schur sequence for $d$ with $c(\mathcal{S},d)=(d_1, d_2, \ldots, d_n)$. It is also a Schur sequence because all simple roots are real. The latter is called the \emph{trivial Schur sequence} for $d$. Of course, for a given dimension vector, there are many possible Schur sequences for it.
A Schur sequence $(\alpha_1, \ldots, \alpha_r)$ such that $\langle \alpha_j, \alpha_i \rangle \ge 0$ whenever $i < j$ is said to be \emph{final}. Note that as a consequence of Derksen-Weyman's algorithm, one gets the following result.

\begin{Prop}[Derksen-Weyman] If $\mathcal{S}=(\alpha_1, \alpha_2, \ldots, \alpha_r)$ is a final Schur sequence for $d$ with $c(\mathcal{S},d)=(p_1, p_2, \ldots, p_r)$ strictly positive, then
$$p_1\alpha_1 \oplus \cdots \oplus p_r\alpha_r$$
is the canonical decomposition of $d$. Conversely, given the canonical decomposition $$q_1\beta_1 \oplus \cdots \oplus q_s\beta_s$$ of $d$ with the $\beta_i$ pairwise distinct, we may order the $\beta_i$ in such a way that $\mathcal{T}=(\beta_1, \beta_2, \ldots, \beta_s)$ is a final Schur sequence for $d$.
\end{Prop}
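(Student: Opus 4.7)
The plan is to derive both directions directly from the correctness of Derksen--Weyman's algorithm, whose output is by design a final Schur sequence encoding the canonical decomposition.

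For the forward implication, suppose $\mathcal{S}=(\alpha_1,\ldots,\alpha_r)$ is a final Schur sequence for $d$ with $c(\mathcal{S},d)=(p_1,\ldots,p_r)$ strictly positive. My plan is to feed $\mathcal{S}$ into the Derksen--Weyman algorithm itself: its moves are triggered precisely by non-final adjacent pairs, so a final input passes through unchanged. By correctness of the algorithm, the output encodes the canonical decomposition of $d$, which must therefore be $p_1\alpha_1\oplus\cdots\oplus p_r\alpha_r$. The assumption that every $p_i$ is strictly positive guarantees that each $\alpha_i$ contributes a genuine summand rather than being silently dropped. One could equivalently verify Kac's criterion directly: the case $i<j$ of $\ext(\alpha_i,\alpha_j)=0$ is immediate from $\alpha_i\perp\alpha_j$, while the case $i>j$ is exactly what the algorithm buys us.

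For the converse, I would start from the canonical decomposition $q_1\beta_1\oplus\cdots\oplus q_s\beta_s$ with the $\beta_i$ pairwise distinct and produce an ordering that makes the sequence final and Schur. By Kac's criterion, $\ext(\beta_i,\beta_j)=0$ for $i\ne j$, so $\langle \beta_i,\beta_j\rangle = \hom(\beta_i,\beta_j)\ge 0$. A Schur-lemma argument on generic modules of distinct dimension vectors forbids $\hom(\beta_i,\beta_j)>0$ and $\hom(\beta_j,\beta_i)>0$ from holding simultaneously, and similarly rules out longer Hom-cycles. A topological sort of the resulting acyclic Hom-graph then yields an ordering with $\hom(\beta_i,\beta_j)=0$ for $i<j$, hence $\beta_i\perp\beta_j$. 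Strong left orthogonality $\beta_i\sperp\beta_j$ follows from Schofield's identification of $\beta_i\circ\beta_j$ with the dimension of a weight space of semi-invariants, which in this canonical-decomposition setting is forced to equal one. Finality is then automatic, since $\ext(\beta_j,\beta_i)=0$ makes $\langle \beta_j,\beta_i\rangle = \hom(\beta_j,\beta_i)\ge 0$.

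The hardest point, in my view, is the upgrade from $\perp$ to $\sperp$ in the converse: ordinary left orthogonality only records generic vanishing of Hom and Ext, whereas strong left orthogonality pins down a weight space of semi-invariants to dimension one. This upgrade is precisely the structural content supplied by the Derksen--Weyman lemmas on Schur sequences quoted above, so the proof essentially amounts to careful bookkeeping with their results rather than any genuinely new computation.
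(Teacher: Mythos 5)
The paper offers no proof of this proposition at all: it states it as a ``consequence of Derksen--Weyman's algorithm'' and leaves the reader to consult \cite{DWAlgo} and \cite{DWSchurSeqn}. So there is no in-paper argument to compare against; what you have done is attempt a reconstruction. Your forward direction is sound and is essentially the intended reading: the Derksen--Weyman algorithm terminates exactly when the Schur sequence is final, and by correctness of the algorithm its terminal state encodes the canonical decomposition, so a final Schur sequence for $d$ with strictly positive coefficients must already \emph{be} that terminal state. Your converse sketch also identifies the right strategy (extract $\ext(\beta_i,\beta_j)=0$ from Kac's criterion, order by Hom, deduce finality from $\langle\beta_j,\beta_i\rangle=\hom(\beta_j,\beta_i)\ge 0$).

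There are, however, two points where you assert rather than argue, and both are precisely where the content lives. First, ruling out Hom-cycles among distinct canonical-decomposition summands needs an actual mechanism; the standard one is the Happel--Ringel lemma (with $\Ext^1$ vanishing in both directions, any nonzero map between indecomposables is mono or epi, and for Schur modules of distinct dimension vectors this precludes a cycle because a cycle of monos/epis would force an isomorphism). Your phrase ``a Schur-lemma argument \ldots and similarly rules out longer Hom-cycles'' is not a proof. Second, and more seriously, the upgrade from $\perp$ to $\sperp$ --- that is, showing $\beta_i\circ\beta_j=1$ for $i<j$ rather than merely $\beta_i\circ\beta_j>0$ --- is the genuinely nontrivial Derksen--Weyman input, and your justification (``is forced to equal one'') simply restates the conclusion. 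The lemmas quoted in this paper (Lemmas~\ref{ConstructionSchurSequences} and \ref{ConstructionSchurSequences2}) only handle real roots or the case of appending a real root on the left, so they do not by themselves cover summands that are isotropic or strictly imaginary; for those you still need to go back to \cite{DWSchurSeqn}. In short: your outline is the right reconstruction and you correctly flag where the difficulty is, but at that very point the argument becomes an appeal to the same black box the paper is citing, so the gap is named but not closed.
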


The following two lemmas and corollary are about quivers of weakly hyperbolic type. We say that a vector $x$ lies \emph{inside (or outside, or on)} the quadric of $Q$ if $q(x) < 0$ (resp. $q(x) > 0$, or $q(x)=0$).

\begin{Lemma} \label{TrivialLemma}
Let $Q$ be at most weakly hyperbolic and let $x$ be on the quadric.  Then $\{y \mid (x,y)_Q=0, q(y)=0\}$ are the multiples of $x$.
\end{Lemma}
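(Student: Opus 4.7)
The plan is to obtain the conclusion as a direct consequence of Proposition~\ref{LemmaPlane}. The key observation is that the three hypotheses $q(x) = 0$, $q(y) = 0$ and $(x,y)_Q = 0$ together say exactly that $q$ vanishes identically on the entire linear span of $x$ and $y$; if $y$ were not a scalar multiple of $x$, that span would be a $2$-dimensional subspace lying on the quadric, which contradicts Proposition~\ref{LemmaPlane}.

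More precisely, we may assume $x \neq 0$ (otherwise the set of multiples of $x$ is $\{0\}$ and the lemma is implicitly read as applying to a genuine nonzero point of the quadric; for $x=0$ the identity of the two sets only holds in the Dynkin case). Suppose, aiming at a contradiction, that $y \notin \mathbb{R}x$, and set $V := \mathbb{R}x + \mathbb{R}y$, a $2$-dimensional linear subspace of $\mathbb{R}^n$. Using the bilinearity and symmetry of $(-,-)_Q$ together with the identity $q(z) = \tfrac{1}{2}(z,z)_Q$, we expand, for arbitrary $\lambda,\mu \in \mathbb{R}$,
\[
q(\lambda x + \mu y) \;=\; \lambda^2 q(x) \,+\, \lambda\mu\,(x,y)_Q \,+\, \mu^2 q(y) \;=\; 0.
\]
Hence every vector of $V$ lies on the quadric $q(z)=0$, so the quadric of $Q$ contains the plane $V$. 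But since $Q$ is at most weakly hyperbolic, Proposition~\ref{LemmaPlane} tells us that the quadric contains no plane, a contradiction. Therefore $y \in \mathbb{R}x$, proving the lemma.

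There is essentially no obstacle here: the heavy lifting was already carried out in the proof of Proposition~\ref{LemmaPlane}, and the present lemma is a direct repackaging of it. For the record, one could alternatively give a self-contained coordinate proof by diagonalizing the symmetrized form via Sylvester's law of inertia, reducing to $q(z) = -z_1^2 + z_2^2 + \cdots + z_n^2$, and applying the Cauchy--Schwarz inequality to $(x_2,\ldots,x_n)$ and $(y_2,\ldots,y_n)$ exactly as in the necessity part of the proof of Proposition~\ref{LemmaPlane}; but routing the argument through the plane-free characterization is both shorter and conceptually cleaner.
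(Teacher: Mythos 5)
Your proof is correct and takes essentially the same route as the paper: the paper's proof also observes that if $y$ were not a multiple of $x$, the plane spanned by $x$ and $y$ would lie on the quadric, contradicting Proposition~\ref{LemmaPlane}. You have merely made explicit the bilinear-form expansion showing why that plane lies on the quadric, which the paper leaves implicit.
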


\begin{proof}
Assume otherwise.  Then there exists a point $y$, which is not a multiple of $x$, such that $q(y)=0$ and $(x,y)_Q=0$.  This means that the plane generated by $x$ and $y$ lies on the quadric $q(x)=0$, which is contrary to Lemma \ref{LemmaPlane}.
\end{proof}

\begin{Lemma} \label{LemmaWeaklyHyper}
Let $Q$ be of weakly hyperbolic type and let $x$ be a nonzero point inside the quadric.  Then any nonzero point $y$ with $(x,y)=0$ does not lie inside the quadric.
\end{Lemma}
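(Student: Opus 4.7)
The plan is to exploit Sylvester's Law of Inertia to diagonalize the symmetrized form, reducing the statement to a standard fact about Minkowski-style quadratic forms of signature $(n-1,1)$. Concretely, since $Q$ is of weakly hyperbolic type, $A_Q$ has exactly one negative eigenvalue and $n-1$ positive ones, so there is a basis of $\mathbb{R}^n$ in which the matrix of $(-,-)_Q$ is $\mathrm{diag}(-1,1,1,\ldots,1)$. Working in this basis, I would write $x=(x_1,\ldots,x_n)$ and $y=(y_1,\ldots,y_n)$, so that
$$q(x) = -x_1^2 + x_2^2 + \cdots + x_n^2, \qquad (x,y)_Q = -x_1y_1 + x_2y_2 + \cdots + x_ny_n.$$

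Next, I would translate the two hypotheses into coordinate inequalities: $q(x)<0$ becomes $x_1^2 > x_2^2+\cdots+x_n^2$ (in particular $x_1\neq 0$), and $(x,y)_Q=0$ becomes $x_1 y_1 = x_2 y_2 + \cdots + x_n y_n$. Then the key step is the Cauchy--Schwarz inequality applied to the two vectors $(x_2,\ldots,x_n)$ and $(y_2,\ldots,y_n)$ in $\mathbb{R}^{n-1}$:
$$x_1^2 y_1^2 \;=\; (x_2y_2+\cdots+x_ny_n)^2 \;\le\; (x_2^2+\cdots+x_n^2)(y_2^2+\cdots+y_n^2).$$
Combining with $x_2^2+\cdots+x_n^2 < x_1^2$ yields $x_1^2 y_1^2 < x_1^2 (y_2^2+\cdots+y_n^2)$, hence (since $x_1\neq 0$) $y_1^2 < y_2^2+\cdots+y_n^2$, i.e.\ $q(y)>0$.

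I would finish by ruling out the degenerate possibility $y_2=\cdots=y_n=0$: the orthogonality relation would then force $x_1 y_1=0$, whence $y_1=0$ and $y=0$, contradicting the hypothesis that $y$ is nonzero. So in fact the conclusion we obtain is the strict inequality $q(y)>0$, which is stronger than the stated $q(y)\ge 0$.

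There is no real obstacle here; the only thing to be careful about is that Cauchy--Schwarz gives a non-strict inequality, so strictness of $q(y)>0$ has to be extracted from the strict inequality $q(x)<0$ together with the fact that $y\neq 0$ forces $y_2^2+\cdots+y_n^2>0$. Everything else is routine. One could alternatively invoke Lemma \ref{TrivialLemma} (together with continuity/connectedness of the two components of the timelike region) to argue that $x^\perp$ cannot meet the open interior of the quadric, but the direct Cauchy--Schwarz computation above is shorter and self-contained.
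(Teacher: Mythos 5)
Your proof is correct, and it takes a genuinely different and cleaner route than the paper's. The paper argues by contradiction: assuming $q(y)<0$, it uses the positive-definite subspace $H$ spanned by eigenvectors of $A_Q$ with positive eigenvalues together with a connectedness/continuity argument to produce a nonzero $y_0$ in $x^{\perp}$ with $q(y_0)=0$, then invokes the dual form of Lemma~\ref{TrivialLemma} and a dimension count on $H_{y_0}\cap H$ to get a contradiction; the case $n=2$ is handled separately. You instead diagonalize $(-,-)_Q$ via Sylvester's Law of Inertia and run the same Cauchy--Schwarz computation that the paper uses in the proof of Proposition~\ref{LemmaPlane}, applied here to $x^{\perp}$. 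This is shorter, avoids the appeal to Lemma~\ref{TrivialLemma} and the separate low-rank case, handles the degenerate possibility $y_2=\cdots=y_n=0$ explicitly, and delivers the strictly stronger conclusion $q(y)>0$ (the standard Lorentzian fact that the orthogonal complement of a timelike vector is spacelike), whereas the paper's proof only establishes $q(y)\ge 0$. The one small point worth being pedantic about: in your chain $x_1^2 y_1^2 \le (x_2^2+\cdots+x_n^2)(y_2^2+\cdots+y_n^2) < x_1^2(y_2^2+\cdots+y_n^2)$, the second strict inequality uses $y_2^2+\cdots+y_n^2>0$, so the ruling-out of the degenerate case should logically precede that step rather than follow it; you do address it, just slightly out of order.
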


\begin{proof}
Let $y$ be a nonzero point with $(x,y)=0$. Suppose to the contrary that $y$ lies inside the quadric.  Recall that the symmetric matrix $A_Q$ has a negative eigenvalue and all the others are positive. The result is easy if $n=2$: we have that $x^TA_Q$ is negative and hence $(x,y) \ne 0$ if $y \ne 0$. Assume $n \ge 3$. We are given two nonzero points $x,y$ with $q(x)<0$ and $q(y)<0$ such that $(x,y)=0$.  Consider the hyperplane $H_x$ of equation $(x,-)=0$. We have a set $\{v_1, \ldots, v_{n-1}\}$ of $n-1$ orthogonal eigenvectors of $A_Q$ corresponding to positive eigenvalues. Let $H$ be the hyperplane generated by all these $v_i$. We see that for $0 \ne z \in H$, we have $q(z)>0$. Since $H \ne H_x$, the intersection $H \cap H_x$ is of dimension $n-2 > 0$. We claim that there exists a nonzero point $y_0$ in $H_x$ which lies on the quadric. Assume otherwise.  Since $y \in H_x$ and $q$ is continuous, every nonzero point $u\in H_x$ (and in $H \cap H_x$) is such that $q(u) < 0$. Hence $n=2$, a contradiction. Thus, there exists a nonzero point $y_0$ with $(x,y_0)=0$ and $q(y_0)=0$.  Let $H_{y_0}$ be the hyperplane of equation $(-,y_0)=0$.  By the dual of Lemma \ref{TrivialLemma}, the quadratic form restricted to $H_{y_0}$ is negative semi-definite (since $H_{y_0}$ contains $x$). Since $H_{y_0}$ and  $H$ are distinct, the dimension of the intersection $H_{y_0} \cap H$ is $n-2$ and $H_{y_0} \cap H$ lies on the quadric. Hence, $H_{y_0} \cap H = 0$, thus $n=2$, a contradiction.
\end{proof}

The following corollary is a consequence of Lemma \ref{LemmaWeaklyHyper} and \cite[Theorem 4.1]{SchofieldGeneral}. Its says, in particular, that in the canonical decomposition of a dimension vector supported by a quiver which is at most weakly hyperbolic, at most one imaginary Schur root may occur, up to multiplicity.

\begin{Cor}
Let $Q$ be at most weakly hyperbolic. If $\mu,\nu$ are two imaginary roots appearing in the same Schur sequence, then $(\mu, \nu) < 0$.  In particular, one of $\langle \mu,\nu \rangle$, $\langle \nu,\mu \rangle$ is negative.
\end{Cor}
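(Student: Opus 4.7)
Plan. The statement is vacuous unless $Q$ is weakly hyperbolic: if $Q$ is of Dynkin type there are no imaginary roots, while if $Q$ is of Euclidean type the only imaginary Schur root is the null root $\delta$, which cannot appear twice in a Schur sequence since $\delta\not\sperp\delta$ (as was noted in the excerpt). So assume $Q$ is weakly hyperbolic and, reordering $\mathcal S$ if necessary, that $\mu$ precedes $\nu$. The defining relation $\mu\sperp\nu$ forces $\hom(\mu,\nu)=\ext(\mu,\nu)=0$ and hence $\langle\mu,\nu\rangle=0$, so $(\mu,\nu)=\langle\mu,\nu\rangle+\langle\nu,\mu\rangle=\langle\nu,\mu\rangle$. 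Thus proving $(\mu,\nu)<0$ is equivalent to proving $\langle\nu,\mu\rangle<0$, which gives the ``in particular'' clause. The weak inequality $(\mu,\nu)\le 0$ is the input supplied by \cite[Theorem 4.1]{SchofieldGeneral} applied to the pair $\mu,\nu$; this is the only external ingredient of the proof, and what remains is to rule out the equality.

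Suppose then toward contradiction that $(\mu,\nu)=0$, and split on the values of $q(\mu),q(\nu)\le 0$. If both $\mu$ and $\nu$ are isotropic, Lemma \ref{TrivialLemma} applied with $x=\mu$ forces $\nu$ to be a positive scalar multiple of $\mu$; since isotropic Schur roots are primitive (any proper positive integer multiple of the null root of a Euclidean subquiver has a generic representation that decomposes, so fails to be Schur), this yields $\mu=\nu$. But in any Schur sequence the entries are pairwise distinct, because $\hom(\alpha,\alpha)\ge 1$ for a Schur root $\alpha$ forbids $\alpha\sperp\alpha$. This contradiction handles the first case. If both $\mu$ and $\nu$ are strictly imaginary, then Lemma \ref{LemmaWeaklyHyper} applied with $x=\mu$ nonzero and inside the quadric, together with $(\mu,\nu)=0$, forces $q(\nu)\ge 0$, contradicting $q(\nu)<0$.

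The remaining mixed case is, up to swapping, $q(\mu)<0$ and $q(\nu)=0$. For each $t\in\mathbb R$, set $z_t:=\mu+t\nu$. Using $(\mu,\nu)=0$ and $q(\nu)=0$ one computes $q(z_t)=q(\mu)+t(\mu,\nu)+t^2 q(\nu)=q(\mu)<0$, with $z_t\neq 0$, so each $z_t$ lies strictly inside the quadric. For any $w\in\mathbb R^n$ with $(w,\nu)\neq 0$, the choice $t=-(w,\mu)/(w,\nu)$ ensures $(w,z_t)=0$, whence Lemma \ref{LemmaWeaklyHyper} applied with $x=z_t$ forces $q(w)\ge 0$. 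Since $\{w:(w,\nu)\neq 0\}$ is dense in $\mathbb R^n$ (as $\nu\neq 0$), continuity of the polynomial $q$ propagates this inequality to all of $\mathbb R^n$, contradicting $q(\mu)<0$. The main obstacle is the correct use of Schofield's Theorem 4.1 to secure the non-strict inequality $(\mu,\nu)\le 0$; once that is in hand, the lemmas of the previous section dispatch the equality case, and the mixed case is the only one requiring a new idea, namely the perturbation $z_t=\mu+t\nu$ combined with a continuity argument.
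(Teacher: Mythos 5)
The paper states this corollary without a written proof, attributing it to Lemma \ref{LemmaWeaklyHyper} together with \cite[Theorem 4.1]{SchofieldGeneral}, so there is no explicit argument to compare against. Your proposal supplies the missing details and is correct: you reduce to the weakly hyperbolic case (Dynkin has no imaginary roots; Euclidean has only $\delta$ as an imaginary Schur root and $\delta\not\sperp\delta$), observe that $\mu\sperp\nu$ gives $\langle\mu,\nu\rangle=0$ so that $(\mu,\nu)=\langle\nu,\mu\rangle$, pull the weak inequality $(\mu,\nu)\le 0$ from Schofield, and rule out equality by a three-way case split on whether each of $\mu,\nu$ is isotropic or strictly imaginary. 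Two remarks worth recording. Your both-isotropic case rests on Lemma \ref{TrivialLemma}, which the paper's one-line attribution does not cite; since Lemma \ref{LemmaWeaklyHyper} only concerns points strictly inside the quadric and cannot see a pair of isotropic roots, this case does genuinely need Lemma \ref{TrivialLemma} (or some structural ingredient of Schofield's theorem that the attribution leaves implicit), so this looks like an omission on the paper's side rather than a gap in your argument. Your mixed case (one strictly imaginary, one isotropic), handled via the family $z_t=\mu+t\nu$ together with a density-and-continuity passage, is valid but somewhat indirect; a shorter route is available: $(\mu,\nu)=0$ and $q(\nu)=0$ place $\mu$ in the hyperplane $H_\nu=\{w:(w,\nu)=0\}$, and Lemma \ref{TrivialLemma} says the null cone of $q|_{H_\nu}$ is exactly the line $\mathbb{R}\nu$; for a form with a single negative eigenvalue this forces $q|_{H_\nu}$ to be positive semi-definite with radical $\mathbb{R}\nu$, so $q(\mu)\ge 0$, contradicting $q(\mu)<0$. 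Either route closes the case, and the overall argument is sound.
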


In the sequel, Derksen-Weyman's algorithm will be used, and the reader is referred to \cite{DWAlgo} for a description of this algorithm. For convenience, we denote by (DW) this algorithm. We use the same notation and numbering for the steps of (DW): they are numbered from $(1)$ to $(6)$.
Let us just briefly say what the algorithm does by giving the general lines, without details.
Let $d$ be a dimension vector. The algorithm starts with any Schur sequence $\mathcal{S}$ for $d$. At every step, the coefficients of $d$ corresponding to a Schur sequence are non-negative. A \emph{loop} of (DW) just changes a Schur sequence for $d$ to another Schur sequence for $d$, which may have smaller length. It is the use of steps (5) and (6) once. After each loop (so going from a Schur sequence $\mathcal{T}_1$ for $d$ to another Schur sequence $\mathcal{T}_2$ for $d$), the sum of the coefficients in $c(\mathcal{T}_2,d)$ is smaller than the sum of the coefficients in $c(\mathcal{T}_1,d)$. The algorithm terminates when the Schur sequence is final. Note that, after a loop, we may have a root with a zero coefficient attached to it. This root is discarded in (DW), so that the length of the Schur sequence decreases by one. Moreover, for a given loop, two Schur roots may be combined to get one imaginary Schur root, which will also have as effect to decrease the length of the Schur sequence by one. At any time, the length of a sequence is bounded by $n$. More precisely, the number of real Schur roots plus twice the number of imaginary Schur roots is bounded by $n$.

\medskip

\begin{Lemma} \label{LemmaStrictlyInE}
Let $d$ be a dimension vector and suppose that we have a Schur sequence $E$ such that $d$ is a positive linear combination of the roots in $E$. If $E$ contains a strictly imaginary Schur root, then the canonical decomposition of $d$ contains a strictly imaginary Schur root.
\end{Lemma}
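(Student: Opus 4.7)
\emph{Plan.} My plan is to execute Derksen-Weyman's algorithm (DW) on the given Schur sequence $E$ and to verify, by induction on the number of loops, that every Schur sequence for $d$ produced along the way still contains a strictly imaginary Schur root with positive coefficient. Since $c(E,d)$ is strictly positive, the algorithm terminates at a final Schur sequence $\mathcal{T}$ with $c(\mathcal{T},d)$ strictly positive, and the Derksen-Weyman proposition preceding this lemma then identifies the nonzero entries of $c(\mathcal{T},d)$ with the canonical decomposition of $d$. Propagating the invariant to $\mathcal{T}$ yields the conclusion.

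\emph{One loop.} A single loop of (DW) locates the leftmost consecutive pair $(\alpha_i,\alpha_{i+1})$ in the current Schur sequence with $\langle \alpha_{i+1},\alpha_i\rangle<0$ and modifies only this pair, leaving every other root together with its positive coefficient intact. Hence if the strictly imaginary Schur root $\mu$ of the current sequence lies outside this pair, the invariant propagates trivially. The work is in the remaining case $\mu\in\{\alpha_i,\alpha_{i+1}\}$, in which (DW) either (a) combines the pair into a single new imaginary Schur root $\delta=p_i\alpha_i+p_{i+1}\alpha_{i+1}$, or (b) replaces the pair by two new Schur roots obtained by a reflection-type mutation inside the perpendicular subcategory generated by $\{\alpha_i,\alpha_{i+1}\}$.

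\emph{Key inequalities.} In case (a), left orthogonality $\alpha_i\perp\alpha_{i+1}$ gives $\langle\alpha_i,\alpha_{i+1}\rangle=0$, whence $(\alpha_i,\alpha_{i+1})=\langle\alpha_{i+1},\alpha_i\rangle<0$, and
$$q(\delta)=p_i^2 q(\alpha_i)+p_{i+1}^2 q(\alpha_{i+1})+p_i p_{i+1}(\alpha_i,\alpha_{i+1}).$$
With $\mu$ contributing a strictly negative quadratic term and the cross term being strictly negative, I would deduce that $q(\delta)<0$, so $\delta$ is strictly imaginary. In case (b), the Tits form restricted to the $2$-plane spanned by $\{\alpha_i,\alpha_{i+1}\}$ has a negative direction (contributed by $\mu$); since the (DW) mutation rewrites the pair inside this $2$-plane by a linear reparametrization, the negative-direction data is preserved, forcing at least one of the two new Schur roots to be strictly imaginary.

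\emph{Main obstacle.} The principal difficulty lies in case (a) when the partner of $\mu$ is a real Schur root, so that $q(\alpha_{i+1})=+1$ can a priori dominate the negative contributions from $\mu$ and from $(\alpha_i,\alpha_{i+1})$. Resolving this requires invoking the precise criterion built into steps $(5)$ and $(6)$ of (DW) for triggering a merge: a merge is performed only when the resulting $\delta$ is forced to be an imaginary Schur root; combined with the strict negativity of $q(\mu)$, this rules out isotropy of $\delta$. A similar attentiveness is required in case (b) to exclude the pathological scenario of a strictly imaginary pair being mutated into an arrangement of real (or isotropic) Schur roots; the signature argument above furnishes the needed obstruction.
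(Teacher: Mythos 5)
Your overall strategy—run (DW) starting from $E$ and verify that the presence of a strictly imaginary summand with positive coefficient is preserved at every loop—is exactly the approach the paper takes; the paper's proof simply cites the specific sub-steps $(6)(b)(ii)$, $(6)(c)(ii)$ and $(6)(d)$ as the places where a strictly imaginary root gets touched, and asserts the invariant survives a careful reading of those sub-steps. So in outline you are aligned. However, the extra detail you attempt to supply is where the gaps appear, and they are genuine.

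First, in your case (a) (merge), your own formula
\[
q(\delta)=p_i^2 q(\alpha_i)+p_{i+1}^2 q(\alpha_{i+1})+p_i p_{i+1}(\alpha_i,\alpha_{i+1})
\]
does not yield $q(\delta)<0$ when the partner of $\mu$ is real: with $q(\alpha_{i+1})=1$, the term $p_{i+1}^2$ can dominate if $p_{i+1}$ is large relative to $p_i$, and you acknowledge this. Your proposed rescue—\emph{``the merge is performed only when $\delta$ is forced to be imaginary, and strict negativity of $q(\mu)$ rules out isotropy''}—is an assertion, not an argument. The formula permits $q(\delta)=0$, and nothing you have written excludes it. The actual content of $(6)(c)(ii)$ is more refined than ``merge the whole local vector'': the real summand is only partially absorbed (a sub-multiple $q\alpha_{i+1}$ with $0<q\le p_{i+1}$, chosen so that the absorption result is again strictly imaginary, with any remainder $(p_{i+1}-q)\alpha_{i+1}$ surviving). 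Without stating this mechanism you cannot close case (a).

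Second, your case (b) description—a \emph{``reflection-type mutation inside the perpendicular subcategory generated by $\{\alpha_i,\alpha_{i+1}\}$''}—does not apply when one of the two roots is strictly imaginary, because a strictly imaginary Schur root is not exceptional and so does not generate a rank-two perpendicular subcategory in the sense of exceptional mutation. And the ``signature argument'' you invoke is not an obstruction: the Tits form restricted to the span of $\{\alpha_i,\alpha_{i+1}\}$ is indefinite even when both produced roots are real (this already happens in the generalized Kronecker quiver $K_m$, $m\ge 3$, where the two simples are real yet the rank-two form has a negative direction). The indefiniteness of the form on the plane therefore does not force one of the new roots to be strictly imaginary. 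To make the argument rigorous you need to actually unfold what (DW) does in sub-steps $(6)(b)(ii)$, $(6)(c)(ii)$, $(6)(d)$—these are precisely the loops that can alter a strictly imaginary summand—and check that each of them outputs a strictly imaginary Schur root with positive coefficient. As written, both the merge case and the mutation case rest on claims that are either false as stated or left unproved.
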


\begin{proof}
Apply the algorithm (DW) for $d$ starting with $E$. By a careful check at steps (6)(b)(ii), (6)(c)(ii) and (6)(d), we see that after each loop, the  Schur sequence produced by the algorithm contains a strictly imaginary Schur root, and the coefficient attached to it is positive. In particular, the canonical decomposition of $d$ contains a strictly imaginary Schur root.
\end{proof}

Given a Schur sequence of real Schur roots $\mathcal{S}$, we denote by $\C(\mathcal{S})$ the thick subcategory generated by the objects of the corresponding exceptional sequence. It is well known that $\C(\mathcal{S})$ is equivalent to the category of representations of an acyclic quiver having the same number of vertices as the length of $\mathcal{S}$. The following lemma explains how to use the refinement theorem in \cite{DWSchurSeqn} in the context of a Schur sequence having no strictly imaginary Schur roots.

\begin{Lemma} \label{refinement}
Let $(\alpha_1, \alpha_2, \ldots, \alpha_r)$ be a Schur sequence for $d$ without strictly imaginary Schur roots and suppose that $\alpha_i$ is isotropic for some $i$ with $i$ minimal. Then there exist two real Schur roots $\beta, \gamma$ such that $(\alpha_1, \alpha_2, \ldots,\alpha_{i-1}, \beta, \gamma, \alpha_{i+1}, \ldots, \alpha_r)$ is a Schur sequence with $\alpha_i = \beta + \gamma$.
\end{Lemma}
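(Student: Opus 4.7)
The plan is to pass to a perpendicular category in which $\alpha_i$ becomes the isotropic Schur root of a smaller acyclic quiver, decompose it there into two real Schur roots forming an exceptional pair, lift this decomposition back, and then promote the resulting weak Schur sequence to a Schur sequence using Lemmas \ref{ConstructionSchurSequences} and \ref{ConstructionSchurSequences2}.

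Because $\mathcal{S}$ has no strictly imaginary Schur roots and $i$ is minimal with $\alpha_i$ isotropic, the roots $\alpha_1, \ldots, \alpha_{i-1}$ are real Schur and form an exceptional sequence. By Schofield's theorem on perpendicular categories, $\mathcal{C}(\alpha_1, \ldots, \alpha_{i-1})^{\perp}$ is equivalent to $\rep(Q')$ for some acyclic quiver $Q'$ with $n - i + 1$ vertices, and under this equivalence $\alpha_i$ becomes an isotropic Schur root of $Q'$. By standard root theory, $\alpha_i$ is then the null root $\delta$ of a full Euclidean subquiver $Q_E \subseteq Q'$.

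The next step is to decompose $\delta$ inside $Q_E$ as $\delta = \beta_0 + \gamma_0$ with $\beta_0, \gamma_0$ real Schur roots and $(\beta_0, \gamma_0)$ an exceptional pair. For this one uses the tube structure of the Auslander--Reiten quiver of $kQ_E$: the quasi-simple regular modules at the mouths of the non-homogeneous tubes have real Schur dimension vectors, and one can pick two of them (for instance, two consecutive quasi-simples in a single tube of rank $\ge 2$) whose dimension vectors sum to $\delta$ and form an exceptional pair. Pulling $\beta_0, \gamma_0$ back through the equivalence yields real Schur roots $\beta, \gamma$ of $Q$ with $\beta + \gamma = \alpha_i$, $\beta \perp \gamma$, and both lying in $\mathcal{C}(\alpha_1, \ldots, \alpha_{i-1})^{\perp}$. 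Consequently $\alpha_k \perp \beta$ and $\alpha_k \perp \gamma$ for $k < i$. Moreover, since $\beta, \gamma$ together generate (inside the ambient perpendicular category) the same thick subcategory as $\alpha_i$, the right orthogonality of $\alpha_{i+1}, \ldots, \alpha_r$ to $\alpha_i$ (coming from $\mathcal{S}$) transfers to right orthogonality with $\beta$ and $\gamma$.

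Thus $\mathcal{S}' := (\alpha_1, \ldots, \alpha_{i-1}, \beta, \gamma, \alpha_{i+1}, \ldots, \alpha_r)$ is a weak Schur sequence. Its tail $(\alpha_{i+1}, \ldots, \alpha_r)$ is a Schur sequence as a subsequence of $\mathcal{S}$; iteratively prepending the real Schur roots $\gamma, \beta, \alpha_{i-1}, \ldots, \alpha_1$ and applying Lemma \ref{ConstructionSchurSequences2} at each step shows that $\mathcal{S}'$ is itself a Schur sequence, as required. The main technical obstacle is the middle paragraph: verifying that the null root of any Euclidean quiver admits a decomposition into an exceptional pair of real Schur roots with \emph{strong} left orthogonality (not merely $\perp$). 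This will likely require a case-by-case analysis of the types $\widetilde{\mathbb{A}}_n$, $\widetilde{\mathbb{D}}_n$, $\widetilde{\mathbb{E}}_{6,7,8}$, or a uniform argument using the regular tube structure and the theory of exceptional sequences for tame hereditary algebras.
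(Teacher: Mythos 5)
Your reduction to the perpendicular category $\C(\alpha_1,\ldots,\alpha_{i-1})^\perp$ is the same first move as the paper makes (the paper reduces to $i=1$ by exactly this device), but the proof breaks at the crucial middle step. You assert that the isotropic Schur root $\alpha_i$, viewed inside $Q'$, must be ``the null root of a full Euclidean subquiver $Q_E\subseteq Q'$ by standard root theory.'' This is false. What is true from root theory is that any positive isotropic root is \emph{Weyl-conjugate} to a null root of an affine subdiagram, but Weyl conjugation does not preserve the Schur property or the support. There are isotropic Schur roots with wild support: for the quiver with vertices $\{1,2,3\}$, one arrow $1\to 3$ and two arrows $2\to 3$ (which is wild, $\det A_Q=-2$), the vector $(1,1,1)$ has $q(1,1,1)=0$ and the generic representation has trivial endomorphism ring, so $(1,1,1)$ is an isotropic Schur root with full support. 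Your $Q_E$ simply does not exist in such cases, and the tube-based decomposition $\delta=\beta_0+\gamma_0$ has nowhere to live.

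The fact that you actually need --- that any isotropic Schur root is the null root of a \emph{rank-two tame thick subcategory} $\C(X,Y)$, which is generally not a subquiver (in the example above it is $\C(P_1,S_2)$ with $d_{P_1}=(1,0,1)$, $d_{S_2}=(0,1,0)$) --- is precisely the content of Proposition \ref{PropAccuPoints}, which the paper derives as a \emph{consequence} of Lemma \ref{refinement}; invoking it here would be circular. The paper instead proceeds by a double induction on $n+s(\alpha_i)$. After reducing to $i=1$, it appeals to the Derksen--Weyman refinement machinery: $\alpha_1$ fails to be $\sigma$-stable for $\sigma=-\langle -,\sum_{j\ge 2}\alpha_j\rangle$, hence admits a nontrivial $\sigma$-stable decomposition $\alpha_1=p_1\beta_1+\cdots+p_s\beta_s$ with $s>1$ into Schur roots that, by Lemma \ref{LemmaStrictlyInE}, are never strictly imaginary. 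Running loops of the (DW) algorithm then compresses this into a rank-two Schur subsequence $(\mu,\nu)$ for $\alpha_1$ with at most one of $\mu,\nu$ isotropic, and the remaining cases are closed by the inductive hypothesis (which applies because $n$ or $s$ drops) together with reflections inside $\C(\mu,\nu)$. Your final paragraph, about promoting a weak Schur sequence to a Schur sequence via Lemma \ref{ConstructionSchurSequences2}, is sound and does match the paper's strategy once one has the real pair in hand; the gap is entirely in producing that pair.
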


\begin{proof} We proceed by induction on $n+s(\alpha_i)$, where $n$ is the number of vertices of the quiver and $s(\alpha_i)$ is the sum of the coefficients in $\alpha_i$. If $n=2$, then $r=1$ and $Q$ is the Kronecker quiver. In this case, we take $\beta, \gamma$ the simple roots of $Q$ such that $(\beta, \gamma)$ is a Schur sequence. Assume that $n > 2$. Of course, $s(\alpha_i) \ge 2$.  If $\alpha_1$ is real, then the Schur sequence $(\alpha_2, \ldots, \alpha_r)$ can be seen as a Schur sequence of an acyclic quiver with $n-1$ vertices, and we can use induction. So we may assume that $\alpha_1$ is isotropic, that is, $i=1$. Consider $\sigma = -\langle -, \delta \rangle$, where $\delta = \sum_{i=2}^r\alpha_i$. As argued in \cite[¸proof of the refinement theorem]{DWSchurSeqn}, $\alpha_1$ is not $\sigma$-stable and hence, we have a non-trivial $\sigma$-stable decomposition of $\alpha_1$:
$$\alpha_1 = p_1\beta_1 + \cdots +p_s\beta_s,$$
where $s > 1$, and for $1 \le i \le s,$ the $p_i$ are positive integers and $\beta_i$ are $\sigma$-stable Schur roots. The $\beta_i$ can be ordered so that $\mathcal{T}=(\beta_1, \ldots, \beta_s)$ is a Schur sequence. In particular, we see that $\mathcal{T}$ is a Schur sequence for $\alpha_1$. As argued in \cite[proof of the refinement theorem]{DWSchurSeqn}, the sequence
$$\mathcal{W} = (\beta_1, \ldots, \beta_s, \alpha_2, \ldots, \alpha_r)$$
is a Schur sequence, and hence is a Schur sequence for $d$. We may start applying loops of the algorithm (DW) starting with the Schur sequence $\mathcal{W}$ for $d$ and only care about the subsequence $\mathcal{T}$. Since $\alpha_1$ is isotropic, by Lemma \ref{LemmaStrictlyInE}, none of the $\beta_i$ is strictly imaginary. Also, since the roots in $\mathcal{T}$ are linearly independent by Lemma \ref{ConstructionSchurSequences3}, at most one $\beta_i$ is isotropic. After a certain number of loops of (DW), we will have a Schur sequence
$$(\mu, \nu, \alpha_2, \ldots, \alpha_r)$$
for $d$ where at most one of $\mu, \nu$ is isotropic and $(\mu, \nu)$ is a Schur sequence for $\alpha_1$. If both $\mu, \nu$ are real, then consider $\beta, \gamma$ the simple roots in $\C(\mu, \nu)$. The sequence $(\beta, \gamma, \alpha_2, \ldots, \alpha_r)$ is Schur by Lemma \ref{ConstructionSchurSequences2} and is a desired sequence. Otherwise, one of $\mu, \nu$ is isotropic. If $\mu$ is real, we may use induction for $(\nu, \alpha_2, \ldots, \alpha_r)$ since $n-1 < n$ and $s(\nu) < s(\alpha_1)$, and we get a Schur sequence $(\beta', \gamma', \alpha_2, \ldots, \alpha_r)$ where $\nu= \beta'+ \gamma'$ and $\beta', \gamma'$ are real. Since $\mu$ is real, by Lemma \ref{ConstructionSchurSequences2}, we get that $(\mu, \beta', \gamma', \alpha_2, \ldots, \alpha_r)$ is a Schur sequence. Since the Schur sequence $(\mu, \nu)$ is not final, we have that $\langle \nu, \mu \rangle < 0$ and $\alpha_1 = \nu - \langle \nu, \mu \rangle \mu$. Taking the Schur sequence $(\mu, \beta', \gamma')$ and reflecting the roots $\beta', \gamma'$ to the left of $\mu$ yields a Schur sequence of real Schur roots $(\epsilon_1, \epsilon_2, \mu)$, where $\epsilon_1 = \pm (\beta' - \langle \beta', \mu \rangle \mu), \epsilon_2 = \pm (\gamma' - \langle \gamma', \mu \rangle \mu)$. We see now that $\alpha_1$ is a root in $\C(\epsilon_1, \epsilon_2)$. Take $\phi_1, \phi_2$ be the simple roots in $\C(\epsilon_1, \epsilon_2)$ with $(\phi_1, \phi_2)$ a Schur sequence. By Lemma \ref{ConstructionSchurSequences2}, the sequence $(\phi_1, \phi_2, \mu, \alpha_2, \ldots, \alpha_r)$ is Schur. Now, $\alpha_1 = \phi_1+ \phi_2$ and the latter sequence is a desired sequence.
Suppose now that $\mu$ is isotropic. Then $s(\mu) < s(\alpha_1)$ and by induction, there is a Schur sequence $(\beta', \gamma', \nu, \alpha_2, \ldots, \alpha_r)$ where $\mu= \beta'+ \gamma'$ and $\beta', \gamma'$ are real. Since the Schur sequence $(\mu, \nu)$ is not final, we have that $\langle \nu, \mu \rangle < 0$ and $\alpha_1 = \mu - \langle \nu, \mu \rangle \nu$. Taking the Schur sequence $(\beta', \gamma', \nu)$ and reflecting the roots $\beta', \gamma'$ to the right of $\nu$ yields a Schur sequence of real Schur roots $(\nu, \epsilon_1, \epsilon_2)$, where $\epsilon_1 = \pm (\beta' - \langle \nu, \beta' \rangle \nu), \epsilon_2 = \pm (\gamma' - \langle \nu, \gamma' \rangle \nu)$. We see now that $\alpha_1$ is a root in $\C(\epsilon_1, \epsilon_2)$. Take $\phi_1, \phi_2$ be the simple roots in $\C(\epsilon_1, \epsilon_2)$ with $(\phi_1, \phi_2)$ a Schur sequence. Finally, take $\phi_3$ a real Schur root in $\C(\mu, \phi_1, \phi_2)$ such that $(\phi_1, \phi_2, \phi_3)$ is a Schur sequence. By Lemma \ref{ConstructionSchurSequences2}, the sequence $(\phi_1, \phi_2, \phi_3, \alpha_2, \ldots, \alpha_r)$ is Schur.  This is a desired Schur sequence.
\end{proof}

The following lemma will be very handy for completing Schur sequences on the "right side".

\begin{Lemma} \label{CompleteNonProjective}
Let $(M_{r+1},M_{r+2},\ldots,M_n)$ with $1 \le r \le n-1$ be an exceptional sequence in $\rep(Q)$ such that $\oplus_i M_i$ is sincere. Then there exists an exceptional sequence $E:=(M_r, M_{r+1}, \ldots, M_n)$ such that $M_r$ is not projective in $\C(E)$.
\end{Lemma}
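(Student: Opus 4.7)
The plan is to work inside the perpendicular category $\mathcal{D} := {}^{\perp}\langle M_{r+1},\ldots, M_n\rangle$. By Schofield's perpendicular-category theorem, $\mathcal{D}$ is a wide subcategory of $\rep(Q)$ equivalent to $\rep(Q')$ for some acyclic quiver $Q'$ on $r$ vertices, and the admissible choices of $M_r$ are precisely the exceptional objects of $\mathcal{D}$.

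The key consequence of the sincerity hypothesis is that no indecomposable projective $P_x$ of $\rep(Q)$ lies in $\mathcal{D}$: indeed, $\dim \Hom(P_x, M_j) = (M_j)_x$, and by sincerity some $j > r$ satisfies $(M_j)_x > 0$, so $P_x \notin \mathcal{D}$. Consequently, every exceptional object of $\mathcal{D}$ is non-projective in the ambient $\rep(Q)$.

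I propose to take $M_r$ to be an indecomposable projective object $P^{\mathcal{D}}$ of the wide subcategory $\mathcal{D}$ itself; such an object is exceptional in $\rep(Q)$ but, by the preceding observation, not a projective of $\rep(Q)$. To establish that $M_r := P^{\mathcal{D}}$ is not projective in the smaller wide subcategory $\C(E) = \langle M_r, M_{r+1}, \ldots, M_n \rangle$, I must exhibit an object $X \in \C(E)$ with $\Ext^1(M_r, X) \neq 0$. Using Auslander--Reiten duality $\Ext^1_{\rep(Q)}(M_r, -) \cong D\Hom(-, \tau M_r)$ with $\tau M_r \neq 0$, such an $X$ certainly exists in $\rep(Q)$; the question is to place it inside $\C(E)$. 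Equivalently, one examines the almost-split sequence $0 \to \tau M_r \to Y \to M_r \to 0$ and asks whether the middle term $Y$ lies in $\C(E)$, in which case $\tau M_r$ appears as a kernel inside $\C(E)$ and supplies the needed witness.

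The main obstacle is that $\tau$ does not in general preserve wide subcategories, so $\tau M_r$ need not a priori belong to $\C(E)$. To overcome this I would either (i) produce the witness concretely through a non-split short exact sequence $0 \to K \to M_j \to M_r \to 0$ arising from a surjection $M_j \twoheadrightarrow M_r$, whereupon $K$ automatically lies in $\C(E)$ as a subobject, in the wide sense, of a generator; or (ii) replace $P^{\mathcal{D}}$ by another exceptional object of $\mathcal{D}$ and iterate, using sincerity and a counting argument over the finitely many ``bad'' candidates that would be projective in $\C(E)$. The heart of the argument is to use the interplay between sincerity and the structure of $\mathcal{D}$ to ensure that for some admissible $M_r$ either the almost-split sequence already lives in $\C(E)$, or else some $\Hom(M_j, M_r)$ is nonzero producing the required surjection.
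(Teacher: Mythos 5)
Your setup is sound and your sincerity observation is correct and indeed matches the paper's use of the hypothesis: since $\dim\Hom(P_x,M_j)=(d_{M_j})_x$, sincerity of $\oplus_j M_j$ forces every indecomposable projective of $\rep(Q)$ out of $\mathcal{D}={}^\perp\C(M_{r+1},\ldots,M_n)$, so any exceptional object chosen from $\mathcal{D}$ is automatically non-projective in $\rep(Q)$. The problem is that this is the \emph{easy} half of the lemma. The conclusion you need is strictly stronger: the chosen $M_r$ must fail to be projective in the \emph{rank-$(n-r+1)$} thick subcategory $\C(E)$, which overlaps with $\mathcal{D}$ only in $M_r$ itself. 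Non-projectivity in $\rep(Q)$ does not transfer downward to a wide subcategory, and you explicitly acknowledge that you cannot place the Auslander--Reiten witness $\tau M_r$ (or a kernel $K$) inside $\C(E)$. Neither of your fallback strategies (i) or (ii) is carried out, so the argument stops exactly where the real difficulty begins.

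The paper resolves this with a different choice of $M_r$ and a linear-algebra trick you have not anticipated. It takes $M_r$ to be the \emph{last simple} object of $\mathcal{D}$ in a suitable exceptional ordering $(M_1,\ldots,M_r)$ of the relative simples (not a projective of $\mathcal{D}$, as you propose), and introduces the auxiliary object $N_1$ obtained by reflecting $M_r$ leftward past $M_1,\ldots,M_{r-1}$; this $N_1$ is projective in $\mathcal{D}$ and hence non-projective in $\rep(Q)$, so $\tau N_1$ exists. Reflecting $N_1$ all the way to the right in $(N_1,M_1,\ldots,M_{r-1},M_{r+1},\ldots,M_n)$ produces an object $Z=\tau N_1$ which, crucially, also arises by reflecting $M_r$ to the right of $M_{r+1},\ldots,M_n$ and therefore lies in $\C(E)$. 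The contradiction is then extracted from a hyperplane-sign computation: $d_{N_1}$ and $d_Z$ lie on opposite sides of the hyperplane $H$ spanned by $d_1,\ldots,d_{r-1},d_{r+1},\ldots,d_n$, while $f(d_{N_1})=f(d_r)$ for the linear functional $f$ cutting out $H$; since the hyperplane $H'$ spanned by $d_{r+1},\ldots,d_n$ is contained in $H$, this forces $d_r$ and $d_Z$ onto opposite sides of $H'$, which is incompatible with $M_r$ being projective in $\C(E)$. This mutation-plus-side-of-hyperplane mechanism is the genuinely new idea your proposal is missing, and without something playing its role the claim about $\C(E)$ remains unproved.
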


\begin{proof}If $r = 1$, then $M_1$ is unique and it is not projective since otherwise, $\oplus_{i = r+1}^n M_i$ would not be sincere. Consider the category $\C_1 = \C(M_{r+1}, \ldots, M_n)$ and $\C_2 = ^\perp \C_1$. Since $\C_2$ is generated by an exceptional sequence of length $r$, $\C_2$ is indeed generated by its simple objects $M_1, \ldots, M_r$. These simple objects may be ordered to form an exceptional sequence and hence, we may assume that $(M_1, \ldots, M_r, M_{r+1}, \ldots, M_n)$ is exceptional and $\C_2 = \C(M_1, \ldots, M_r)$. We claim that the given $M_r$ satisfies the property of the statement. Set $d_i = d_{M_i}$. Since the $M_i$, $1 \le i \le r$, are the simple objects in $\C_2$, we have $m_{i,j}:=-\langle d_j, d_i \rangle \ge 0$ for $1 \le i<j \le r$. By using successive reflections, we get exceptional objects $N_i$, $1 \le i \le r-1$ such that we have an exceptional sequence $(M_1, \ldots, M_{i-1}, N_i, M_{i}, \ldots, M_{r-1})$ for $2 \le i \le r-1$ and an exceptional sequence $(N_1,M_1, \ldots,M_{r-1})$. Observe that $d_{N_i} = d_r + \sum_{i \le j \le r-1}r_jd_j$ where $r_j$ is the sum of all the $2^{r-j-1}$ products of the form $m_{r, i_1} m_{i_1, i_2} \cdots m_{i_{s-1}, i_s}m_{i_s,j}$ where $j < i_s < \cdots < i_1 < r$. Now, $N_1$ is a projective object in $\C_2$. By hypothesis, it is not projective in $\rep(Q)$. Considering the exceptional sequence
$$(N_1, M_1, M_2, \ldots, M_{r-1}, M_{r+1}, \ldots, M_n),$$
and by reflecting $N_1$ to the right, we get an exceptional sequence
$$(M_1, M_2, \ldots, M_{r-1}, M_{r+1}, \ldots, M_n, Z)$$
and where $Z$ has to coincide with $\tau N_1$. Observe also that $Z \in \C(E)$, since we can get $Z$ by reflecting $M_r$ to the right of $M_{r+1}, \ldots, M_n$. Now, $N_1, Z$ lie on opposite sides of the hyperplane $H$ in $\Delta_Q$ generated by $d_1, \ldots, d_{r-1}, d_{r+1}, \ldots, d_n$. If $M_r$ was projective in $\C(E)$, then $d_r,d_Z$ would lie on the same side of the hyperplane $H'$ generated by $d_{r+1}, \ldots, d_n$. Suppose that $H$ is given by a homogeneous equation $f(-)=0$. Then $f(d_{N_1}), f(d_Z)$ have opposite sign. However, we see that $f(d_{N_1}) = f(d_r)$. Hence, $d_r,d_Z$ lie on opposite sides of $H$. Since $H'$ is a subspace of $H$, we see that $d_r,d_Z$ lie on opposite sides of $H'$ and this proves the claim.
\end{proof}

\section{Strictly imaginary Schur roots}

In this section, we want to study strictly imaginary Schur roots appearing in canonical decompositions. We will show that if a dimension vector $d$ contains a strictly imaginary Schur root in its canonical decomposition, then there is a small neighborhood $\mathcal{U}$ of $d$ such that the dimension vectors in $\mathcal{U}$ satisfy the same property. The reader is referred to Section $1$ for a description of the topology on $\Delta_Q$.

\medskip

We first start with some new definitions that will simplify the notations. Given a set of vectors $\alpha_1, \ldots, \alpha_r$ in $\mathbb{R}^n$, denote by $H(\alpha_1, \ldots, \alpha_r)$ the subspace generated by these vectors. For convenience, let $H(\alpha_1, \ldots, \alpha_r)_{\ge 0}$ (or $H(\alpha_1, \ldots, \alpha_r)_+$) denote the subset of $H(\alpha_1, \ldots, \alpha_r)$ of those vectors that can be written as a non-negative (resp. positive) linear combination of $\alpha_1, \ldots, \alpha_r$. If $E=(\alpha_1, \ldots, \alpha_r)$ is a Schur sequence, then we will simply write $H(E)$ for $H(\alpha_1, \ldots, \alpha_r)$.

\medskip

If $E=(\alpha_1, \alpha_2, \ldots, \alpha_r)$ is a Schur sequence, then we may consider the corresponding \emph{normalized Schur sequence} $\ch E = (\check{\alpha_1}, \ldots, \check{\alpha_r})$. The following lemma gives the relationship between Schur sequences and normalized Schur sequences. Recall that for a nonzero dimension vector $d$, $s(d)$ denotes the sum of the entries of $d$.

\begin{Lemma} \label{NormalizedToNot}
Fix $d$ a dimension vector and $E=(\alpha_1, \ldots, \alpha_r)$ a Schur sequence with $d \in H(E)$. Write $d = \sum_{i=1}^rp_i\alpha_i$. Then $\check{d} \in H(\ch E)$ with $\ch d = \sum_{i=1}^r\frac{p_i s(\alpha_i)}{s(d)}\ch \alpha_i$.
\end{Lemma}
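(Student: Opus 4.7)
The statement is essentially a direct unpacking of the definition of the normalization operator $\ch{}$, so the plan is to reduce everything to the identity $\alpha = s(\alpha)\ch\alpha$, which holds for any positive vector $\alpha$, and exploit linearity of the entry-sum map $s$.

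First I would record that each $\alpha_i$ is a positive vector (it is a Schur root, hence lies in $(\Z_{\ge 0})^n\setminus\{0\}$), so $s(\alpha_i)>0$ and $\alpha_i = s(\alpha_i)\ch\alpha_i$ is well defined; likewise $s(d)>0$ since $d$ is a dimension vector. Substituting into $d=\sum_{i=1}^r p_i\alpha_i$ gives
\[
d \;=\; \sum_{i=1}^r p_i s(\alpha_i)\,\ch\alpha_i.
\]
Applying the linear functional $s$ to both sides, and using $s(\ch\alpha_i)=1$, yields $s(d)=\sum_{i=1}^r p_i s(\alpha_i)$. (In particular, this consistency check forces the coefficients on the right to be compatible with normalization.)

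Next I would divide the displayed equation by $s(d)$, obtaining
\[
\ch d \;=\; \frac{1}{s(d)}\,d \;=\; \sum_{i=1}^r \frac{p_i s(\alpha_i)}{s(d)}\,\ch\alpha_i,
\]
which is the claimed expression. Since each coefficient $\tfrac{p_i s(\alpha_i)}{s(d)}$ is a real number, this exhibits $\ch d$ as a linear combination of the elements of $\ch E$, so $\ch d \in H(\ch E)$. (Moreover, if all $p_i$ are positive, the coefficients are positive and, by the computation of $s(d)$ above, sum to $1$, so $\ch d$ is a convex combination of the $\ch\alpha_i$ — a useful observation for later sections, though not required by the statement.)

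There is no real obstacle here; the only thing to be careful about is that the Schur sequence hypothesis is used only to guarantee that the $\alpha_i$ are positive vectors so that $\ch\alpha_i$ is defined, and that $H(E)=H(\ch E)$ as subspaces of $\mathbb{R}^n$ (each $\ch\alpha_i$ is a positive scalar multiple of $\alpha_i$), so membership of $\ch d$ in $H(\ch E)$ is automatic from $d\in H(E)$.
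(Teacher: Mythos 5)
Your proof is correct, and in fact the paper states this lemma without any proof at all, evidently regarding it as an immediate consequence of the definitions. Your computation — rewriting $\alpha_i = s(\alpha_i)\ch\alpha_i$, substituting into $d = \sum_i p_i\alpha_i$, and dividing by $s(d)$ — is exactly the routine verification the author left to the reader, and your observations that $s(\ch\alpha_i)=1$ and that $H(E)=H(\ch E)$ (since each $\ch\alpha_i$ is a positive scalar multiple of $\alpha_i$) correctly account for the ``$\ch d\in H(\ch E)$'' clause.
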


\medskip

Now, let $E=(\alpha_1, \ldots, \alpha_r)$ be a Schur sequence. Suppose that we have a dimension vector $d$ with $d \in H(E)$. Then the expression
$d = \sum_{i=1}^rp_i\alpha_i$ of $d$ as a linear combination of the $\alpha_i$ becomes
$$\ch d = \frac{p_1 s(\alpha_1)}{s(d)}\ch\alpha_1 + \cdots + \frac{p_r s(\alpha_r)}{s(d)}\ch\alpha_r$$
in $\Delta(1)$. Consider the function $$f(E,\alpha_i): \Delta_Q \to [0,1]$$ which associates to an arbitrary ray $[d]$ of $\Delta_Q$ the coefficient $\frac{p_i s(\alpha_i)}{s(d)}$ as above. Then one can check that $f(E,\alpha_i)$ is well defined and is continuous.
Before proving the main theorem of this section, we need the following easy lemma.

\begin{Lemma} \label{LemmaInsideQuadric}
Let $d$ be a dimension vector with $\langle d,d \rangle < 0$. Then $d$ has a strictly imaginary root in its canonical decomposition.
\end{Lemma}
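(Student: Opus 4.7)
The plan is a direct computation using the Kac/Schofield characterization of the canonical decomposition recalled earlier in the paper. Take a general representation $M \in \rep(Q,d)$, so that $M$ decomposes as $M = N_1 \oplus N_2 \oplus \cdots \oplus N_s$, where each $N_k$ is a Schur representation and the dimension vectors $d_{N_k}$ are precisely the summands of the canonical decomposition $d = p_1\beta_1 \oplus \cdots \oplus p_r\beta_r$ (with the convention that a strictly imaginary $\beta_i$ with coefficient $p_i$ contributes a single summand $N_k$ of dimension $p_i\beta_i$, while a real or isotropic $\beta_i$ contributes $p_i$ isomorphic copies of a Schur representation of dimension $\beta_i$). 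By the characterization of the canonical decomposition, $\ext(d_{N_i}, d_{N_j}) = 0$ for $i \neq j$, and we can further assume, by genericity, that $\dim_k \Ext^1(N_i,N_j) = \ext(d_{N_i},d_{N_j})$ and $\dim_k \Hom(N_i,N_j) = \hom(d_{N_i},d_{N_j})$ for all $i,j$.

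Next I would expand the Euler form along this direct-sum decomposition:
$$\langle d, d \rangle \;=\; \sum_{i,j} \langle d_{N_i}, d_{N_j} \rangle \;=\; \sum_{i \ne j} \hom(d_{N_i}, d_{N_j}) \;+\; \sum_i \langle d_{N_i}, d_{N_i} \rangle,$$
where the cross terms simplify because $\ext(d_{N_i},d_{N_j})=0$ for $i \ne j$, giving $\langle d_{N_i}, d_{N_j} \rangle = \hom(d_{N_i}, d_{N_j}) \ge 0$. So the off-diagonal contribution is non-negative, and the sign of $\langle d, d \rangle$ is controlled by the diagonal terms $\langle d_{N_i}, d_{N_i} \rangle = q(d_{N_i})$.

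Now I would analyze the diagonal terms according to the type of the Schur root $d_{N_i}$: if it is a real Schur root then $q(d_{N_i}) = 1$; if it is an isotropic Schur root then $q(d_{N_i}) = 0$; if it is a strictly imaginary Schur root then $q(d_{N_i}) < 0$. Thus each diagonal term is non-negative except possibly those coming from strictly imaginary summands. If no $\beta_i$ in the canonical decomposition is strictly imaginary, then none of the $d_{N_i}$ is strictly imaginary (since any $p_i\beta_i$ with $\beta_i$ real or isotropic is again real or isotropic, with $q \ge 0$), and we would conclude $\langle d, d \rangle \ge 0$, contradicting the hypothesis $\langle d, d \rangle < 0$. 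Hence the canonical decomposition must contain a strictly imaginary Schur root.

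There is no real obstacle here; the only point that needs a moment of care is the bookkeeping around Schofield's convention, to confirm that a summand $p_i\beta_i$ with $\beta_i$ strictly imaginary is itself a strictly imaginary Schur root (so it contributes a negative diagonal term), while a summand $p_i\beta_i$ with $\beta_i$ real or isotropic contributes $p_i$ diagonal terms each $\ge 0$.
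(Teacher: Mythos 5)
Your proof is correct, and the core computation is the same as the paper's: expand $\langle d,d\rangle$ over the constituents of the canonical decomposition and observe that every term is non-negative once no strictly imaginary root appears. The difference is in what you invoke to get the non-negativity of the cross terms. The paper runs the Derksen--Weyman algorithm and extracts a \emph{final} Schur sequence $(\alpha_1,\dots,\alpha_r)$ for $d$, from which $\langle \alpha_i,\alpha_j\rangle = 0$ for $i<j$ (Schur sequence) and $\langle \alpha_j,\alpha_i\rangle \ge 0$ for $i<j$ (finality), so every cross term is non-negative. You instead go straight to Kac's characterization of the canonical decomposition: $\ext(d_{N_i},d_{N_j})=0$ for $i\ne j$, hence $\langle d_{N_i},d_{N_j}\rangle = \hom(d_{N_i},d_{N_j})\ge 0$. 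Your route is a bit more elementary in that it requires neither (DW) nor any Schur-sequence machinery, only the definition and basic characterization of the canonical decomposition; the paper's version is the natural one given that (DW) is already being used throughout the section. Both are equally valid, and you are right that the only point needing care is the bookkeeping around Schofield's convention: a strictly imaginary $\beta_i$ contributes a single summand with $q<0$, while a real or isotropic $\beta_i$ contributes $p_i$ summands each with $q\ge 0$.
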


\begin{proof}
Assume otherwise. Using the algorithm (DW) in \cite{DWAlgo}, there exists a final Schur sequence $E=(\alpha_1, \ldots, \alpha_r)$ such that $d \in H(E)_+$ and none of the $\alpha_i$ are strictly imaginary. Write $d = \sum_{i=1}^rp_i \alpha_i$. Since $\langle \alpha_i, \alpha_j \rangle\ge 0$ for all $1 \le i,j \le r$, we get $\langle d,d \rangle \ge 0$, a contradiction.
\end{proof}

We are now ready to prove the main result of this section.

\begin{Theo} \label{MainTheorem}
Suppose that a strictly imaginary root appears in the canonical decomposition of a dimension vector $d$. Then there is a small neighborhood $\mathcal{U}$ of $d$ such that the canonical decomposition of any dimension vector in $\mathcal{U}$ also involves a strictly imaginary Schur root.
\end{Theo}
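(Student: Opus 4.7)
My plan is to combine Kac's characterization of the canonical decomposition (the Kac proposition recalled in Section 3) with continuity of $\ext$. First, I would write $d = \mu \oplus d_2 \oplus \cdots \oplus d_r$ with $\mu$ strictly imaginary Schur. Kac gives $\ext(\mu, d_i) = 0 = \ext(d_i, \mu)$ for $i \geq 2$; additivity of $\Ext^1$ on direct sums (the generic $(d-\mu)$-dimensional representation is a direct sum of generic $d_i$-dimensional ones) yields $\ext(\mu, d-\mu) = 0 = \ext(d-\mu, \mu)$.

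Next, I would exploit that $\ext(\mu, v)$ is continuous in $v \in \mathbb{R}^n_{\geq 0}$: by Schofield's formula it equals the pointwise maximum of finitely many linear forms $-\langle \nu, v \rangle$, one for each generic subrepresentation dimension $\nu$ of $\mu$, hence is piecewise linear. It is nonnegative and integer-valued on integer dim vectors, so vanishing at $v = d-\mu$ yields $\ext(\mu, d'-\mu) = 0$ for every integer $d' \geq \mu$ in a Euclidean neighborhood $\mathcal V$ of $d$; similarly $\ext(d'-\mu, \mu) = 0$ on $\mathcal V$.

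For such a $d'$, I would let $d' - \mu = f_1 \oplus \cdots \oplus f_t$ be its canonical decomposition. Additivity of $\Ext^1$ gives $\ext(\mu, d'-\mu) = \sum_i \ext(\mu, f_i)$, so each $\ext(\mu, f_i) = 0$ and similarly $\ext(f_i, \mu) = 0$. Moreover, no $f_i$ can equal $\mu$: since $\mu$ is strictly imaginary, $q(\mu) < 0$ forces $\ext(\mu, \mu) > 0$, contradicting $\ext(\mu, f_i) = 0$. Kac's characterization then identifies $d' = \mu \oplus f_1 \oplus \cdots \oplus f_t$ as the canonical decomposition of $d'$, which visibly contains the strictly imaginary Schur root $\mu$.

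Finally, I would invoke Proposition \ref{CanDecompSchofield} to pass from this Euclidean result to a neighborhood in $\Delta_Q$, using that the property ``strictly imaginary appears in the canonical decomposition'' is invariant under positive scaling of dim vectors, so that for each nearby ray one may select a convenient integer representative. The hard part will be precisely this transfer: normalized closeness of $\check{d'}$ to $\check{d}$ does not immediately produce an integer $d'$ close to $d$ in $\mathbb{R}^n$, and the slopes of $\ext$ scale with the size of $\mu$, so naively scaling up $d$ yields shrinking Euclidean neighborhoods. A careful choice of representatives on nearby rays, or alternatively a direct application of Lemma \ref{LemmaStrictlyInE} to several Schur sequences extending $(\mu, d_2, \ldots, d_r)$ to length $n$ whose positive-combination cones together cover a neighborhood of $d$, should deliver the desired normalized neighborhood $\mathcal{U}$.
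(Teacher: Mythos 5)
Your Euclidean argument is correct as far as it goes: fixing $\mu$ and using that $\ext(\mu,-)$ and $\ext(-,\mu)$ are piecewise-linear, nonnegative, integer-valued on lattice points, you do obtain a genuine Euclidean ball $\mathcal V$ around $d$ in $\mathbb{R}^n$ such that every integer $d'\in\mathcal V$ with $d'\ge\mu$ has $\mu$ as a summand of its canonical decomposition. The gap is exactly the one you flag at the end, and it is fatal to the argument as written: the theorem is about a neighborhood in $\Delta_Q$, whose points are rays, and a neighborhood of $[d]$ in $\Delta_Q$ contains integer dimension vectors of unbounded Euclidean norm (e.g.\ if $d=(1,1)$, the rays of $(n,n+1)$ converge to $[d]$, but the vectors run off to infinity). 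Your Euclidean ball $\mathcal V$ says nothing about those. Proposition \ref{CanDecompSchofield} does not repair this: it lets you pass from $d'$ to $pd'$, but what you would need is the reverse, and a dimension vector on a ray near $[d]$ generally has no small integer multiple landing in $\mathcal V$. More structurally, your criterion (``$\mu$ itself appears'') is not scale-invariant: as $d'$ grows along a ray near $[d]$, the fixed root $\mu$ becomes an ever-smaller fraction of $d'$, and there is no reason for the same $\mu$ to occur; the theorem only asserts that \emph{some} strictly imaginary Schur root occurs.

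The paper avoids this by working with a scale-invariant criterion throughout. It applies (DW) to $d$ until two roots $\alpha_j,\alpha_{j+1}$ are about to be merged into a strictly imaginary root, refines the resulting Schur sequence into one consisting of real Schur roots via Lemma \ref{refinement}, and then extends it (using Lemma \ref{CompleteNonProjective} and its dual) to finitely many length-$n$ Schur sequences $E_1,\dots,E_g$ whose non-negative cones cover a neighborhood of the ray of $d$. On each $E_i$, the coefficient functions $f(E_i,\alpha)$ of Section~4 are continuous on $\Delta_Q$, and the relevant test is that a certain sub-sum has $q<0$ (Lemma \ref{LemmaInsideQuadric}), a condition that is homogeneous and therefore persists on a ray-neighborhood regardless of the size of the integer representative. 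This is what your sketched alternative (``several Schur sequences extending $(\mu,d_2,\dots,d_r)$ whose cones cover a neighborhood'') is gesturing toward, and that route would work, but it requires the refinement and completion lemmas and the sign-of-$q$ criterion in place of your fixed-$\mu$, exact-vanishing-of-$\ext$ criterion. As proposed, the proof does not establish the theorem.
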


\begin{proof}
Apply the algorithm (DW) to $d$, until it yields a Schur sequence of the form $$(\alpha_1, \ldots, \alpha_{j}, \alpha_{j+1}, \ldots, \alpha_r),$$ where none of the roots are strictly imaginary, $\langle \alpha_{j+1}, \alpha_j \rangle < 0$ and $p_j\alpha_j + p_{j+1}\alpha_{j+1}$ is strictly imaginary where $d = p_1\alpha_1 + \cdots + p_r\alpha_r$. By Lemma \ref{refinement}, consider a Schur sequence  $$\mathcal{T} = (\beta_1, \ldots, \beta_s),$$
of real Schur roots where each isotropic Schur root $\alpha_i$ is replaced by two real Schur roots. The subsequence $(\alpha_j, \alpha_{j+1})$ then corresponds to a full subsequence $\mathcal{S}$ of length $2,3$ or $4$ in $\mathcal{T}$.

We have two cases to consider depending on whether $d$ is sincere or not. Suppose first that $d$ is sincere. Using the dual of Lemma \ref{CompleteNonProjective}, and using the fact that a Schur sequence of real Schur roots corresponds to an exceptional sequence, we deduce that there is a Schur sequence of real Schur roots $(\beta_1, \beta_2, \ldots, \beta_s, \ldots, \beta_n)$ such that for $1 \le i \le n-s$, $\beta_{s+i}$ is not the dimension vector of an injective object in $\C(\beta_1, \ldots, \beta_{s+i})$. Observe that any dimension vector $f$ can be written as a linear combination of the $\beta_i$.

\smallskip

We claim that for any $f \in \Z^n$, there exists a Schur sequence of real Schur roots $E_f = (\mu_1, \ldots, \mu_q, \beta_1, \ldots, \beta_s, \gamma_1, \ldots, \gamma_t)$ of length $n$ such that when writing $f$ as a linear combination of the roots of $E_f$, the coefficients in front of the $\mu_i, \gamma_j$ are all non-negative. For $0 \le j \le n-s$, denote by $\C_j$ the category $\C(\beta_1, \ldots, \beta_s, \ldots, \beta_{s+j})$ and by $G_j$ its Grothendieck group, seen as a subgroup of $\Z^n$. More generally, to prove the claim, we will prove by induction on $0 \le j \le n-s$, that for any  $f \in G_j$, there exists a Schur sequence of real Schur roots $E_{j,f}$ of shape $(\mu_{1}, \ldots, \mu_{q}, \beta_1, \ldots, \beta_s, \gamma_{1}, \ldots, \gamma_{t})$ of length $s+j$ in $\C_j$ such that when writing $f$ as a linear combination of the roots of $E_{j,f}$, the coefficients in front of the $\mu_{i}, \gamma_{j}$ are all non-negative. The claim is the particular case where $j = n-s$.
If $j =0$, then the claim holds trivially. So assume that $j > 0$. Let $f \in G_j$ and write $f  = q_{1}\beta_1+ \cdots + q_{s+j}\beta_{s+j}$.
If $q_{s+j} \ge 0$, then $f' = f - q_{s+j}\beta_{s+j}$ is a vector in $G_{j-1}$. By induction, there is a Schur sequence of real Schur roots $E_{j-1,f'} = (\mu_{1}, \ldots, \mu_{q}, \beta_1, \ldots, \beta_s, \gamma_{1}, \ldots, \gamma_{t})$ of length $s+j-1$ in $\C_{j-1}$ such that when writing $f'$ as a linear combination of the roots of $E_{j-1,f'}$, the coefficients in front of the $\mu_{i}, \gamma_{j}$ are all non-negative. Then we set $E_{j,f} = (\mu_{1}, \ldots, \mu_{q}, \beta_1, \ldots, \beta_s, \gamma_{1}, \ldots, \gamma_{t}, \beta_{s+j})$. So assume that $q_{s+j} < 0$. Since $\beta_{s+j}$ is not the dimension vector of an injective object in $\C_j$, $\beta_{s+j}$ and $\tau^{-1}_{\C_j}\beta_{s+j}$ lie on opposite sides of the hyperplane $H(\beta_1, \ldots, \beta_{s+j-1})$.
Therefore, in the expression of $f$ as a linear combination of $\tau^{-1}_{\C_j}\beta_{s+j}, \beta_1, \ldots, \beta_{s+j-1}$, the coefficient $q$ of $\tau^{-1}_{\C_j}\beta_{s+j}$ will be positive. Consider $f' = f - q\tau^{-1}_{\C_j}\beta_{s+j}$ which is a vector in $G_{j-1}$. By induction, there is a Schur sequence of real Schur roots $E_{j-1,f'} = (\mu_1, \ldots, \mu_q, \beta_1, \ldots, \beta_s, \gamma_1, \ldots, \gamma_t)$ of length $s+j-1$ such that when writing $f'$ as a linear combination of the roots of $E_{j-,f'}$, the coefficients in front of the $\mu_i, \gamma_j$ are all non-negative. Then we set $E_{j,f} = (\tau^{-1}_{\C_j}\beta_{s+j}, \mu_1, \ldots, \mu_q, \beta_1, \ldots, \beta_s, \gamma_1, \ldots, \gamma_t)$. This proves our claim.

\smallskip

The proof of the claim also yields a finite set $\{E_1, E_2, \ldots, E_g\}$ of Schur sequences of real Schur roots such that for each $1 \le i \le g$, $E_i$ contains $(\beta_1, \ldots, \beta_s)$ as a full subsequence. Moreover, for each $f \in \Z^n$, there exists $i$ such that we may take $E_f = E_i$. For each $1 \le i \le g$, let $F_i$ be the set of elements $f \in \Z^n$ such that we may take $E_f = E_i$. Of course, $d \in F_i$ for all $1 \le i \le g$ and $\Z^n = \cup_{i=1}^gF_i$. Now, fix $i$ with $1 \le i \le g$ and consider $f \in F_i$. Consider the coefficients $a_1, \ldots a_q, b_1, \ldots, b_s, c_1, \ldots, c_t$ and $a_1', \ldots a_q', b_1', \ldots, b_s', c_1', \ldots, c_t'$ when writing $d, f$ as a linear combination of the roots in $E_i = (\mu_1, \ldots, \mu_q, \beta_1, \ldots, \beta_s, \gamma_1, \ldots, \gamma_t)$, respectively. Consider the full subsequence $\mathcal{S} = (\beta_p, \ldots, \beta_{p+a})$ of length $a=2,3$ or $a=4$ in $(\beta_1, \ldots, \beta_s)$ as constructed above. Observe that $$\sum_{i=1}^ap_{p+i}\beta_{p+i}$$ is strictly imaginary and hence, $$q\left(\sum_{i=1}^ap_{p+i}\beta_{p+i}\right) < 0.$$ There exists a small neighborhood $U_i$ of $d$ in $\Delta_Q$ such that for $f \in F_i \cap U_i$, $$p_1', \ldots, p_s' > 0 \;\; \text{and} \; \; q\left(\sum_{i=1}^ap_{p+i}'\beta_{p+i}\right) < 0.$$ By Lemma \ref{LemmaInsideQuadric}, a strictly imaginary root appears in the canonical decomposition of $\sum_{i=1}^ap_{p+i}'\beta_{p+i}$, and hence, the same happens for $f$. Since $F_i$ is closed under positive integer scaling, we set $\mathcal{U} = \cup_{1 \le i \le g}{\rm Cone}_{\mathbb{R}}(F_i) \cap U_i$ and we see that any dimension vector $f \in \mathcal{U}$ has a strictly imaginary Schur root in its canonical decomposition.

\smallskip

Consider now the case where $d$ is not sincere. Let $E$ be the support of $d$. Let $E^c = Q_0 \backslash E$. For $i \in E^c$, take $V_i$ be the largest submodule of the injective $I_i$ such that the support of $V_i$ is included in $E \cup \{i\}$. Let $\A$ be the thick subcategory of $\rep(Q)$ generated by the simple representations $S_j$ for $j \in E$. We claim that the $V_i$, $i \in E^c$, are the relative simple objects of $\A^\perp$. We first need to show that $V_i \in \A^\perp$ for $i \in E^c$. Fix $i \in E^c$ and $M \in \A$. Consider a morphism $u: M \to V_i$. Since $\A$ is closed under quotients, the image of $u$ lies in $\A$. If $u \ne 0$, then the image of $u$ has $S_i$ as a composition factor, which is a contradiction. Hence, $\Hom(M,V_i)=0$. If $V_i = I_i$, then $\Ext^1(M,V_i)=0$. Otherwise, observe that if $R_0 \to R_1$ is a minimal injective co-presentation of $V_i$, then $R_0 = I_i$ and $R_1$ has a socle whose support lies in $E^c$. In particular, the top of $\tau^{-1}V_i$ has a support in $E^c$. This gives $0 = \Hom(\tau^{-1}V_i, M) \cong \Ext^1(M,V_i)$. This proves that $V_i$ lies in $\A^\perp$. Now, any proper quotient of $V_i$ lies in $\A$ and hence do not lie in $\A^\perp$. This proves our claim. Let $n' = |E|$ and $s' = n - n'$. As in the first part of the proof, there exists a Schur sequence of real Schur roots  $(\beta_1, \beta_2, \ldots, \beta_s, \ldots, \beta_{n'})$ such that for $1 \le i \le n'-s$, $\beta_{s+i}$ is not the dimension vector of an injective object in $\C(\beta_1, \ldots, \beta_{s+i})$. Now, we may order the $V_i$ to get an exceptional sequence $(V_{i_1}, \ldots, V_{i_{s'}})$. Set $\beta_{n'+j}$ be the dimension vector of $V_{i_j}$. Observe that when writing a dimension vector
using the $\beta_i$, the coefficients in front of $\beta_{n'+1}, \ldots, \beta_{n}$ are non-negative. This easily follows from the description of the $V_i$. Now, we can use the argument of the case where $d$ is sincere.
\end{proof}

\section{Accumulation points}

In this section, $Q$ is an acyclic quiver. We study the accumulation points of real Schur roots in $\Delta_Q$. In the first part of this section, we study accumulation points in general. In the second part, we give a complete description of these accumulation points when $Q$ is of weakly hyperbolic type. Recall from Section $1$ that $\Delta_Q$ has a natural topology which is the one inherited from the Euclidean topology on $\Delta(1)$. Recall also that a sequence $(\alpha_i)_{i \ge 1}$ of dimension vectors accumulates in $\Delta_Q$ if the $\alpha_i$ are pairwise distinct as elements in $\Delta_Q$ and the sequence $(\ch \alpha_i)_{i \ge 1}$ converges in $\Delta(1)$.

\medskip

We first need the following easy well known fact. We give a proof for the sake of completeness.

\begin{Prop} \label{HLP}
If $r$ is an accumulation point of real roots, then $\ch r$ lies on the quadric of $Q$, that is, $q(\ch r) = \langle \ch r, \ch r \rangle = 0$.
\end{Prop}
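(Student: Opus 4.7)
The plan is to exploit the scaling behavior of the quadratic form $q$ under normalization, together with the fact that real roots have a uniform value under $q$.

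First I would observe that for any real root $\alpha$, we have $q(\alpha) = 1$. Indeed, $\alpha = w\cdot e_i$ for some $w \in \mathcal{W}_Q$ and some simple root $e_i$; each generating reflection $\mathfrak{S}_j$ preserves the symmetrized form $(-,-)_Q$ (this is a direct computation from the definition $\mathfrak{S}_j(d) = d - (d,e_j)e_j$ together with $(e_j,e_j) = 2$), so the whole Weyl group preserves $q$, and $q(e_i) = \langle e_i, e_i \rangle = 1$. Consequently, for the normalized vector we get
\[
q(\check\alpha) \;=\; \frac{q(\alpha)}{s(\alpha)^2} \;=\; \frac{1}{s(\alpha)^2}.
\]

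Next I would let $(\alpha_i)_{i \ge 1}$ be a sequence of positive real roots whose rays are pairwise distinct and whose normalizations $\check\alpha_i$ converge in $\Delta(1)$ to $\check r$. The key intermediate step is to show that $s(\alpha_i) \to \infty$. This is a finiteness argument: for any fixed bound $N$, there are only finitely many positive integer vectors in $\mathbb{Z}^n$ whose coordinate sum is at most $N$, hence only finitely many corresponding rays in $\Delta_Q$. Since the rays $[\alpha_i]$ are pairwise distinct, at most finitely many of them can have $s(\alpha_i) \le N$, so $s(\alpha_i) \to \infty$.

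Finally, the quadratic form $q$ is a polynomial, hence continuous on $\Delta(1)$, so
\[
q(\check r) \;=\; \lim_{i \to \infty} q(\check\alpha_i) \;=\; \lim_{i \to \infty} \frac{1}{s(\alpha_i)^2} \;=\; 0.
\]
No step here is really delicate; the only point requiring a moment of care is the finiteness argument that forces $s(\alpha_i) \to \infty$, which is essentially combinatorial and follows at once from the pairwise-distinct-rays hypothesis in the definition of an accumulation point.
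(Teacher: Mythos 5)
Your proof is correct and follows essentially the same route as the paper's: compute $q(\check\alpha_i)=1/s(\alpha_i)^2$ using $q(\alpha_i)=1$ for real roots, then observe that the pairwise-distinctness of the rays forces $s(\alpha_i)$ to be unbounded (you establish the slightly stronger $s(\alpha_i)\to\infty$, which is also fine), so the limit of $q(\check\alpha_i)$ must be zero. The only difference is that you explicitly justify why $q(\alpha)=1$ for any real root via Weyl-invariance of the symmetrized form, a step the paper uses implicitly.
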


\begin{proof}
Let $(\alpha_i)_{i \ge 1}$ be a sequence of real roots which is an accumulation point. Thus, we may assume that the $\alpha_i$ are pairwise distinct. We have
$$q(\ch \alpha_i) = \left\langle \frac{\alpha_i}{s(\alpha_i)}, \frac{\alpha_i}{s(\alpha_i)}\right\rangle = \frac{\langle \alpha_i, \alpha_i \rangle}{s^2(\alpha_i)} = \frac{1}{s^2(\alpha_i)}.$$
Hence, we need to show that $(s(\alpha_i))_{i \ge 1}$ is not bounded. If it is bounded, then there is a dimension vector $d$ such that $\alpha_i = d$ for infinitely many $i$, a contradiction.
\end{proof}

In the following lemma, a \emph{preprojective root} is any root of the form $C^{-i}d_{P_j}$ for some $i \ge 0$ and $1 \le j \le n$; and a \emph{preinjective root} is any root of the form $C^{i}d_{I_j}$ for some $i \ge 0$ and $1 \le j \le n$.

\begin{Lemma} \label{Lemmay+}
Let $(\alpha_i)_{i \ge 1}$ be a sequence of pairwise distinct preprojective (or preinjective) roots. Then the sequence converges to $y^-$ ($y^+$, respectively).
\end{Lemma}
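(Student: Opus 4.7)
My plan is to treat the preinjective case; the preprojective case is dual, obtained by replacing $C$ with $C^{-1}$, $I_j$ with $P_j$, and using that $y^-$ is the eigenvector of $C^{-1}$ corresponding to its spectral radius $\lambda_+ = 1/\lambda_-$. First, since $Q$ has only $n$ vertices, the pigeonhole principle lets me pass to a subsequence in which every $\alpha_i$ has the form $C^{n_i} d_{I_j}$ for a common vertex $j$; the pairwise distinctness of the $\alpha_i$ then forces $n_i \to \infty$.

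The main step is a spectral analysis of the Coxeter matrix $C$, whose properties are summarized in \cite{Pena}. In the wild case, $\lambda_+$ is a simple eigenvalue whose modulus strictly exceeds that of every other eigenvalue of $C$, and this yields a $C$-stable decomposition $\mathbb{R}^n = \mathbb{R}\, y^+ \oplus V$ with the spectral radius of $C|_V$ strictly smaller than $\lambda_+$. Writing $d_{I_j} = c\, y^+ + w$ with $w \in V$, I get
$$\frac{C^{n_i} d_{I_j}}{\lambda_+^{n_i}} \;=\; c\, y^+ \;+\; \frac{C^{n_i} w}{\lambda_+^{n_i}} \;\longrightarrow\; c\, y^+.$$
In the Euclidean case, $\lambda_+ = 1$ has algebraic multiplicity two and geometric multiplicity one, and the $2 \times 2$ Jordan block produces linear rather than exponential growth: writing $d_{I_j} = c_1 y^+ + c_2 y' + v'$ in the real Jordan basis (with $(C-I)y' = y^+$ and $v'$ lying in the other generalized eigenspaces, on which $C$ acts by isometries up to bounded error), one computes $C^{n_i} d_{I_j}/n_i \to c_2\, y^+$. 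In both cases, normalizing by $s(-)$ yields $\ch \alpha_i \to \ch y^+$ in $\Delta(1)$, hence $\alpha_i \to y^+$ in $\Delta_Q$.

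The principal obstacle is showing that the coefficient ($c$ or $c_2$) in front of $y^+$ is strictly positive, so that the limit is indeed $y^+$ rather than $-y^+$ or $0$. I would settle this via a Perron-type property of the Coxeter transformation: there is a strictly positive left eigenvector $\phi$ of $C$ for $\lambda_+$, and $c$ equals, up to a fixed positive normalization, $\phi(d_{I_j})$, which is positive because $d_{I_j}$ is a nonzero non-negative vector and $\phi$ is strictly positive. In the Euclidean case, the analogous positivity of $c_2$ is equivalent to $d_{I_j}$ having strictly positive defect, which holds because $I_j$ is preinjective but not regular. These positivity statements are classical in the theory of wild hereditary algebras and are summarized in the reference \cite{Pena} on which the spectral data in Section 1 is already based.
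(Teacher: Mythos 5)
Your Euclidean case is sound (though heavier than needed: the paper simply invokes Proposition~\ref{HLP} and the fact that the quadric meets the positive orthant of $\Delta(1)$ in a single ray, the ray of $\delta$, avoiding the Jordan analysis entirely). Your wild case, however, has a genuine gap exactly at what you call ``the principal obstacle.'' You claim there is a \emph{strictly positive} left eigenvector $\phi$ of $C$ for $\lambda_+$, and deduce $c=\phi(d_{I_j})/\phi(y^+)>0$ from non-negativity of $d_{I_j}$. But such a strictly positive left eigenvector need not exist: $C$ is not a non-negative matrix, and the full Perron--Frobenius package does not apply to it. Already for the generalized Kronecker quiver with $m\ge 3$ arrows $1\to 2$ one has
$$C=\begin{pmatrix} m^2-1 & -m \\ m & -1 \end{pmatrix},\qquad \lambda_\pm=\tfrac{1}{2}\bigl((m^2-2)\pm m\sqrt{m^2-4}\bigr),$$
and the left eigenvector for $\lambda_+$ is proportional to $\bigl(m,\,-(m^2-1-\lambda_+)\bigr)$, whose second entry is strictly negative. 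Indeed, the left eigenvector for $\lambda_+$ is (up to scalar) the row vector $(y^-)^T E$, i.e.\ the functional $d\mapsto\langle y^-,d\rangle$, and this functional is \emph{not} non-negative on the whole positive orthant --- it is negative on $d_{P_2}=(0,1)$ in the example above. The true positivity statement, which is the one the paper uses, is the much finer result (de la Pe\~na, Theorem in 3.5 of the cited reference) that $\langle y^-,d_M\rangle>0$ when $M$ is preinjective or regular; this is exactly what makes $\lambda_X>0$ in the displayed limit $\lim_m \rho(C^{-1})^{-m} d_{\tau^{-m}X}=\lambda_X y^-$, and it does not follow from positivity of $d_M$ alone. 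So either cite de la Pe\~na's convergence theorem directly, as the paper does, or cite the positivity of $\langle y^-, -\rangle$ on preinjectives; but the Perron-type argument via a positive left eigenvector does not work.
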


\begin{proof} We only prove the preprojective case, the other being dual. Here $Q$ is necessarily of Euclidean type or of wild type. Suppose first that $Q$ is of wild type. Recall (or see \cite{Campo, Ringel}) that the spectral radius of $C^{-1}$ is $\rho(C^{-1}) = \lambda_+ > 1$ and is an eigenvalue of $C^{-1}$ with geometric multiplicity one. Moreover, there is a corresponding eigenvector $y^-$ which is strictly positive. Let $X$ be a preprojective representation. By \cite[Theorem in 3.5]{Pena}, we have
$$\lim_{m \to \infty} \frac{1}{\rho(C^{-1})^m} d_{\tau^{-m} X} = \lambda_X y^-,$$
where $\lambda_X > 0$. Hence in $\Delta_Q$, the sequence $(\alpha_1,\alpha_2,\ldots)$ converges to $y^-$.
Suppose now that $Q$ is Euclidean.  Then the null root $\delta$ is an eigenvector of $C^{-1}$ whose corresponding eigenvalue is one.  We claim that the sequence converges to $\delta$. From Proposition \ref{HLP}, any infinite sequence $(\beta_1, \beta_2, \ldots)$ of pairwise distinct real Schur roots accumulates to a ray on the quadric $\langle x, x \rangle = 0$, which contains only one ray, namely the ray of $\delta$.  This proves the last case.
\end{proof}

By the above lemma, the special eigenvectors $y^-, y^+$ are accumulation points of real Schur roots. The following result can be found in \cite[Theo. 3.5]{Pena}.

\begin{Lemma} \label{LemmaGrowthTau}
Let $X$ be an exceptional representation. If $X$ is not preinjective, then the sequence $(d_{\tau^{-i}X})_{i \ge 0}$ converges to $y^-$ in $\Delta_Q$. If $X$ is not preprojective, then the sequence $(d_{\tau^{i}X})_{i \ge 0}$ converges to $y^+$ in $\Delta_Q$.
\end{Lemma}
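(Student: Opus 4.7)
The plan is to bootstrap both halves from Pena's \cite[Theorem in 3.5]{Pena} and the companion Lemma~\ref{Lemmay+}. By the duality exchanging $\tau\leftrightarrow\tau^{-1}$, preprojective $\leftrightarrow$ preinjective, and $y^-\leftrightarrow y^+$ (via the anti-equivalence $\rep(Q)\simeq\rep(Q^{\mathrm{op}})^{\mathrm{op}}$), it suffices to prove the first half: if $X$ is not preinjective, then $(d_{\tau^{-i}X})_{i\ge 0}$ converges to $y^-$. The Dynkin case is vacuous, since every indecomposable there is both preprojective and preinjective, so the hypothesis ``not preinjective'' is never satisfied.

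Assume $Q$ is wild and $X$ is exceptional, not preinjective. Then $\tau^{-m}X$ is a nonzero indecomposable for every $m\ge 0$, and the $\tau^{-1}$-orbit of $X$ contains infinitely many non-isomorphic objects. Setting $v_m:=d_{\tau^{-m}X}=C^{-m}d_X$, Pena's theorem asserts
\[
\lambda_+^{-m}\,v_m \;\longrightarrow\; \mu_X\, y^-
\]
for some $\mu_X>0$, where $\lambda_+=\rho(C^{-1})>1$. Since $y^-$ has strictly positive entries, so does $\mu_X y^-$, and the normalization map $v\mapsto\check v=v/s(v)$ is continuous on a neighbourhood of $\mu_X y^-$ inside the strictly positive orthant. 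From the identity
\[
\check v_m \;=\; \frac{\lambda_+^{-m}\,v_m}{s(\lambda_+^{-m}\,v_m)},
\]
continuity then yields convergence of the normalized vectors $\check v_m$ in $\Delta(1)$ to the normalization of $y^-$, which is exactly convergence to $y^-$ in $\Delta_Q$.

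If instead $Q$ is Euclidean and $X$ is preprojective, the $\tau^{-i}X$, $i\ge 0$, are pairwise non-isomorphic preprojective indecomposables, so $(v_m)_{m\ge 0}$ is an infinite sequence of pairwise distinct preprojective real Schur roots; Lemma~\ref{Lemmay+} then gives $v_m\to y^-=\delta$ in $\Delta_Q$.

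There is no real obstacle beyond invoking Pena's theorem. The only mildly subtle step is converting his normalization by $\lambda_+^m$ (natural for the spectral analysis of $C^{-1}$) into the normalization by $s(\cdot)$ (natural for $\Delta_Q$); this is handled by the continuity argument above, using strict positivity of $\mu_X y^-$.
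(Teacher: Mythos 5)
The paper does not actually supply a proof of this lemma; it simply attributes it to Pe\~na's \cite[Theorem in 3.5]{Pena}. Your write-up fills in the details in the same spirit, and the normalization step --- passing from Pe\~na's scaling by $\lambda_+^{m}$ to the $s(\cdot)$-normalization used to define $\Delta(1)$, justified by continuity of $v\mapsto v/s(v)$ near the strictly positive vector $\mu_X y^-$ --- is a correct and worthwhile observation. The Dynkin case is indeed vacuous.

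There is, however, a gap in your case analysis of the Euclidean case, and it in fact exposes a latent issue in the lemma's statement. You assume that, when $Q$ is Euclidean, a non-preinjective exceptional $X$ must be preprojective and then invoke Lemma~\ref{Lemmay+}. But an exceptional $X$ could also be regular: every Euclidean quiver other than the Kronecker quiver has non-homogeneous tubes of some rank $p\ge 2$, and the quasi-simples (and, more generally, the objects of quasi-length $< p$) in such a tube are exceptional regular representations. For such an $X$, the $\tau^{-1}$-orbit is periodic of period $p$, so the sequence $(d_{\tau^{-i}X})_{i\ge 0}$ takes $p$ distinct values cyclically and its normalization does not converge in $\Delta(1)$. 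Hence the conclusion of the lemma fails for regular exceptional $X$ over a Euclidean quiver. The lemma must therefore be read with $Q$ wild (which is how it is used later in the paper, and matches the hypotheses of Pe\~na's theorem); your proof should either add this hypothesis explicitly or handle the Euclidean case only for $X$ preprojective and note that the remaining regular case is a genuine exception.
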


The following proposition is essentially a consequence of Lemma \ref{refinement}.

\begin{Prop} \label{PropAccuPoints}If $d$ is an isotropic Schur root, then $d$ an isotropic Schur root of a finitely generated rank two tame subcategory of $\rep(Q)$.
\end{Prop}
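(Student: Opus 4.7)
The plan is to apply Lemma \ref{refinement} to the length-one Schur sequence $(d)$. This sequence is indeed a Schur sequence for $d$ (any Schur root on its own is), it contains no strictly imaginary root (since $d$ is isotropic), and the minimal position of an isotropic entry is $i=1$. The lemma then produces two real Schur roots $\beta,\gamma$ with $d=\beta+\gamma$ such that $(\beta,\gamma)$ is a Schur sequence. In particular $\beta\perp\gamma$, so $\langle\beta,\gamma\rangle=0$, and since $\beta,\gamma$ are exceptional we have $\langle\beta,\beta\rangle=\langle\gamma,\gamma\rangle=1$.

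Next, I would consider the thick subcategory $\C(\beta,\gamma)$ generated by the corresponding exceptional pair. This is a finitely generated rank-two thick subcategory of $\rep(Q)$, equivalent to $\rep(Q')$ for some acyclic two-vertex quiver $Q'$ (a standard fact about exceptional pairs in a hereditary category, already used tacitly in Section~3 via the notation $\C(\mathcal{S})$). The restriction of the Euler form of $Q$ to $\Z\beta\oplus\Z\gamma$ is the Euler form of $Q'$, so $q_{Q'}(d)=q_Q(d)=0$.

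I would then identify $Q'$. Expanding $0=q(d)=q(\beta)+q(\gamma)+\langle\beta,\gamma\rangle+\langle\gamma,\beta\rangle=2+\langle\gamma,\beta\rangle$ forces $\langle\gamma,\beta\rangle=-2$. Any two-vertex acyclic quiver whose Euler form has this shape is (up to the orientation convention) the Kronecker quiver, which is of Euclidean type $\widetilde{\mathbb{A}}_1$ and hence tame; equivalently, a two-vertex acyclic quiver with $m\ge 3$ arrows is wild and its quadratic form $x^2-mxy+y^2$ admits no nonzero rational isotropic vector, while $m\le 1$ gives no imaginary roots at all, so $m=2$ is forced by $q_{Q'}(d)=0$. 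In the basis $(\beta,\gamma)$, the vector $d=\beta+\gamma$ has coordinates $(1,1)$, which is precisely the null root of the Kronecker quiver and is therefore an isotropic Schur root of $\C(\beta,\gamma)$. This proves the statement.

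The substantive step is Lemma \ref{refinement}, which is already in hand; after that the argument is just a two-vertex classification. The only mild subtlety is the bookkeeping that the restriction of $\langle-,-\rangle$ along $\C(\beta,\gamma)\hookrightarrow\rep(Q)$ agrees with the Euler form of the small quiver $Q'$, and that being an isotropic Schur root for $Q'$ matches the claim of the proposition — both standard properties of thick subcategories generated by exceptional sequences in the hereditary setting.
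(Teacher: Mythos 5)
Your argument is correct and matches the route the paper indicates: the paper states only that the proposition ``is essentially a consequence of Lemma~\ref{refinement}'' and supplies no further details, and you have filled those details in exactly as intended — apply the lemma to the Schur sequence $(d)$, obtain $d=\beta+\gamma$ with $(\beta,\gamma)$ a Schur sequence of real Schur roots, compute $\langle\gamma,\beta\rangle=-2$ from $q(d)=0$, and recognize $\C(\beta,\gamma)$ as a copy of the Kronecker category with $d$ as its null root.
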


We now have the following theorem, which gives a description of the rational accumulation points of real Schur roots.

\begin{Theo} \label{TheoAccuPoints} If $d$ is an isotropic Schur root, then $d$ is a rational accumulation point of real Schur roots. Conversely, if $d$ is a rational accumulation point of real Schur roots, then the canonical decomposition of $d$ only involves pairwise orthogonal isotropic Schur roots.
\end{Theo}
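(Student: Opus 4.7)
The plan is to handle the two directions separately, relying on Proposition \ref{PropAccuPoints} for the ``if'' direction and on Theorem \ref{MainTheorem} for the ``only if'' direction. The forward direction is essentially a reduction to the Kronecker quiver.

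\medskip

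\textbf{Forward direction.} Given an isotropic Schur root $d$, I would invoke Proposition \ref{PropAccuPoints} to produce a finitely generated rank two tame subcategory $\C \subseteq \rep(Q)$ in which $d$ is an isotropic Schur root. The category $\C$ is the thick subcategory generated by an exceptional pair $(\gamma_1,\gamma_2)$ of real Schur roots of $Q$ forming a tame Kronecker-type configuration, and $\C$ is equivalent to $\rep(K)$ for the Kronecker quiver $K$, with $d=\gamma_1+\gamma_2$ (as vectors in $\Z^n$). Since the indecomposable objects of $\C$ are indecomposable in $\rep(Q)$, and exceptional objects of $\C$ are exceptional in $\rep(Q)$, the preprojective real Schur roots of $\C$ form an infinite sequence of real Schur roots of $Q$. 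By Lemma \ref{Lemmay+} applied inside $\C$, this sequence converges (in the $\Delta_\C$-topology, hence in $\Delta_Q$) to the null root of $\C$, which is $d$. Thus $d$ is a rational accumulation point of real Schur roots of $Q$.

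\medskip

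\textbf{Converse direction.} Let $d$ be a rational accumulation point of real Schur roots. First, Proposition \ref{HLP} gives $q(\ch d)=0$, hence $q(d)=0$ by homogeneity. Next, I would rule out strictly imaginary summands in the canonical decomposition of $d$: if such a summand existed, Theorem \ref{MainTheorem} would supply a neighborhood $\mathcal{U}$ of $d$ in $\Delta_Q$ such that every dimension vector in $\mathcal{U}$ carries a strictly imaginary Schur root in its canonical decomposition. But a real Schur root is its own canonical decomposition, so no real Schur root lies in $\mathcal{U}$, contradicting the accumulation hypothesis.

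\medskip

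Hence the canonical decomposition has the form $d=p_1\beta_1+\cdots+p_r\beta_r$ with each $\beta_i$ real or isotropic and each $p_i>0$. Expanding,
\[
0=\langle d,d\rangle = \sum_{i=1}^r p_i^2\langle \beta_i,\beta_i\rangle + \sum_{i\ne j} p_ip_j\langle \beta_i,\beta_j\rangle .
\]
Each diagonal term is non-negative: $\langle \beta_i,\beta_i\rangle=1$ if $\beta_i$ is real and $=0$ if $\beta_i$ is isotropic. Each off-diagonal term is non-negative as well, because the canonical decomposition forces $\ext(\beta_i,\beta_j)=0$ for $i\ne j$, and therefore $\langle \beta_i,\beta_j\rangle=\hom(\beta_i,\beta_j)\ge 0$. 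A sum of non-negative terms that vanishes must vanish term by term, so no $\beta_i$ is real and $\langle \beta_i,\beta_j\rangle=0$ for all $i\ne j$. The symmetrised form then satisfies $(\beta_i,\beta_j)=\langle \beta_i,\beta_j\rangle+\langle \beta_j,\beta_i\rangle=0$ for $i\ne j$, yielding the claimed pairwise orthogonality.

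\medskip

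\textbf{Main obstacle.} The only delicate point is the elimination of strictly imaginary summands, which is where Theorem \ref{MainTheorem} does the essential work; without that theorem one would have no way to connect the local topological information (accumulation by reals) to the algebraic structure of the canonical decomposition. The remaining steps are a short bilinear computation and an off-the-shelf application of Proposition \ref{PropAccuPoints} together with the rank-two case of Lemma \ref{Lemmay+}.
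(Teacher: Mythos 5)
Your proof is correct and follows essentially the same route as the paper: Proposition \ref{PropAccuPoints} plus the Kronecker quiver/Lemma \ref{Lemmay+} for the forward direction, then Proposition \ref{HLP}, Theorem \ref{MainTheorem}, and a non-negativity-of-terms expansion of $\langle d,d\rangle$ for the converse. The one small variation is that you obtain the non-negativity of the off-diagonal terms directly from Kac's characterization of the canonical decomposition ($\ext(\beta_i,\beta_j)=0$ for $i\neq j$, hence $\langle\beta_i,\beta_j\rangle=\hom(\beta_i,\beta_j)\ge0$), whereas the paper works with a final Schur sequence output by the Derksen--Weyman algorithm (so $\langle\alpha_i,\alpha_j\rangle=0$ for $i<j$ and $\langle\alpha_j,\alpha_i\rangle\ge0$); both yield the same term-by-term vanishing, and yours is arguably a touch cleaner since it avoids invoking the DW machinery for a step where Kac's proposition already suffices.
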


\begin{proof}The first part follows from Proposition \ref{PropAccuPoints} and the fact that the result holds for the Kronecker quiver. For the second part, assume that $d$ is a dimension vector that is an accumulation point of real Schur root. It follows from Proposition \ref{HLP} that $\langle d, d \rangle =0$. Let $(\alpha_1, \ldots, \alpha_n)$ be a Schur sequence which is the output of (DW) applied to $d$. Then the $\alpha_i$ are linearly independent Schur roots with $\langle \alpha_i, \alpha_j \rangle = 0$ and  $\langle \alpha_j, \alpha_i \rangle \ge 0$ whenever $i < j$. Since $d$ is an accumulation point, it follows from Theorem \ref{MainTheorem} that for each $i$, $\alpha_i$ is real or isotropic. In particular, $\langle \alpha_i, \alpha_i \rangle \ge 0$ for all $i$. Now, we have $d = \sum_{i=1}^rp_i\alpha_i$ with $p_i > 0$ for all $i$. The equation $\langle d, d \rangle =0$ together with the previous conditions give
$$0=\langle d, d \rangle = \sum_{i=1}^rp_i^2\langle \alpha_i, \alpha_i \rangle + \sum_{1 \le i < j \le r}p_ip_j\langle \alpha_j, \alpha_i \rangle.$$
Since all terms on the right hand side are non-negative, it follows that they all vanish. In particular, $\langle \alpha_i, \alpha_j \rangle = 0$ for all $1 \le i,j \le r$. This proves the second part.
\end{proof}

The following describes the rational accumulation points in case $Q$ is at most weakly hyperbolic. It follows from Theorems \ref{TheoAccuPoints} and \ref{MainTheorem}.

\begin{Cor} Let $Q$ be at most weakly hyperbolic. Then the rational accumulation points are precisely the isotropic Schur roots.
\end{Cor}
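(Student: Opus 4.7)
The plan is to assemble this as a direct corollary of the two named theorems together with the Corollary following Lemma \ref{LemmaWeaklyHyper}. No new structural input is needed; the argument is entirely a matter of chasing the consequences of being at most weakly hyperbolic through the canonical decomposition.

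First I would dispatch the easy direction. If $\delta$ is an isotropic Schur root, then the first half of Theorem \ref{TheoAccuPoints} (which does not use the weakly hyperbolic hypothesis) immediately says that $\delta$ is a rational accumulation point of real Schur roots. This handles the ``$\Leftarrow$'' inclusion.

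For the converse, let $d$ be a rational accumulation point of real Schur roots, chosen (as we may, since the ray is rational) as a genuine dimension vector in $\mathbb{Z}^n$. By the second half of Theorem \ref{TheoAccuPoints}, the canonical decomposition of $d$ involves only pairwise orthogonal isotropic Schur roots, so I may write
$$d = p_1 \delta_1 + \cdots + p_k \delta_k$$
with each $\delta_i$ an isotropic Schur root and $\langle \delta_i, \delta_j \rangle = \langle \delta_j, \delta_i \rangle = 0$ for $i\ne j$; in particular $(\delta_i, \delta_j)_Q = 0$. After suitable reordering, the distinct $\delta_i$ form a final Schur sequence, hence a Schur sequence, by the Derksen--Weyman proposition in Section 3. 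The key step is now to apply the Corollary following Lemma \ref{LemmaWeaklyHyper}: since $Q$ is at most weakly hyperbolic, any two imaginary roots occurring in a common Schur sequence satisfy $(\mu,\nu)_Q < 0$. Combined with the orthogonality $(\delta_i,\delta_j)_Q=0$ forced above, this forbids $k\ge 2$. Therefore $k=1$, so $d = p_1\delta_1$, and the ray $[d]$ in $\Delta_Q$ coincides with the ray of the isotropic Schur root $\delta_1$.

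There is no real obstacle here; the only thing worth flagging is the implicit use of Theorem \ref{MainTheorem}, which is what guarantees (via the proof of Theorem \ref{TheoAccuPoints}) that no strictly imaginary Schur root can sneak into the canonical decomposition of a rational accumulation point -- without that input, one could not conclude that every summand $\delta_i$ is isotropic, and the final orthogonality argument with the Corollary to Lemma \ref{LemmaWeaklyHyper} would not apply.
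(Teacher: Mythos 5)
Your proof is correct, and it follows essentially the route the paper has in mind: first half of Theorem~\ref{TheoAccuPoints} for ``$\Leftarrow$'', second half of Theorem~\ref{TheoAccuPoints} (whose proof already leans on Theorem~\ref{MainTheorem}) to get that the canonical decomposition of a rational accumulation point consists of pairwise orthogonal isotropic Schur roots, and then the at-most-weakly-hyperbolic hypothesis to force a single summand. The one place you add genuine value is in making the last step explicit: the paper's one-line attribution to Theorems~\ref{TheoAccuPoints} and~\ref{MainTheorem} does not name the weakly hyperbolic input, and your appeal to the Corollary following Lemma~\ref{LemmaWeaklyHyper} (two imaginary roots in a common Schur sequence must satisfy $(\mu,\nu)<0$) is exactly the right way to rule out $k\ge 2$ distinct isotropic summands. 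An equivalent and slightly more elementary route to that same conclusion is Lemma~\ref{TrivialLemma} together with Lemma~\ref{ConstructionSchurSequences3}: two orthogonal isotropic vectors for an at-most-weakly-hyperbolic $Q$ must be proportional, while distinct roots in a Schur sequence are linearly independent. Either closing argument is fine; both live in the paper and both depend on the same underlying convexity fact about the quadric.
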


\medskip

Now, let $Q$ be any acyclic connected quiver of wild type. It is clear, using the Coxeter transformation together with Theorem \ref{TheoAccuPoints}, that there are infinitely many rational accumulation points of real Schur roots if and only if there is at least one isotropic Schur root for $Q$. Not all connected quivers of wild type have isotropic Schur roots. For irrational accumulation points of real Schur roots, we clearly have the special eigenvectors $y^-, y^+$ of the Coxeter matrix $C$. This is clear that they are irrational, because the determinant of $C$ is $\pm 1$, and we know that the special eigenvalues of $C$ are positive and different from one. If $Q$ has more than 2 vertices, we will show that there are infinitely many irrational accumulation points.  Let Acc$(Q)$ denote the set of all accumulation points of real Schur roots in $\Delta_Q$. Let Acc$_2(Q)$ denote the set of all accumulation points of the finitely generated rank two wild subcategories of $\rep(Q)$. The rest of this section is devoted to proving that Acc$(Q) = \overline{{\rm Acc}_2(Q)}$ when $Q$ is weakly hyperbolic. The tools developed work for general acyclic quivers but the proof of the main theorem only works when $Q$ is of weakly hyperbolic type.

\medskip

Recall that if $v$ is a sink or a source vertex in $Q$, then the \emph{reflection} at $v$ of $Q$ is the quiver $Q'$ obtained from $Q$ by reversing all the arrows attached to $v$.

\begin{Lemma} \label{LemmaRemoveVertex}
Let $Q$ be connected of wild type with at least three vertices. Then there exists a quiver $Q'$ which is obtained by a sequence of reflections to $Q$ having the following property. There exists a sink vertex $v$ in $Q'$ such that the quiver $Q'\backslash\{v\}$ has a connected component of tame or wild representation type.
\end{Lemma}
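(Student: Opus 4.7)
The plan is to reduce the lemma to a purely combinatorial statement about the underlying graph of $Q$, and then establish that statement by case analysis. First I would observe that reflections at sinks or sources never change the underlying graph of $Q$, so for any vertex $v$ the subquivers $Q\setminus\{v\}$ and $Q'\setminus\{v\}$ share the same underlying graph and hence the same representation type. Moreover, given any vertex $v$ of an acyclic quiver, I can produce a sequence of reflections making $v$ a sink: choose a topological ordering ending at $v$ and reflect successively until the orientation matches that order. Thus it suffices to prove that, for every connected wild quiver $Q$ on $n\ge 3$ vertices, there exists a vertex $v$ such that the full subquiver $Q\setminus\{v\}$ has a connected component which is not Dynkin.

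I would prove this reduced claim by contradiction: suppose every full $(n-1)$-subquiver of $Q$ is a disjoint union of Dynkin quivers. I would split on the underlying graph $G$ of $Q$. If $G$ has a multi-edge between two vertices $i, j$, the subquiver on $\{i, j\}$ is already Euclidean or wild, and any $(n-1)$-subquiver containing both $i$ and $j$ (which exists since $n \ge 3$) inherits this non-Dynkin component, a contradiction. So assume $G$ is simple. If $G$ contains a cycle, let $\ell$ be its girth: the induced subgraph on such a cycle is of type $\tilde A_{\ell - 1}$, which, if $\ell < n$, extends to a proper $(n-1)$-subquiver contradicting the assumption, while $\ell = n$ forces $Q$ itself to equal $\tilde A_{n-1}$, which is Euclidean and not wild. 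Otherwise $G$ is a simply-laced tree, and I would dissect by the maximum degree $\Delta$: when $\Delta \le 2$, $Q$ is the path $A_n$; when $\Delta \ge 4$, the induced subgraph $K_{1,4} = \tilde D_4$ produces a proper Euclidean subquiver unless $Q = \tilde D_4$; and when $\Delta = 3$ the tree has trivalent vertices, where the classification of Dynkin and Euclidean $T$-shaped diagrams forces any wild tree to contain one of $\tilde E_6$, $\tilde E_7$, $\tilde E_8$, or $\tilde D_k$ as a proper induced subgraph (by shortening the longest arm if there is a single trivalent vertex, or by taking the path between two trivalent vertices together with their extra leaves if there are at least two).

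Having found $v$ with $Q\setminus\{v\}$ carrying a non-Dynkin component, I would finish by invoking the first step: reflect $Q$ to make $v$ a sink, producing the desired $Q'$.

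The main obstacle is the tree case with $\Delta = 3$, which depends on the classical classification of simply-laced Dynkin and Euclidean diagrams and requires checking that between any wild tree-shape and the Dynkin shapes sits at least one Euclidean shape. A more conceptual alternative is to combine Cauchy interlacing with Proposition \ref{LemmaPlane}: the assumption that every $(n-1)\times(n-1)$ principal submatrix of $A_Q$ is positive definite forces $A_Q$ to have at most one non-positive eigenvalue, placing $Q$ among Dynkin, Euclidean, or weakly hyperbolic types, and excluding the last alternative still reduces to the same combinatorial inspection of connected graphs that admit no Euclidean proper subdiagram.
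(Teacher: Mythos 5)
Your proof is correct and takes a somewhat cleaner route than the paper's. The key structural difference: you isolate and use the general fact that \emph{any} vertex $v$ of an acyclic quiver can be turned into a sink by a sequence of sink reflections, which reduces the lemma to the purely combinatorial claim that some $Q\setminus\{v\}$ has a non-Dynkin component; you then prove that combinatorial claim by a complete case analysis (multi-edge, short cycle, tree by maximal degree). The paper instead treats trees by citing that any orientation of a tree is reachable by reflections (and then asserts without proof that some $Q\setminus\{v\}$ is of infinite type), and treats non-trees by a hands-on argument that locates a particular source $u$ to reflect once, intertwining the combinatorics with the reflection step. Your uniform reduction avoids that intertwining and supplies the combinatorial fact that the paper's tree case leaves implicit, so it is arguably more self-contained; what it costs is the Dynkin/Euclidean tree classification case-check, which is standard but must be verified (as you do, correctly, including the observation that shortening the longest arm of a wild $T(a,b,c)$ cannot jump over the Euclidean shapes to land on a Dynkin one).

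One small imprecision worth fixing: the phrase ``choose a topological ordering ending at $v$'' does not quite make sense for an arbitrary vertex, since in a topological order the terminal vertex must be a source (or a sink, depending on the convention). The correct statement of the fact you need is: take an admissible sequence of sinks $v_1,\dots,v_n$ for $Q$ (so every arrow points from a higher index to a lower index), let $k$ be the position of $v$, and reflect successively at $v_1,\dots,v_{k-1}$; a short check shows that $v=v_k$ is a sink in the resulting quiver, and moreover none of these reflections alters the underlying graph of $Q\setminus\{v\}$, hence not its representation type. With that rephrasing, the argument is airtight.
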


\begin{proof} If $Q$ is a tree, then any quiver $Q'$ whose underlying graph is the same as that of $Q$ can be obtained by a sequence of reflections applied to $Q$. Then the result follows by using the fact that there exists a vertex $v$ in $Q$ such that $Q\backslash \{v\}$ is of infinite representation type. So assume $Q$ has a (non-oriented) cycle. Take any sink vertex $v$ in $Q$. We may assume that the connected components $R_1, \ldots, R_s$ of $Q \backslash \{v\}$ are all of Dynkin type. Suppose that $s > 1$. Since $Q$ has a cycle, there exists $1 \le i \le s$ such that the quiver generated by the vertices in $R_i$ and $v$ has a cycle. Let us choose $R_j$ with $j \ne i$ and $u$ a source vertex in $R_j$. Then $u$ is also a source vertex in $Q$. Let $Q'$ be the quiver obtained by applying a reflection at $u$ to $Q$. Then $u$ becomes a sink vertex in $Q'$. Clearly, the quiver $Q'$ with the sink vertex $u$ satisfy the property of the statement. So assume $s = 1$. Consider now a vertex $u$ in $R_1$ which is either a sink or a source vertex for $R_1$. If $u$ is a source vertex, then consider $Q_u$ the quiver obtained by applying a reflection at $u$ to $Q$. If $u$ is a sink vertex, then let $Q_u$ be the quiver obtained by applying a reflection at $v$ to $Q$. In both cases $u$ is a sink vertex in $Q_u$, so we may assume that $Q_u \backslash\{u\}$ (and hence $Q\backslash\{u\}$) does not contain cycles. Since every cycle in $Q$ passes through $v$, we see that every vertex of degree one in $R_1$ has to be connected to $v$. If $R_1$ is not of type $\mathbb{A}$, then there are exactly three vertices of degree one in $R_1$. So for any such vertex $u$ in $R_1$, we see that $Q\backslash \{u\}$ contains a cycle, a contradiction. Hence, $R_1$ is of type $\mathbb{A}$. If three vertices of $R_1$ are connected to $v$, then again, if $u$ is a vertex of degree one in $R_1$, then $Q\backslash \{u\}$ contains a cycle, a contradiction. This leaves only one possibility: $Q$ is of type $\tilde{\mathbb{A}}$, and this contradicts the fact that $Q$ is of wild type.
\end{proof}

Now, we need to work with the bounded derived category $D(Q) := D^b(\rep(Q))$ of $\rep(Q)$. The reader is referred to \cite{Happel} for more details about the derived category of a finite dimensional $k$-algebra. We denote the suspension functor by $F$. We identify $\rep(Q)$ with the full subcategory of complexes concentrated in degree $0$ in $D(Q)$. If $X \in D(Q)$, then $X$ is a bounded complex
$$0 \to C_u \to C_{u+1} \to \cdots \to C_v \to 0$$ of representations in $\rep(Q)$, where $C_i$ is in degree $i$. We set $d_X = \sum(-1)^id_{C_i} \in \Z^n$. If $X,Y$ are two quasi-isomorphic complexes in $D(Q)$, then clearly, $d_X = d_Y$. Also, if $X$ is a representation, then this newly defined $d_X$ coincides with the dimension vector of $X$. It is well known that every indecomposable object in $D(Q)$ is quasi-isomorphic to an object of the form $F^iX$, where $X \in \rep(Q)$ and $i \in \Z$.

\medskip

An indecomposable object $X$ in $D^b(\rep(Q))$ is \emph{exceptional} if $\Hom_{D(Q)}(X,F^iX)=0$ whenever $i \ne 0$. A sequence $(X_1, X_2, \ldots, X_r)$ of exceptional objects in $D(Q)$ is an \emph{exceptional sequence} if $\Hom_{D(Q)}(X_i, F^sX_j)=0$ whenever $s \in \Z$ and $i < j$. Let $\tau_D$ denote the Auslander-Reiten translate in $D(Q)$. Observe that if $(X_1, X_2, \ldots, X_r)$ is an exceptional sequence in $D(Q)$, then so is $(\tau_D X_1, \tau_D X_2, \ldots, \tau_D X_r)$. If $X$ is an object in $D(Q)$, then $d_{\tau_DX} = Cd_X$, where $C$ is the Coxeter matrix. Observe also that for two objects $X,Y$ in $D(Q)$, we have $\langle d_X, d_Y \rangle = \langle Cd_X, Cd_Y \rangle$.

\medskip

There are two types of connected components of the Auslander-Reiten quiver of $D(Q)$, up to the suspension, for $Q$ connected and of infinite representation type. We have the connecting component of $D(Q)$ which contains the preprojective representations and the inverse of the suspension of the preinjective representations. This component is of shape $\Z Q^{\rm op}$. We also have the regular components of the Auslander-Reiten quiver of $\rep(Q).$

\begin{Lemma} \label{LemmaRankTwo}
Let $Q$ be any connected acyclic quiver of infinite representation type with $|Q_0| \ge 3$. Then there exists an exceptional sequence $(X,Y)$ with $X$ projective and such that $\langle d_Y, d_X \rangle \le -2$. If $Q$ is wild, then we can choose $Y$ to be non-preinjective such that $\langle d_Y, d_X \rangle \le -3$.
\end{Lemma}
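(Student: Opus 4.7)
The plan is to take $X = P_v = S_v$ at a sink $v$ of $Q$, and $Y$ an exceptional representation of $Q$ supported on $Q\setminus\{v\}$. Such a pair is automatically an exceptional sequence: $\Hom(S_v, Y) = Y(v) = 0$ and $\Ext^1(S_v, Y) = 0$ by projectivity of $S_v$. Moreover, since $(d_Y)_v = 0$, the Euler form simplifies to
\[
\langle d_Y, d_X \rangle \;=\; -\sum_{\alpha\in Q_1,\; h(\alpha)=v}(d_Y)_{t(\alpha)},
\]
so the problem will reduce to finding such a pair $(v, Y)$ with the right-hand sum at least $2$ (respectively at least $3$, with $Y$ non-preinjective, for the wild case).

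The key step will be to locate a sink $v$ together with a suitable $Y$. If $Q$ admits a sink $v$ such that some connected component $R$ of $Q\setminus\{v\}$ is of tame or wild representation type, then I would take $Y = \tau_R^{-i}(P_u^R)$ for some $u\in R$ adjacent to $v$ and $i$ large enough. The sequence $(\tau_R^{-i}(P_u^R))_{i\ge 0}$ consists of pairwise distinct exceptional representations of $\rep(R)$ whose dimension vectors grow unboundedly at every vertex of $R$, by Lemma~\ref{LemmaGrowthTau} applied inside $\rep(R)$, so $(d_Y)_u\ge 2$ (resp.\ $\ge 3$) is achievable. When $Q$ is wild, the existence of such a sink is guaranteed by Lemma~\ref{LemmaRemoveVertex}, possibly after a sequence of BGP reflections of $Q$; the resulting exceptional sequence in the reflected quiver will then need to be transferred back to $\rep(Q)$ via APR tilting, together with a mutation step to exhibit the first object as an honest projective of $\rep(Q)$. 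When $Q$ is Euclidean, I would argue by a direct case analysis of the extended Dynkin diagrams: for $\widetilde{\mathbb A}_n$, any sink has two incoming arrows from distinct vertices and the all-ones root of the complementary $A_n$-component gives $\sum = 2$; for $\widetilde{\mathbb D}_n,\widetilde{\mathbb E}_n$, a positive root of a $D$- or $E$-type component of $Q\setminus\{v\}$ with coefficient $\ge 2$ at the vertex adjacent to $v$ does the job (such a root exists since the longest root of $D_n$, $E_6$, $E_7$, $E_8$ has coefficient $2$, $3$, $4$, $6$ at its branch vertex).

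The hard part will be ensuring $Y$ is non-preinjective in $\rep(Q)$ in the wild case. The plan for this is as follows: the normalized dimension vector of $Y = \tau_R^{-i}(P_u^R)$ converges in $\Delta_Q$, as $i\to\infty$, to the special eigenvector $y^-$ of the Coxeter transformation of $R$, which is a ray of $\Delta_Q$ supported only on $R\subsetneq Q_0$, hence \emph{non-sincere} in $Q$; whereas by Lemma~\ref{Lemmay+} the normalized dimension vectors of the preinjective representations of $\rep(Q)$ accumulate only at the strictly positive (sincere) eigenvector $y^+$ of $C_Q$. Since these two limit rays are distinct in $\Delta_Q$, for $i$ sufficiently large $\check d_Y$ lies in a neighborhood of $y^-(R)$ avoiding all but finitely many preinjective dimension vectors; skipping past those finitely many exceptional values then ensures that $Y$ is not preinjective in $\rep(Q)$, completing the proof.
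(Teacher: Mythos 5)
Your overall strategy coincides with the paper's: take $X = P_v = S_v$ at a sink $v$, take $Y$ a far preprojective in a tame or wild component $R$ of $Q\setminus\{v\}$, and invoke Lemma~\ref{LemmaRemoveVertex} to guarantee such a configuration, possibly after reflections. Your non-preinjectivity argument is a genuine alternative and it is correct: the paper instead observes that $M$ admits nonzero morphisms to infinitely many indecomposables of $\rep(R)\subseteq\rep(Q)$ (all its successors in the preprojective component of $\rep(R)$), which a preinjective of $\rep(Q)$ cannot, whereas you argue via Lemma~\ref{Lemmay+} that $\check{d}_{Y_i}\to y^-(R)$, a non-sincere ray, while the preinjective roots of $Q$ have the sincere ray $y^+$ as their unique accumulation point. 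Both work; your version fits the paper's accumulation-point theme.

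The gap is the transfer back from the reflected quiver $Q'$. You write that the sequence ``will then need to be transferred back to $\rep(Q)$ via APR tilting, together with a mutation step to exhibit the first object as an honest projective.'' A mutation of exceptional sequences (the braid action) does not normalize an object to a projective --- it replaces the objects of the pair, and applied here it would destroy the inequality $\langle d_Y,d_X\rangle\le -3$ you built. What is needed is the reduction the paper performs up front: it suffices to produce a pair with $X$ in the connecting component of $D(Q)$ and $Y$ regular or in the connecting component, a condition manifestly invariant under reflection functors; one then applies a suitable power of the autoequivalence $\tau$ to move $X$ to a projective, and this keeps $Y$ a genuine representation precisely because $Y$ is non-preinjective --- which is why non-preinjectivity appears in the statement. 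Without some version of this reduction your transfer does not go through as written, and reflections really are needed (e.g.\ the wild $m$-subspace quiver, $m\ge 5$, with all arrows into the unique sink $v$, has $Q\setminus\{v\}$ discrete). A more minor point: in the Euclidean case, the coefficient of the longest root of $D_n,E_n$ \emph{at the branch vertex} is not the relevant quantity; the vertex $u$ adjacent to $v$ is an arm-endpoint, and you must check the highest root has coefficient $\ge 2$ at $u$, which holds for the right choice of leaf $v$ but needs a slightly different check (the paper sidesteps this by citing \cite[Lemma VII.2.6]{ASS}).
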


\begin{proof} If $Q$ is of Euclidean type, then by using the Auslander-Reiten translate if necessary, it is sufficient to find an exceptional sequence $(X,Y)$ with $X,Y$ preprojective and such that $\langle d_Y, d_X \rangle = -2$. If $Q$ is wild, by using the Auslander-Reiten translate if necessary, it is sufficient to find an exceptional sequence $(X,Y)$ with $X$ preprojective and $Y$ non-preinjective such that $\langle d_Y, d_X \rangle \le -3$.  By working inside $D(Q)$, if $Q$ is of Euclidean type, it is sufficient to find an exceptional sequence $(X,Y)$ in the connecting component of $D(Q)$ such that $\langle d_Y, d_X \rangle = -2$. If $Q$ is wild, it is sufficient to find an exceptional sequence $(X,Y)$ in $D(Q)$ with $X$ in the connecting component and $Y$ in the connecting component or regular. Further, using this observation, the property stated is invariant under a sequence of reflection functors, because a reflection functor preserves the regular representations and the connecting component in the derived categories. Consider first the case when $Q$ is wild. By Lemma \ref{LemmaRemoveVertex}, we may assume that there exists a sink vertex $v$ such that $Q\backslash\{v\}$ has a connected component of tame or wild representation type. Let $R$ be such a connected component, and $M$ a preprojective representation in $\rep(R)$ such that for all vertex $u$ in $R$, $M(u)$ is of dimension at least $3$. Such a $M$ clearly exists by going far enough in the preprojective component of the Auslander-Reiten quiver of $\rep(R)$. Since there are infinitely many indecomposable representations in $\rep(R)$ which are successors of $M$ in the Auslander-Reiten quiver of $\rep(R)$, we see that $M$ cannot be preinjective in $\rep(Q)$. We have an exceptional sequence $(P_v, M)$ with $\langle d_M, d_{P_v} \rangle \le -3$. Finally, assume that $Q$ is Euclidean. Assume first that $Q$ is a tree. Then in this case, there exists a vertex $v$ of degree one with an arrow $u \to v$ or $v \to u$, and an exceptional representation $M$ not supported at $v$ such that $M(u)$ has dimension two; see \cite[Lemma VII 2.6]{ASS}. By using the reflection functor at $v$, we may assume that $v$ is a sink vertex. Then, we get the exceptional sequence $(P_v, M)$. We get $\langle d_M, d_{P_v} \rangle = -{\rm dim}_kM(u) = -2$, which shows the lemma in this case. So we need only to consider the case where $Q$ is of type $\widetilde{\mathbb{A}}_n$. Let $v$ be any sink vertex, and consider $M$ to be the unique, up to isomorphism, indecomposable sincere representation of $Q \backslash\{v\}$. Then $(P_v, M)$ is an exceptional sequence with the desired properties.
\end{proof}

Recall that if $(X,Y)$ is an exceptional sequence in $\rep(Q)$, then the full and thick (that is, additive, abelian and extension-closed) subcategory $\C(X,Y)$ of $\rep(Q)$ generated by $X,Y$ is equivalent to the category of representations of an acyclic quiver $Q(X,Y)$ with two vertices. It is well known that $\langle d_Y, d_X \rangle \le 0$ if and only if $X,Y$ are the non-isomorphic simple objects in $\C(X,Y)$. In this case, we shall also say that $X,Y$ are the \emph{relative-simples} in $\C(X,Y)$. Let $(X,Y)$ be an exceptional sequence in $\rep(Q)$ with $S_1, S_2$ the relative-simples.  We denote by $\ell_{X,Y}$ the line segment in $\Delta(1)$ joining the dimension vectors of $S_1, S_2$. In particular, $\ell_{X,Y}$ contains $\ch{d_X}$ and $\ch{d_Y}$. Observe that $\ell_{X,Y}$ intersects the region $q(z) < 0$ if and only if $Q(X,Y)$ is of wild type. Similarly, $\ell_{X,Y}$ intersects the region $q(z) = 0$ at a single point if and only if $Q(X,Y)$ is the of tame type (hence is the Kronecker quiver). Finally, $\ell_{X,Y}$ does not intersect the region $q(z) = 0$ if and only if $Q(X,Y)$ is either
$$\xymatrix{\bullet & \bullet} \quad \text{or} \quad \xymatrix{\bullet \ar[r] & \bullet}.$$
Observe that when $\C(X,Y)$ is of finite type, then the entire line in $\Delta(1)$ that contains $\ch{d_X},\ch{d_Y}$ does not intersect the quadric. Similarly, if $\C(X,Y)$ is of tame type, then the entire line in $\Delta(1)$ that contains $\ch{d_X},\ch{d_Y}$ intersects the region $q(z) \le 0$ at a single point. These observations indeed follow from the fact that the Tits form of a quiver of finite type (resp. tame type) is positive-definite (resp. positive semi-definite).
In case $\C(X,Y)$ is tame, the line segment $\ell_{X,Y}$ contains a unique isotropic point, denoted $y(\C(X,Y))$, which is rational.  In case $\C(X,Y)$ is of wild type, the line segment $\ell_{X,Y}$ contains exactly two isotropic points $y^-(\C(X,Y)), y^+(\C(X,Y))$ and they are both irrational. We suppose that $y^-(\C(X,Y))$ is the accumulation point of the dimension vectors of the preprojective objects in $\C(X,Y)$ while $y^+(\C(X,Y))$ is the accumulation point of the dimension vectors of the preinjective objects in $\C(X,Y)$. For convenience, in the tame case, we sometimes write $y^-(\C(X,Y)) = y^+(\C(X,Y)) = y(\C(X,Y))$. Finally, given two points $p_1, p_2$ in $\Delta(1)$, we denote by $[p_1, p_2]$ the line segment joining $p_1$ and $p_2$ in $\Delta(1)$.

\begin{Prop} \label{Prop1}Let $Q$ be any connected acyclic quiver of infinite representation type. Then there exists a sequence $((X_i,Y_i))_{i \ge 1}$ of exceptional sequences such that $(y^-(\C(X_i,Y_i)))_{i\ge 1}$ converges to $y^-$ in $\Delta_Q$. The categories $\C(X_i,Y_i)$ are of the same type (that is, tame or wild) as $Q$.
\end{Prop}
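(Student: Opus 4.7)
The plan is to apply the inverse Auslander--Reiten translate iteratively to a well-chosen exceptional pair $(X,Y)$: under $\tau^{-i}$ the dimension vectors of non-preinjective modules migrate towards $y^-$, while the Euler form, and hence the isomorphism type of the rank-two subcategory generated, is preserved. The case $|Q_0|=2$ is trivial (take the constant sequence with $\C(X_i,Y_i)=\rep(Q)$), so I assume $|Q_0|\ge 3$.

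By Lemma \ref{LemmaRankTwo}, there is an exceptional pair $(X,Y)$ with $X=P_v$ projective such that either $\langle d_Y,d_X\rangle=-2$ and $Y$ is preprojective (tame case), or $\langle d_Y,d_X\rangle\le -3$ and $Y$ is non-preinjective (wild case). In both cases $X$ and $Y$ are non-preinjective; moreover, in a connected quiver with at least two vertices no $P_v$ is injective, and the preprojective/regular components of the AR-quiver contain no injective modules. Consequently $\tau^{-i}X$ and $\tau^{-i}Y$ are well-defined non-injective indecomposables of $\rep(Q)$ for every $i\ge 0$. Set $(X_i,Y_i):=(\tau^{-i}X,\tau^{-i}Y)$. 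Passing to $D(Q)$, where $\tau_D$ is an autoequivalence, shows that each $(X_i,Y_i)$ remains an exceptional pair. Since $d_{\tau^{-i}M}=C^{-i}d_M$ and $C$ preserves the Euler form, the pairing $\langle d_{Y_i},d_{X_i}\rangle=\langle d_Y,d_X\rangle$ is constant in $i$, so each $\C(X_i,Y_i)$ has the same rank-two quiver as $\C(X,Y)$: Kronecker in the tame case, and a quiver with at least three arrows in the wild case. As $\langle d_{Y_i},d_{X_i}\rangle<0$, the objects $X_i,Y_i$ are the relative simples of $\C(X_i,Y_i)$, so $\ell_{X_i,Y_i}=[\ch d_{X_i},\ch d_{Y_i}]$ and $y^-(\C(X_i,Y_i))$ lies on this segment.

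By Lemma \ref{LemmaGrowthTau}, both $\ch d_{X_i}\to y^-$ and $\ch d_{Y_i}\to y^-$ in $\Delta(1)$. Therefore the segment $\ell_{X_i,Y_i}$ shrinks to the single point $y^-$, whence $y^-(\C(X_i,Y_i))\to y^-$ in $\Delta_Q$, as required. The main technical point to verify is that the orbits $(\tau^{-i}X)_i$ and $(\tau^{-i}Y)_i$ never leave $\rep(Q)$ (equivalently, never reach an injective); this is precisely why Lemma \ref{LemmaRankTwo} is stated with $X$ projective and $Y$ non-preinjective, so that both orbits stay inside the preprojective or regular part of the Auslander--Reiten quiver.
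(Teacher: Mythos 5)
Your proposal is essentially correct, and in the wild case it coincides almost verbatim with the paper's argument: apply Lemma~\ref{LemmaRankTwo} to get an exceptional pair $(X,Y)$ with $X$ projective and $Y$ non-preinjective, set $(X_i,Y_i)=(\tau^{-i}X,\tau^{-i}Y)$, observe that $\tau^{-1}$ preserves exceptional sequences and the Euler pairing, and invoke Lemma~\ref{LemmaGrowthTau} together with a squeeze argument on the segments $\ell_{X_i,Y_i}$.

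Where you deviate from the paper is the Euclidean case, and here there is a small logical wrinkle. You read off from Lemma~\ref{LemmaRankTwo} that in the tame case $Y$ is preprojective, which you need so that $\tau^{-i}Y$ is always defined and $\ch d_{Y_i}\to y^-$. That fact is established inside the \emph{proof} of Lemma~\ref{LemmaRankTwo}, but it is not part of its \emph{statement}: the statement only promises $\langle d_Y,d_X\rangle\le -2$ in general and non-preinjective $Y$ when $Q$ is wild. So as written your tame case leans on information not contained in the lemma as quoted. The paper sidesteps this entirely: when $Q$ is Euclidean, the unique isotropic ray in $\Delta_Q$ is the null root $\delta=y^-=y^+$, and any rank-two tame subcategory $\C(X,Y)$ must have $y^-(\C(X,Y))=\delta$, so the \emph{constant} sequence $(X_i,Y_i)=(X,Y)$ already does the job. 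This is cleaner, avoids the question of whether $Y$ is preinjective, and you may want to incorporate it. Alternatively, you could strengthen your citation of Lemma~\ref{LemmaRankTwo} by pointing to its proof, which does produce a preprojective $Y$ in the Euclidean case; then your uniform $\tau^{-i}$ treatment of both cases is a valid and slightly more symmetric alternative to the paper's case split. Either way, the heart of the argument — the squeeze of $y^-(\C(X_i,Y_i))$ between $\ch d_{X_i}$ and $\ch d_{Y_i}$, both tending to $y^-$ by Lemma~\ref{LemmaGrowthTau} — is exactly the paper's mechanism.
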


\begin{proof} Clearly, we may assume that $|Q_0| \ge 3$. By Lemma \ref{LemmaRankTwo}, there exists an exceptional sequence $(X,Y)$ with $X$ projective such that $\langle d_Y, d_X \rangle \le -2$. Moreover, if $Q$ is wild, then $Y$ is non-preinjective and $\langle d_Y, d_X \rangle \le -3$. If $Q$ is Euclidean, then $y^-(\C(X,Y)) = y^+(\C(X,Y))$ is the only isotropic point in $\Delta_Q$ and the result follows by taking $X_i = X$ and $Y_i = Y$. Assume that $Q$ is wild. Since $Y$ is not preinjective, consider $X_i = \tau^{-i}X, Y_i = \tau^{-i}Y$, which are all exceptional representations. Therefore, for $i \ge 0$, we have an exceptional sequence $(X_i, Y_i)$ with $\langle d_{Y_i}, d_{X_i} \rangle \le -3$. In particular, $X_i, Y_i$ are the relative-simples in $\C(X_i,Y_i)$. Since both $(d_{X_i})_{i \ge 0}, (d_{Y_i})_{i\ge 0}$ converge to $y^-$, by Lemma \ref{LemmaGrowthTau}, the result follows.
\end{proof}

The above proposition, with its analogue for $y^+$, ensures that the two special eigenvectors $y^-,y^+$, which are in Acc$(Q)$, are obtained as points in the closure $\overline{{\rm Acc}_2(Q)}$ of ${\rm Acc}_2(Q)$. The following result gives a nice property of an accumulation point which is not an eigenvector of the Coxeter transformation.

\begin{Prop} \label{tangent}
Let $Q$ be connected and $v \in \Delta_Q$ not an eigenvector of $C$ such that $q(v)=0$. Then the hyperplane of equation $\langle x, v \rangle = 0$ is not tangent to the quadric $q(x) = 0$.
\end{Prop}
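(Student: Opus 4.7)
The plan is to argue by contradiction: suppose that $H_v := \{x \in \mathbb{R}^n : \langle x, v\rangle = 0\}$ is tangent to the quadric $q = 0$ at $v$, and deduce that $v$ must be an eigenvector of $C$.

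Since $q(x) = \tfrac{1}{2}(x,x)_Q$, a direct computation shows that the differential of $q$ at $v$ is the linear form $h \mapsto (v,h)_Q$, so the tangent hyperplane to $q = 0$ at $v$ is $T_v := \{x \in \mathbb{R}^n : (x,v)_Q = 0\}$. Note that $v \in H_v$ (because $\langle v, v\rangle = q(v) = 0$) and $v \in T_v$ (because $(v,v)_Q = 2q(v) = 0$), so both are hyperplanes through $v$. The tangency hypothesis then amounts to the equality $H_v = T_v$, equivalently to the proportionality of the two linear forms $\langle -, v\rangle$ and $(-, v)_Q = \langle -, v\rangle + \langle v, -\rangle$ on $\mathbb{R}^n$.

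This proportionality forces $\langle v, -\rangle$ itself to be a scalar multiple of $\langle -, v\rangle$; written in terms of $E = E_Q$, this is the matrix identity $E^T v = c\, Ev$ for some $c \in \mathbb{R}$. Since $E$ is invertible (triangular with $1$'s on the diagonal, for a suitable ordering of $Q_0$), the case $c = 0$ would give $E^T v = 0$ and hence $v = 0$, contradicting that $v$ represents a nonzero ray of $\Delta_Q$. So $c \neq 0$.

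Finally, applying $E^{-1}$ on the left to $E^T v = cEv$ yields $E^{-1}E^T v = cv$, and combining with the identity $C = -E^{-1}E^T$ this becomes $Cv = -cv$. Thus $v$ is an eigenvector of $C$, contradicting the hypothesis. The only conceptual step is the identification of tangency of hyperplanes through the origin with proportionality of their defining linear forms; once that is in place, the identity $E^{-1}E^T = -C$ does all the work. I expect no serious obstacle.
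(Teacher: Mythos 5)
Your proof is correct and follows essentially the same route as the paper: both identify tangency of the two hyperplanes through the origin with proportionality of the defining linear forms $\langle -, v\rangle$ and $(x,v)_Q = x^TAv$, extract the matrix identity $E^Tv = cEv$, and conclude $Cv = -cv$ via $C = -E^{-1}E^T$. The only difference is cosmetic: the paper first invokes the implicit function theorem to justify that the tangent hyperplane to the quadric at $v$ exists and equals $\{x : x^TAv = 0\}$, for which it first checks $Av \neq 0$ (which indeed follows from $Cv \neq v$, since $Av = 0$ forces $Cv = v$); you take that identification for granted, which is a harmless shortcut given the hypothesis rules out the degenerate case $Av = 0$ anyway.
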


\begin{proof}
Suppose the contrary.  Write $v = (v_1, \ldots, v_n)$. Since $Cv = -E^{-1}E^Tv \ne v$, we have $Av=(E + E^T)v \ne 0$.  Suppose that the $i$th component of $Av$ is non-zero. Consider the curve $q(x) = 0$. By setting $x = (x_1, x_2, \ldots, x_n)$, we get $\partial q/\partial x_i \ne 0$ at $v$. Hence, by the implicit function theorem, $x_i$ is a function of the other $x_j$, near $v$, through the implicit equation $q(x)=0$. Write $x_i = f(x_1, \ldots, x_{i-1}, x_{i+1}, \ldots, x_n)$, that is defined in a neighborhood of $(v_1, \ldots, v_{i-1}, v_{i+1}, \ldots, v_n)$. The condition also yields that $\partial f / \partial x_j$ exists at the point $v$ for all $j \ne i$. In particular, the tangent hyperplane at $v$ exists. The tangent hyperplane to $q(x) = \langle x, x \rangle = 0$ at $v$ is given by $x^TAv=0$.  By hypothesis, there exists $\lambda \in \mathbb{R}$ such that for all $x \in \mathbb{R}^n$, we have
$$x^TEv = \lambda x^T(E + E^T)v.$$
This gives $(1 - \lambda)Ev = \lambda E^Tv.$
Since $E$ is invertible and $v \ne 0$, $\lambda \ne 0,1$. Since $E$ is invertible and $E^{-1}E^T = -C$, we get $Cv = \frac{\lambda-1}{\lambda}v$, which is impossible.
\end{proof}

Let $X$ be an exceptional representation in $\rep(Q)$. We say that $X$ is of \emph{finite (or tame, or wild)} type if the category $X^\perp$ if of finite (resp. tame, or wild) representation type. The following says that the type coincides with that of $^\perp X$. The proof follows from the Auslander-Reiten formula and is left to the reader.

\begin{Lemma} \label{TypeAR}
Let $X$ be exceptional in $\rep(Q)$. Then $X^\perp$ and $^\perp X$ have the same representation type.
\end{Lemma}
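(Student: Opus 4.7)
The plan is to exhibit an explicit linear isometry between the Tits forms of $X^\perp$ and ${}^\perp X$. Since both are thick subcategories of $\rep(Q)$ equivalent to $\rep(Q')$ and $\rep(Q'')$ for acyclic quivers with $n-1$ vertices, and since the representation type of such a hereditary category is determined by the signature of its Tits form (positive definite giving finite type, positive semi-definite but not definite giving tame type, indefinite giving wild type), an isometry between the two Tits forms will force the representation types to coincide.

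First I would identify the two Grothendieck groups as hyperplanes in $\mathbb{Q}^n$. Since $X^\perp$ is extension-closed in $\rep(Q)$, the dimension-vector map $K_0(X^\perp) \to \mathbb{Z}^n$ respects both the Euler and the symmetrized Euler forms. For $Y \in X^\perp$ the identity $\langle d_X, d_Y \rangle = \hom(X,Y) - \ext(X,Y) = 0$ places the image inside $V_1 := \{v \in \mathbb{Q}^n \mid \langle d_X, v \rangle = 0\}$, and using that the simples of $X^\perp$ together with $X$ form an exceptional sequence of length $n$ (hence $n$ linearly independent dimension vectors) shows the image fills out all of $V_1$ after tensoring with $\mathbb{Q}$. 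A dual argument gives $K_0({}^\perp X) \otimes \mathbb{Q} = V_2 := \{v \mid \langle v, d_X \rangle = 0\}$, and the Tits forms on these are the restrictions of $(-,-)_Q$.

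The heart of the argument is the correction map
$$\phi : V_1 \longrightarrow V_2, \qquad \phi(v) = v - \langle v, d_X \rangle\, d_X.$$
Because $X$ is exceptional, $\langle d_X, d_X \rangle = 1$, and this single identity is what makes the construction work: it gives $\langle \phi(v), d_X \rangle = \langle v, d_X \rangle(1 - \langle d_X, d_X \rangle) = 0$ (so $\phi$ lands in $V_2$), it yields injectivity (a vanishing $\phi(v)$ would force $v$ to be a multiple of $d_X$, inconsistent with $v \in V_1$ unless $v = 0$), and in a short expansion of $(\phi(v), \phi(w))_Q$ it is exactly what causes the cross terms involving $\langle v, d_X \rangle \langle w, d_X \rangle$ to cancel. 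Concretely, for $v, w \in V_1$ one has $(d_X, v)_Q = \langle v, d_X \rangle$, $(d_X, w)_Q = \langle w, d_X \rangle$, and $(d_X, d_X)_Q = 2$, and plugging into the bilinear expansion yields $(\phi(v), \phi(w))_Q = (v, w)_Q$.

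The isometry $\phi$ then forces the restrictions of $(-,-)_Q$ to $V_1$ and to $V_2$ to have identical signatures, and consequently $X^\perp$ and ${}^\perp X$ share the same representation type. The main obstacle is the preliminary identification of the two Grothendieck groups with the hyperplanes $V_1$ and $V_2$, which requires invoking the theory of exceptional sequences to guarantee full rank; the isometry verification itself is elementary, and the exceptionality condition $\langle d_X, d_X \rangle = 1$ is used in an essential way to compensate for the asymmetry of the Euler form.
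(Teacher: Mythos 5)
Your proof is correct, but it takes a genuinely different route from the paper's. The paper explicitly states that the result ``follows from the Auslander--Reiten formula,'' which points to an argument via Serre duality in the derived category: the derived Auslander--Reiten translate $\tau_D$ is an auto-equivalence of $D^b(\rep(Q))$, and the AR formula shows $\tau_D$ carries ${}^\perp X$ onto $X^\perp$, so the two perpendicular categories are derived-equivalent and hence (being hereditary) of the same representation type. You instead give a purely linear-algebraic argument: the explicit correction map $\phi(v) = v - \langle v, d_X\rangle d_X$ is a bijective isometry from $V_1 = \ker\langle d_X, -\rangle$ to $V_2 = \ker\langle -, d_X\rangle$ for the symmetrized form $(-,-)_Q$, with the single fact $\langle d_X, d_X\rangle = 1$ doing all the work (it makes $\phi$ land in $V_2$, makes $\phi$ injective, and cancels the cross terms in the bilinear expansion since $(d_X,d_X)_Q = 2$). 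Combined with the standard identification of $K_0(X^\perp)\otimes\mathbb Q$ and $K_0({}^\perp X)\otimes\mathbb Q$ with $V_1$ and $V_2$ (which needs the linear independence of exceptional sequences), and with the fact that the representation type of a hereditary algebra is read off from the signature of its Tits form, the isometry forces the types to coincide. The trade-off is that your approach avoids derived categories and Serre duality entirely, at the cost of invoking the Tits-form signature criterion and the full-rank property of exceptional sequences; the paper's approach gives the stronger conclusion that $X^\perp$ and ${}^\perp X$ are actually derived-equivalent, of which equality of representation type is a corollary.
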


The following lemma says that in order to study accumulation points of real Schur roots other than the special eigenvectors of $C$, we only need to consider the dimension vectors of the quasi-simple regular representations.

\begin{Lemma} \label{LemmaReg} Let $(X_i)_{i \ge 1}$ be a sequence of pairwise non-isomorphic exceptional representations such that $(d_{X_i})_{i \ge 1}$ converges to a point $p$ with $p \not \in \{y^-,y^+\}$. Then all but finitely many $X_i$ are regular quasi-simple.
\end{Lemma}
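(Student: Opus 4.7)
Since $(d_{X_i})$ converges to $p$, Proposition \ref{HLP} gives $q(p)=0$ and $s(d_{X_i})\to\infty$. I would proceed by ruling out, in turn, three possibilities: that infinitely many $X_i$ are preprojective, preinjective, or regular but not quasi-simple.

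The first two are easy. If infinitely many $X_i$ are preprojective, extracting that subsequence yields a sequence of pairwise distinct preprojective real Schur roots which, by Lemma \ref{Lemmay+}, converges to $y^-$; but the subsequence must also converge to $p$, forcing $p=y^-$, contrary to hypothesis. The preinjective case is symmetric with $y^+$. So after discarding finitely many terms I may assume every $X_i$ is regular.

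Suppose for contradiction that infinitely many $X_i$ have quasi-length $\ell_i\ge 2$. For such an $X_i$, the $\ell_i$ quasi-composition factors are regular quasi-simples along a single ray in the Auslander--Reiten component; they generate a wing equivalent to $\rep(\mathbb{A}_{\ell_i})$ and, suitably ordered, form an exceptional sequence in $\rep(Q)$. Hence $\ell_i\le n$, so after extracting I may assume $\ell_i=\ell$ is constant with $2\le\ell\le n$. Let $R_i$ be the quasi-socle of $X_i$; in the wild regular component $R_i$ is itself exceptional (as $\Ext^1(R_i,R_i)=D\Hom(R_i,\tau R_i)=0$ for distinct quasi-simples in the same $\tau$-orbit). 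Then
\[
d_{X_i}=\sum_{j=0}^{\ell-1}C^{-j}d_{R_i}.
\]
Since the $X_i$ are pairwise distinct and $\ell$ is fixed, so are the $R_i$, whence $s(d_{R_i})\to\infty$; after a further extraction, $\check{d_{R_i}}\to\omega$ with $q(\omega)=0$ by Proposition \ref{HLP}. Continuity of $C^{-j}$ on $\Delta(1)$ then yields the limit identity
\[
p=\frac{1}{S}\sum_{j=0}^{\ell-1}C^{-j}\omega,\qquad S=\sum_{j=0}^{\ell-1}s(C^{-j}\omega),
\]
exhibiting $p$ as a positive convex combination of the points $C^{-j}\omega/s(C^{-j}\omega)$, each of which lies on the quadric (as $C$ preserves the symmetrized form) and is itself an accumulation point of real Schur roots (the limit of the exceptional representations $\tau^{-j}R_i$).

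The hard part is to extract from this convex representation that $p\in\{y^-,y^+\}$. My plan is to expand $0=q(p)$ via the formula above, invoke Lemma \ref{boundary1} to get $(C^{-j}\omega,C^{-k}\omega)_Q\le 0$ for all pairs $j\ne k$, and conclude that every cross term must vanish, giving in particular $(\omega,C^{-j}\omega)_Q=0$ for $1\le j\le\ell-1$. This places each quadric point $C^{-j}\omega$ on the hyperplane $\{x:\langle x,\omega\rangle+\langle\omega,x\rangle=0\}$. By Lemma \ref{tangent}, that hyperplane cannot be tangent to the quadric at $\omega$ unless $\omega$ is an eigenvector of $C$; combined with the presence of the distinct quadric point $C^{-1}\omega$ in the hyperplane, I aim to deduce that $\omega$ is a positive real eigenvector of $C$, so $\omega\in\{y^-,y^+\}$. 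Then $C^{-j}\omega$ is a positive multiple of $\omega$ for every $j$, and the convex-combination formula collapses to $p=\omega\in\{y^-,y^+\}$, contradicting the hypothesis. This eigenvector/tangency step, turning the finitely many orthogonality relations into the conclusion that $\omega$ is an eigenvector of $C$, is the most delicate part of the argument.
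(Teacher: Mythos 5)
Your plan diverges substantially from the paper's proof, and the divergence matters. The paper disposes of the non-quasi-simple case in one stroke by citing \cite[Theorem~2]{HHKU}, which asserts that a wild hereditary algebra has only finitely many $\tau$-orbits of non-quasi-simple regular exceptional representations; once that is in hand, a subsequence lies in a single $\tau$-orbit and Lemma~\ref{LemmaGrowthTau} immediately forces the limit into $\{y^-,y^+\}$. You instead attempt a direct argument via quasi-socles and the $C$-orbit structure. Your first two reductions (ruling out preprojective/preinjective terms, then extracting a constant quasi-length $\ell$) are fine, but the remainder has genuine problems.

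First, your justification that the quasi-socle $R_i$ is exceptional is false as stated. You write ``$\Ext^1(R_i,R_i)=D\Hom(R_i,\tau R_i)=0$ for distinct quasi-simples in the same $\tau$-orbit,'' but for wild quivers $\Hom(R,\tau R)$ is typically nonzero for quasi-simple $R$; this is precisely the wild phenomenon. The conclusion that $R_i$ is exceptional is nonetheless true, but it requires the Strau{\ss} result that the paper invokes later (the quasi-simple composition factors of a non-quasi-simple stone, excluding the quasi-top, are the simples of a thick subcategory of type $\mathbb{A}_{\ell-1}$), not the argument you give.

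Second, the invocation of Lemma~\ref{tangent} is misdirected. The hyperplane you consider, $\{x : (x,\omega)_Q=0\}$, is \emph{always} the tangent hyperplane to the quadric at $\omega$ (provided $A\omega\neq 0$); Lemma~\ref{tangent} concerns the \emph{different} hyperplane $\{x:\langle x,\omega\rangle=0\}$ defined by the unsymmetrized Euler form, and asserts that this one is \emph{not} tangent when $\omega$ is not a $C$-eigenvector. So Lemma~\ref{tangent} does not deliver the step you want.

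Third, and most fundamentally: even granting the orthogonality relations $(\omega,C^{-j}\omega)_Q=0$ with all $C^{-j}\omega$ isotropic, the passage to ``$\omega$ is a $C$-eigenvector'' requires knowing that the quadric $q=0$ contains no $2$-plane. Indeed, if $\omega$ and $C^{-1}\omega$ are not parallel, then $q(\omega)=q(C^{-1}\omega)=0$ and $(\omega,C^{-1}\omega)_Q=0$ together put the entire plane they span inside the quadric. By Proposition~\ref{LemmaPlane}, this is a contradiction exactly when $Q$ is at most weakly hyperbolic, and not otherwise. But Lemma~\ref{LemmaReg} is stated for arbitrary acyclic $Q$ and is applied in Lemma~\ref{Lemma5} to any connected wild $Q$ with at least three vertices, not just weakly hyperbolic ones. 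Your approach, even fully fleshed out, would only prove the weakly hyperbolic case, so it falls short of what is needed. The paper's reliance on the HHKU finiteness theorem is precisely what makes the general statement go through.
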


\begin{proof}
The fact that all but finitely many $X_i$ are regular follows from Lemma \ref{Lemmay+}. By \cite[Theorem 2]{HHKU}, there are finitely many $\tau$-orbits of non-quasi-simple regular exceptional representations. So if the statement of the lemma is not true, it means that there is one such $\tau$-orbit containing infinitely many of the $X_i$. By Lemma \ref{LemmaGrowthTau}, a subsequence of $(d_{X_i})_{i \ge 1}$ converges to a point in $\{y^-,y^+\}$, a contradiction.
\end{proof}

Let us turn our attention to connected wild quivers with three vertices. As we noticed, they are all of weakly hyperbolic type.

\begin{Lemma} \label{MainLemma}
Let $Q$ be a connected wild quiver with three vertices and $X$ be an exceptional wild object in $\rep(Q)$. Then there exist pairwise non-isomorphic exceptional representations $Y_1, Y_2, \ldots$ such that one of the following is satisfied.
\begin{enumerate}[$(1)$]
    \item  We have exceptional sequences $((X,Y_i))_{i \ge 1}$ with $\langle d_{Y_i}, d_X \rangle \le -3$, or
    \item We have exceptional sequences $((Y_i,X))_{i \ge 1}$ with $\langle d_{X}, d_{Y_i} \rangle \le -3$.
\end{enumerate}
\end{Lemma}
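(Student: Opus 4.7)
The plan is to exhibit the sequence $(Y_i)_{i\ge 1}$ inside one of the two rank-two wild perpendicular categories $X^\perp$ and ${}^\perp X$. Since $X$ is exceptional wild and $|Q_0|=3$, Lemma \ref{TypeAR} shows that both subcategories are thick of rank $n-1=2$, hence equivalent to the representations of a generalized Kronecker quiver with $r\ge 3$ arrows. Let $(S_1,S_2)$ be the exceptional sequence of relative-simples of $X^\perp$ and $(T_1,T_2)$ the analogue for ${}^\perp X$. For each $i\ge 0$, $Y_i:=\tau_{X^\perp}^{-i}(S_1)$ is a preprojective exceptional of $X^\perp$, so $(X,Y_i)$ is an exceptional sequence; writing $d_{Y_i}=a_i d_{S_1}+b_i d_{S_2}$ with $s(d_{Y_i})\to\infty$ and $\check d_{Y_i}\to y^-(X^\perp)$, and using that $\langle d_X,d_{S_j}\rangle=0$ gives $\langle d_{S_j},d_X\rangle=(d_X,d_{S_j})_Q$, one obtains
\[
\frac{\langle d_{Y_i},d_X\rangle}{s(d_{Y_i})}\;\longrightarrow\;(d_X,y^-(X^\perp))_Q.
\]
Analogously, the preinjective exceptionals of $X^\perp$ give the limit $(d_X,y^+(X^\perp))_Q$, and dual constructions in ${}^\perp X$ produce exceptional sequences $(Y_i,X)$ with the analogous limits $(d_X,y^\pm({}^\perp X))_Q$ for $\langle d_X,d_{Y_i}\rangle/s(d_{Y_i})$. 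Hence the lemma reduces to showing that at least one of the four quantities $(d_X,y^\pm(X^\perp))_Q$, $(d_X,y^\pm({}^\perp X))_Q$ is strictly negative.

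For this geometric dichotomy, I would examine in $\Delta(1)$ the three affine traces $\ell^+,\ell^-,\ell^A$ of the hyperplanes $H^+=\{\langle d_X,-\rangle=0\}$, $H^-=\{\langle-,d_X\rangle=0\}$, $H_A=\{(d_X,-)_Q=0\}$. Their normals are $E^T d_X$, $E d_X$, and $A d_X=E^T d_X+E d_X$; in particular the third is a strict positive combination of the first two. This forces $\ell^+,\ell^-,\ell^A$ to be concurrent at the trace $\check L$ of $L=H^+\cap H^-\subseteq H_A$, with $\ell^A$ angularly between $\ell^+$ and $\ell^-$ at $\check L$. Moreover, the three lines are pairwise distinct because $d_X$ cannot be a real eigenvector of $C$: the only real eigenvalues of $C$ are the irrationally represented $\lambda_\pm$ and $-1$ (since $\det C=(-1)^n=-1$ for $n=3$), and $Cd_X=-d_X$ is impossible for a rational positive real Schur root on a connected wild quiver with three vertices (it would force $d_{\tau X}=-d_X$ when $X$ is non-projective, or $d_{I_j}=d_{P_j}$ when $X=P_j$).

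Finally, the null arc $\Gamma\subset\Delta(1)$ intersects each of $\ell^+,\ell^-,\ell^A$ transversally in two points, yielding $y^\pm(X^\perp)$, $y^\pm({}^\perp X)$, and $\{z_1,z_2\}$ respectively. Because $\ell^A$ is angularly sandwiched between $\ell^+$ and $\ell^-$ at $\check L$, the classical projective-incidence theorem for three concurrent secants of a conic forces $\{z_1,z_2\}$ to separate the four points $y^\pm(X^\perp)\cup y^\pm({}^\perp X)$ along $\Gamma$, so at least two of these four points lie in the arc where $(d_X,-)_Q<0$, producing the required $Y_i$. The hard part will be carrying out this projective-incidence argument rigorously across degenerate configurations: when $\check L$ falls outside $\Delta(1)$, when $\Gamma$ is a closed curve interior to $\Delta(1)$ rather than an arc meeting the boundary, or when several of the distinguished points coalesce. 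In such cases the argument must be performed in the projective closure of $\Delta(1)$ and combined with Proposition \ref{tangent} to rule out tangencies of $\ell^+$ and $\ell^-$ to $\Gamma$.
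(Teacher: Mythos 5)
Your overall architecture (reduce to showing that one of $(d_X,y^\pm(X^\perp))_Q$, $(d_X,y^\pm({}^\perp X))_Q$ is strictly negative, then harvest $Y_i$ with $s(d_{Y_i})\to\infty$ converging to that direction) is a legitimate approach and quite different from the paper's. The paper does not try to prove a sign dichotomy at the limit directions at all. Instead it proceeds in two stages: (i) if one of the four pairings is negative or a segment $[\check d_X,p_i]$ enters the open quadric, it shows that the values $\langle d_U,d_X\rangle$ over preprojectives $U$ of $X^\perp$ are unbounded below by an almost-split sequence estimate ($\langle d_{U_2},d_X\rangle=(a-1)b<b$ with $a\geq 3$), giving the $Y_i$; (ii) if none of those conditions hold, it derives a representation-theoretic contradiction by showing that every imaginary Schur root would then have to lie in $\C(Y,Z)=X^\perp$, which is impossible. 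Your route, if made rigorous, would replace both stages by one purely quadratic-form argument and would not need the AR-sequence estimate.

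The difficulty is that the geometric step you lean on is not actually justified, and you say so yourself (``The hard part will be carrying out this projective-incidence argument rigorously''). There is no ``classical projective-incidence theorem'' stating that for three concurrent secants of a conic the middle one separates the other four intersection points: when the common point $P=\check L$ lies \emph{outside} the conic the three chords are pairwise non-crossing inside the disc, the six endpoints do \emph{not} interleave, and being ``angularly between'' at $P$ yields no separation on the conic. The genuine content is concealed here: the sign dichotomy you need is governed by the quantity $q(A^{-1}E^Td_X)$ (equivalently, whether the Gram determinant of $q$ on ${\rm span}(d_X,A^{-1}E^Td_X)$ is positive), and it is precisely the constraint $q(d_X)>0$, together with $(d_X,A^{-1}E^Td_X)_Q=\langle d_X,d_X\rangle=1$, that forces the conclusion both when $P$ is inside (where the interleaving picture is correct) and when $P$ is outside (where one must argue instead that $q|_{H^+}$ restricted to the projection of $d_X$ is negative, making $\langle y^+(X^\perp),d_X\rangle$ and $\langle y^-(X^\perp),d_X\rangle$ have opposite signs). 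None of this is in the proposal; nor is the verification that the degenerate case $q(L)=0$ cannot happen (it cannot, because $L$ is a rational line and a rank-two wild subcategory has no rational isotropic direction). So the reduction is fine, but the key geometric claim is asserted rather than proved, and the way you propose to prove it (via a generic incidence theorem plus Proposition \ref{tangent}) would not close the case $\check L$ exterior to $\Gamma$.
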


\begin{proof} Consider an exceptional sequence $(X,Y,Z)$, where we know that $\C(Y,Z)$ is wild.  Assume further that $Y,Z$ are the relative-simples in $\C(Y,Z)$. Then $$\langle d_Z, d_Y \rangle = -a \le -3.$$ Consider the two accumulation points $p_1 = y^-(\C(Y,Z)), p_2 = y^+(\C(Y,Z))$, seen as points in $\Delta(1)$. Assume that the line segment $[p_1, \ch{d_X}]$ intersects the interior of the quadric or $\langle p_1, d_X \rangle < 0$. Since $p_1$ is an accumulation point in $\C(Y,Z)$, we see that in both cases, there are infinitely many preprojective objects $U$ in $\C(Y,Z)$ such that $\langle d_U, d_X \rangle$ is negative. If these numbers are bounded below, let $b < 0$ be the minimal such number. Then there is a an almost split sequence of the form $0 \to U_1 \to E_1 \oplus \cdots \oplus E_a \to U_2 \to 0$ in the preprojective component of $\C(Y,Z)$ such that $\langle d_{U_1}, d_X \rangle = b$. Since both $U_1, E_i$ lie in $X^\perp$, we know that one of $\Hom(U_1, X), \Ext^1(U_1, X)$ is zero and one of $\Hom(E_i, X), \Ext^1(E_i, X)$ is zero. Since $\langle d_{U_1}, d_X \rangle$ is negative, $\Hom(U_1, X)=0$ and $\Ext^1(U_1, X)\ne0$. Since we have a monomorphism $U_1 \to E_i$, we get a surjective map $\Ext^1(E_i, X) \to \Ext^1(U_1, X)$, and hence $\Ext^1(E_i, X) \ne 0$ and $\Hom(E_i, X)=0$. Applying $\Hom(-,X)$ to the monomorphisms $U_1 \to E_i$, we get $\langle d_{E_i}, d_X \rangle \le \langle d_{U_1}, d_X \rangle = b$. By minimality of $b$, $\langle d_{E_i}, d_X \rangle = b$ for all $i$. Since $a > 2$, we get $\langle d_{U_2}, d_X \rangle = ab-b = (a-1)b < b$, a contradiction. Hence these numbers are not bounded and the statement follows in this case. Similarly, the statement holds if the line segment $[p_2, \ch{d_X}]$ intersects the interior of the quadric or $\langle p_2, d_X \rangle < 0$. Therefore, we may assume that none of $[\ch{d_X}, p_1]$, $[\ch{d_X}, p_2]$ intersect the interior of the quadric and $\langle p_i, d_X \rangle \ge 0$ for $i=1,2$. Now, let $(Y', Z', X)$ be an exceptional sequence where we know that $\C(Y', Z')$ is wild by Lemma \ref{TypeAR}. Assume that $Y',Z'$ are the relative-simples in $\C(Y',Z')$. Consider the two accumulations points $p_1' = y^-(\C(Y',Z')), p_2' = y^+(\C(Y',Z'))$, seen as points in $\Delta(1)$. By a similar argument as above, we may restrict to the case where none of $[p_1', \ch{d_X}]$, $[p_2', \ch{d_X}]$ intersect the interior of the quadric and $\langle d_X, p_i' \rangle \ge 0$ for $i=1,2$. Now, observe that if $X$ is not projective, then $Y'=\tau^{-}Y, Z' = \tau^-Z$ and $p_i' = \tau^-(p_i)$. Hence, if $X$ is not projective, we may also restrict to the case where none of $[p_1, \ch{\tau(d_X)}]$, $[p_2, \ch{\tau(d_X)}]$ intersect the interior of the quadric and $\langle \tau(d_X), p_i \rangle \ge 0$ for $i=1,2$.

Consider first the case where $X$ is projective. The conditions given in the first paragraph give that the region $q(x) \le 0$ of $\Delta(1)$ is entirely contained in the triangle of vertices $p_1, p_2, \ch{d_X}$. Hence, if $\alpha$ is any dimension vector lying on or inside the quadric (so $q(\alpha) \le 0$) but is not a dimension vector in $\C(Y,Z)$, then it could be written of the form $\alpha = rd_X + \beta$ where $\beta$ is strictly imaginary in $\C(Y,Z)$ and $r > 0$. In particular, $\beta$ is a Schur root. By the above conditions, we have that $(d_X, \beta)$ is a Schur sequence and $\langle \beta, d_X \rangle \ge 0$. Therefore, the canonical decomposition of $\alpha$ involves $d_X$ and hence, $\alpha$ is not a Schur root. Therefore, the only imaginary Schur roots are in $\C(Y,Z)$. This is impossible. If $X$ is not projective, then the conditions given in the first paragraph give that the region $q(x) \le 0$ of $\Delta(1)$ is entirely contained in the quadrilateral with vertices $p_1, p_2, \ch{d_X}, \ch\tau(d_X)$. Using a similar argument, one can show that a dimension vector lying in the region $q(z) \le 0$ that is not a dimension vector in $\C(Y,Z)$
has either $d_X$ or $d_{\tau(X)}$ in its canonical decomposition. This gives the same contradiction.
\end{proof}

It is well known that a connected wild quiver with at least three vertices has regular exceptional representations. By a result of Strau\ss $\,$  \cite{Strauss}, if $X$ is exceptional regular, then $X^\perp$ is connected if and only if $X$ is quasi-simple. If it is not connected, then $X^\perp$ has two connected components $\C_1,\C_2$, where $\C_1$ is equivalent to $\rep(R_1)$, where the quiver $R_1$ is acyclic connected of wild type and $\C_2$ is equivalent to $\rep(R_2)$, where the quiver $R_2$, which may be empty, is linearly oriented of Dynkin type $\mathbb{A}$. More precisely, $\C_2$ is the thick subcategory of $\rep(Q)$ generated by the quasi-simple composition factors of $X$, except its quasi-top. In particular, the rank of $R_2$ is the quasi-length of $X$ minus one. The following theorem extends the previous lemma. In particular, it says that if $X$ is exceptional regular and $Q$ is wild, then $X$ is a relative-simple of a finitely generated rank-two wild subcategory of $\rep(Q)$.

\begin{Theo} \label{TheoRankTwoSequences}
Let $Q$ be a connected quiver of wild type and $X$ be exceptional and of wild type (for instance, when $X$ is regular).  Then there exist pairwise non-isomorphic exceptional representations $Y_1, Y_2, \ldots$ such that one of the following is satisfied.
\begin{enumerate}[$(1)$]
    \item  We have exceptional sequences $((X,Y_i))_{i \ge 1}$ with $\langle d_{Y_i}, d_X \rangle \le -3$, or
    \item We have exceptional sequences $((Y_i,X))_{i \ge 1}$ with $\langle d_{X}, d_{Y_i} \rangle \le -3$.
\end{enumerate}
\end{Theo}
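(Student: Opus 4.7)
Since $X$ is of wild type, Lemma~\ref{TypeAR} tells us that both $X^\perp$ and $^\perp X$ are of wild representation type; each therefore contains a wild connected thick subcategory equivalent to $\rep(R)$ for some connected acyclic wild quiver $R$. The strategy is to reduce to Lemma~\ref{MainLemma} by producing a rank-two wild subcategory inside $X^\perp$ (for case $(1)$) or inside $^\perp X$ (for case $(2)$), and then running the same almost-split-sequence argument of that lemma.

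In more detail, let $\C_1 \subseteq X^\perp$ be a wild connected component, equivalent to $\rep(R_1)$ for some $R_1$ connected acyclic wild. If $|R_{1,0}| \ge 3$, apply Lemma~\ref{LemmaRankTwo} inside $\C_1$ to obtain an exceptional sequence $(Y,Z)$ with $\langle d_Z, d_Y \rangle \le -3$; if $|R_{1,0}| = 2$, take $Y, Z$ to be the two simple objects of $\C_1$. Because $\rep(Q)$ is hereditary and $\C_1$ is an exact thick subcategory, the Euler form values agree whether computed in $\C_1$ or in $\rep(Q)$. Thus $\C(Y,Z) \subseteq X^\perp$ is a rank-two wild subcategory with $Y, Z$ as its relative-simples, and $(X,Y,Z)$ is an exceptional triple in $\rep(Q)$. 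Put $p_1 = y^-(\C(Y,Z))$ and $p_2 = y^+(\C(Y,Z))$. Since every $v \in K_0(\C(Y,Z)) \subseteq K_0(X^\perp)$ satisfies $\langle d_X, v \rangle = 0$, we have $\langle p_i, d_X \rangle = (p_i, d_X)_Q$; a direct calculation along the line $t \mapsto (1-t)p_i + t\check d_X$, using $q(p_i) = 0$ and $q(\check d_X) > 0$, shows that $[p_i, \check d_X]$ meets the interior of the quadric of $Q$ precisely when this number is negative.

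If $\langle p_1, d_X \rangle < 0$, the almost-split-sequence argument from the proof of Lemma~\ref{MainLemma} carries over verbatim: the preprojective indecomposables $U$ of $\C(Y,Z)$ lie in $X^\perp$, so $(X, U)$ is exceptional; and because the rank-two quiver $Q(Y,Z)$ has at least three arrows, an elementary minimality argument applied to an almost-split sequence in $\C(Y,Z)$ shows that the values $\langle d_U, d_X \rangle$ are unbounded below, producing infinitely many $Y_i = U$ with $\langle d_{Y_i}, d_X \rangle \le -3$, establishing case~$(1)$. The analogous conclusion for $\langle p_2, d_X \rangle < 0$ uses the preinjective indecomposables of $\C(Y,Z)$. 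Starting instead from a wild connected component $\C_1' \subseteq {}^\perp X$ and a corresponding exceptional pair $(Y',Z')$, the dual argument produces case~$(2)$ whenever one of the corresponding numbers $\langle d_X, p_i' \rangle$ is negative.

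The main obstacle is to rule out the residual configuration in which, as the exceptional pairs $(Y,Z)$ in $\C_1$ and $(Y',Z')$ in $\C_1'$ are varied freely, all four quantities $\langle p_1, d_X \rangle$, $\langle p_2, d_X \rangle$, $\langle d_X, p_1' \rangle$, $\langle d_X, p_2' \rangle$ remain non-negative. The plan here is that, by Proposition~\ref{Prop1} applied inside $\C_1$ (respectively $\C_1'$), the accumulation points so produced are dense in the positive isotropic cone of $\C_1$ (respectively $\C_1'$); the residual hypothesis therefore forces $(v, d_X)_Q \ge 0$ for every $v$ in both cones. Combined with $\langle d_X, - \rangle \equiv 0$ on $K_0(\C_1)$ and $\langle -, d_X \rangle \equiv 0$ on $K_0(\C_1')$, a linear-algebraic argument extending the three-vertex analysis of Lemma~\ref{MainLemma} then pushes $d_X$ into the radical of the symmetric form $(-, -)_Q$, contradicting $(d_X, d_X)_Q = 2$. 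This final reduction, which replaces the three-vertex triangle geometry by a dimension-counting argument in the Grothendieck group using both perpendicular categories simultaneously, is the delicate core of the argument.
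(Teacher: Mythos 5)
Your proposal diverges substantially from the paper's proof and has a genuine gap at its core. The paper proceeds by a clean induction on $|Q_0|$: it first replaces $\rep(Q)$ by ${}^\perp\C_2$ (where $\C_2$ is the finite-type component of $X^\perp$) so that $X^\perp$ is connected; then, if $Q$ still has more than three vertices, it picks an exceptional regular quasi-simple $Y$ in $X^\perp$, not lying in a wing containing $X$, and observes that $X$ is then a wild exceptional object in the wild component $\C_3$ of ${}^\perp Y$ --- a category of strictly smaller rank. Induction then reduces everything to the three-vertex base case, which is Lemma~\ref{MainLemma}. Your proof instead tries to run the Lemma~\ref{MainLemma} argument directly inside a rank-two subcategory of $X^\perp$ (or ${}^\perp X$) and to handle the ``residual configuration'' by a separate linear-algebraic contradiction. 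That residual configuration is exactly the hard part, and it is where the paper's triangle/quadrilateral geometry (which crucially lives in a $2$-dimensional $\Delta(1)$ when $n=3$) has no straightforward analogue.

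Concretely, two steps in your reduction of the residual case are unsupported. First, you invoke Proposition~\ref{Prop1} to claim that the accumulation points $y^\pm(\C(Y,Z))$ produced by varying the rank-two pair $(Y,Z)$ inside $\C_1$ are \emph{dense} in the positive isotropic cone of $\C_1$; Proposition~\ref{Prop1} only asserts convergence of one such sequence to $y^-$ (and dually to $y^+$), not density in the entire isotropic boundary, and such density is not established anywhere in the paper. Second, the passage from ``$(v,d_X)_Q \ge 0$ on both isotropic cones together with $\langle d_X, - \rangle \equiv 0$ on $K_0(\C_1)$ and $\langle - , d_X \rangle \equiv 0$ on $K_0(\C_1')$'' to ``$d_X$ lies in the radical of $(-,-)_Q$'' is stated but not argued; note that $K_0(\C_1)$ and $K_0(\C_1')$ are corank-one subgroups that need not together span $\Z^n$, and the sign conditions on the cones do not obviously upgrade to vanishing of $(d_X,-)_Q$ on a spanning set. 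You yourself flag this step as ``the delicate core,'' but as written it is a placeholder, not a proof. The paper sidesteps all of this by the perpendicular-category induction, which is the idea your proposal is missing.

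Two smaller points: in the case $|R_{1,0}|=2$ you cannot simply take the two simples of $\C_1$, since $\C_1 \subseteq X^\perp$ need not itself be wild of rank two when $X^\perp$ has rank two (then $X^\perp$ is the whole perpendicular category and you are already in the three-vertex situation of Lemma~\ref{MainLemma}); and the almost-split argument you import ``verbatim'' relies on the ambient $\Delta(1)$ picture of Lemma~\ref{MainLemma}, which was formulated for $n=3$ and would need to be re-justified in $X^\perp$ rather than in $\rep(Q)$.
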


\begin{proof}
We proceed by induction on the rank of $\rep(Q)$. Assume $|Q_0| \ge 3$. We know that $X^\perp$ is wild. By the above observation, the category $X^\perp$ has at most two connected components $\C_1,\C_2$, where $\C_1$ is equivalent to $\rep(R_1)$, where the quiver $R_1$ is acyclic connected of wild type and $\C_2$ is equivalent to $\rep(R_2)$, where the quiver $R_2$, which may be empty, is of Dynkin type. As observed above, an indecomposable object $Z$ in $\C_2$ lies in the same Auslander-Reiten component as $X$. We will instead work in the category $\C: = ^\perp \C_2$, which is wild and connected by \cite{Strauss} and in which $X$ is a an exceptional wild object, since $X^\perp=\C_1$ in $\C$. Notice that $\C$ is the category of representations of a finite acyclic connected quiver $Q'$. So we may assume that $Q = Q'$ and $\C = \rep(Q)$. If $Q=Q'$ has three vertices, then we infer Lemma \ref{MainLemma}. Assume otherwise. Then $\C_1$ is connected of rank at least three and hence there exists an exceptional regular quasi-simple object $Y$ in $\C_1$. Then $Y$ is also regular in $\rep(Q)$. We may assume that $X$ do not lie in the wing generated by $Y$ in $\rep(Q)$, because there are infinitely many exceptional regular quasi-simple objects in $\C_1$. But then $^\perp Y$ has at most two connected components $\C_3,\C_4$, where $\C_3$ is equivalent to $\rep(R_3)$, where the quiver $R_3$ is connected of wild type and $\C_4$ is equivalent to $\rep(R_4)$, where the quiver $R_4$, which may be empty, is of Dynkin type.
By the above observation, since $X$ do not lie in the wing generated by $Y$, we see that $X \in \C_3$. Now, $\C_3$ is equivalent to $\rep(R_3)$ with $R_3$ connected wild and $X$ is an exceptional object in $\C_3$. Now, since $Y$ is regular in $\C_1$, $X^\perp \cap ^\perp Y$ is wild.  Since $^\perp Y$ is the additive hull of $\C_3$ and $\C_4$ with $\C_4$ of finite type, we see that $X^\perp \cap \C_3$ is also wild. Hence, $X$ is a wild exceptional object in $\C_3$ and we may proceed by induction.
\end{proof}

Recall that for a dimension vector $d$, we denote by $s(d)$ the sum of its entries.

\begin{Lemma} \label{Lemma5}Let $Q$ be connected wild with at least $3$ vertices. Let $(X_i)_{i \ge 1}$ be a sequence of exceptional representations such that $(d_{X_i})_{i \ge 1}$ accumulates in $\Delta_Q$ to a point different from $y^-, y^+$. Then there exists a subsequence $(X_{j})_{j \in J}$ of $(X_i)_{i \ge 1}$ that satisfies one of following.
\begin{enumerate}[$(1)$]
    \item For each $j \in J$, there exists $Y_j$ such that we have an exceptional sequence $(X_j,Y_j)$ with $\langle d_{Y_j}, d_{X_j} \rangle \le -3$.
\item For each $j \in J$, there exists $Y_j$ such that we have an exceptional sequence $(Y_j,X_j)$ with $\langle d_{X_j}, d_{Y_j} \rangle \le -3$.\end{enumerate}
\end{Lemma}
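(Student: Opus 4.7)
The approach is a direct combination of Lemma \ref{LemmaReg}, the result of Strau\ss{} recalled just before Theorem \ref{TheoRankTwoSequences}, and Theorem \ref{TheoRankTwoSequences} itself, finished off by a pigeonhole argument. The plan is to pass to a subsequence where each $X_i$ is regular quasi-simple (so in particular of wild type), apply Theorem \ref{TheoRankTwoSequences} individually to each such $X_i$ to obtain an exceptional companion $Y_i^{(k)}$ on one of the two sides, and then extract an infinite subset of indices on which the chosen side is the same.

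First, since $(d_{X_i})_{i \ge 1}$ accumulates to a point in $\Delta_Q$ different from $y^-$ and $y^+$, Lemma \ref{LemmaReg} guarantees that all but finitely many $X_i$ are regular quasi-simple exceptional representations. Discarding the finitely many exceptions and relabeling, I may assume that every $X_i$ is regular and quasi-simple. By Strau\ss's theorem, the perpendicular category $X_i^\perp$ is then connected, and because $Q$ is wild with at least three vertices $X_i^\perp$ is equivalent to $\rep(R_i)$ for a connected acyclic quiver $R_i$ of wild type (the Dynkin-$\mathbb{A}$ component from Strau\ss's description is absent in the quasi-simple case). Hence each $X_i$ is of wild type in the sense of the definition preceding Lemma \ref{TypeAR}, which is exactly the hypothesis required to invoke Theorem \ref{TheoRankTwoSequences} on $X_i$.

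Applying Theorem \ref{TheoRankTwoSequences} to each $X_i$ produces infinitely many pairwise non-isomorphic exceptional representations $Y_i^{(1)}, Y_i^{(2)}, \ldots$ satisfying either the first alternative (exceptional sequences $(X_i, Y_i^{(k)})$ with $\langle d_{Y_i^{(k)}}, d_{X_i}\rangle \le -3$) or the second alternative (exceptional sequences $(Y_i^{(k)}, X_i)$ with $\langle d_{X_i}, d_{Y_i^{(k)}}\rangle \le -3$). Let $J_1$, respectively $J_2$, be the set of indices $i$ for which the first, respectively the second, alternative is realised; since $J_1 \cup J_2 = \mathbb{N}$, at least one of them is infinite. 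Take $J$ to be such an infinite set and, for each $j \in J$, set $Y_j := Y_j^{(1)}$. Then $(X_j)_{j \in J}$, together with the chosen $Y_j$, satisfies the corresponding alternative $(1)$ or $(2)$ of the statement. The only genuine subtlety is the verification that a regular quasi-simple exceptional $X_i$ is of wild type (so that Theorem \ref{TheoRankTwoSequences} applies), and this is immediate from Strau\ss's description of $X_i^\perp$; the remainder is a routine pigeonhole argument.
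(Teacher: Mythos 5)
Your proof is correct and takes essentially the same route as the paper's: the paper's own proof is the one-liner ``By Lemma~\ref{LemmaReg}, all but finitely many $X_i$ are regular. Combining this with Theorem~\ref{TheoRankTwoSequences}, we get the wanted result,'' and your version simply spells out the two implicit steps --- verifying that a regular exceptional $X_i$ is of wild type (so Theorem~\ref{TheoRankTwoSequences} applies) and the pigeonhole extraction of an infinite $J$ on which the same alternative is realised. One small remark: the claim that the connected perpendicular category $X_i^\perp$ of a regular quasi-simple exceptional is \emph{wild} (not merely connected) is not literally contained in the version of Strau\ss's theorem the paper quotes; both you and the paper rely on the parenthetical assertion in Theorem~\ref{TheoRankTwoSequences} that regular exceptional objects are of wild type, so this is not a new gap, but it is worth being aware that the justification there is by reference rather than by Strau\ss's connectedness dichotomy alone.
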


\begin{proof}
By assumption, the sequence $(d_{X_i})_{i \ge 1}$ accumulates in $\Delta_Q$ to a point $p$ which is not a special eigenvector of $C$ or of $C^{-1}$. By Lemma \ref{LemmaReg}, all but finitely many $X_i$ are regular. Combining this with Theorem \ref{TheoRankTwoSequences}, we get the wanted result.
\end{proof}

We are now ready to prove the main result of this section.

\begin{Theo} Let $Q$ be connected weakly hyperbolic. Then ${\rm Acc}(Q) = \overline{{\rm Acc}_2(Q)}$.
\end{Theo}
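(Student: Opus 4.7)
The plan is to prove both inclusions of ${\rm Acc}(Q)=\overline{{\rm Acc}_2(Q)}$. The direction $\overline{{\rm Acc}_2(Q)}\subseteq {\rm Acc}(Q)$ is immediate: real Schur roots of a rank-two wild subcategory of $\rep(Q)$ are real Schur roots of $Q$, so ${\rm Acc}_2(Q)\subseteq {\rm Acc}(Q)$, and a standard diagonal argument (given a convergent sequence $(p_n)$ in ${\rm Acc}(Q)$, pick a pairwise distinct real Schur root $\alpha_n$ with $\ch\alpha_n$ within $1/n$ of $p_n$) shows ${\rm Acc}(Q)$ is closed.

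For the reverse inclusion, let $p\in {\rm Acc}(Q)$. If $p\in\{y^-,y^+\}$, Proposition \ref{Prop1} and its dual directly produce sequences of wild rank-two subcategories $\C(X_i,Y_i)$ whose accumulation points $y^\pm(\C(X_i,Y_i))$ converge to $y^\pm$, so $y^\pm\in\overline{{\rm Acc}_2(Q)}$. Otherwise, Lemma \ref{Lemma5} yields, after a subsequence, pairwise non-isomorphic exceptional representations $X_j$ with $\ch d_{X_j}\to p$ together with partners $Y_j$ such that (without loss of generality) $(X_j,Y_j)$ is exceptional with $a_j := -\langle d_{Y_j},d_{X_j}\rangle\ge 3$. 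Each $\C(X_j,Y_j)$ is then an $a_j$-generalized Kronecker subcategory of wild type, with its two isotropic points $y^\pm(\C(X_j,Y_j))\in {\rm Acc}_2(Q)$ lying on the segment $\ell_{X_j,Y_j}$.

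The crux is to show that, possibly after replacing the $Y_j$, one of these isotropic points converges to $p$. A direct computation inside $\C(X_j,Y_j)$ places the isotropic point closer to $\ch d_{X_j}$ on $\ell_{X_j,Y_j}$ at parameter
\[
\lambda^{(j)}=\frac{s(d_{Y_j})}{\rho_+^{(j)}\,s(d_{X_j})+s(d_{Y_j})},\qquad \rho_+^{(j)}=\frac{a_j+\sqrt{a_j^2-4}}{2}\ge\tfrac{3}{2}.
\]
Since $s(d_{X_j})\to\infty$, we obtain $\lambda^{(j)}\to 0$ whenever $\rho_+^{(j)}s(d_{X_j})/s(d_{Y_j})\to\infty$ (for instance whenever $s(d_{Y_j})$ stays bounded); the corresponding isotropic point $(1-\lambda^{(j)})\ch d_{X_j}+\lambda^{(j)}\ch d_{Y_j}$ then converges to $p$, proving $p\in\overline{{\rm Acc}_2(Q)}$.

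The main obstacle is securing this control on the $Y_j$. Here the flexibility from Theorem \ref{TheoRankTwoSequences} is essential: it attaches infinitely many exceptional partners to each $X_j$, from which we may choose. Passing to a subsequence we may assume $\ch d_{Y_j}\to q$ for some $q\in\Delta(1)$; if $q=p$ the segment $\ell_{X_j,Y_j}$ collapses onto $p$ and both isotropic points automatically tend to $p$. If $q\ne p$, the exceptionality $\langle d_{X_j},d_{Y_j}\rangle=0$ forces $\langle p,q\rangle=0$, and Proposition \ref{tangent} (applicable because $p$ is not an eigenvector of $C$) shows that the hyperplane $\langle x,p\rangle=0$ is distinct from the tangent hyperplane $(x,p)_Q=0$ to the quadric at $p$. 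Exploiting this together with the weakly hyperbolic geometry of the quadric, we select partners $Y_j$ whose limit direction is non-tangent to the quadric at $p$, for which the required control is achieved, completing the proof.
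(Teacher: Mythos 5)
Your overall structure matches the paper's (reduce to $p\notin\{y^-,y^+\}$, get regular $X_j$ with wild rank-two partners $Y_j$, track the isotropic points of $\C(X_j,Y_j)$), but your mechanism for the final convergence step is genuinely different: you compute the convex parameter $\lambda^{(j)}$ of the isotropic point explicitly in terms of $\rho_+^{(j)}$ and $s(d_{X_j}), s(d_{Y_j})$, whereas the paper argues geometrically that the quadric intersections of $\ell_{X_j,Y_j}$ must converge into $[p,p']\cap\{q=0\}=\{p,p'\}$, which by the weakly hyperbolic hypothesis is a two-point set, and then uses the ordering on the segment. The quantitative route is a nice alternative and could be made to work, but as written it has a real gap in the case $q\neq p$, $q$ on the quadric (i.e.\ infinitely many distinct $Y_j$, so $s(d_{Y_j})\to\infty$). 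You assert that "we select partners $Y_j$ whose limit direction is non-tangent to the quadric at $p$." However, the $Y_j$ produced by Lemma \ref{Lemma5} are not freely selectable, and even if they were, you have not explained how non-tangency gives $\lambda^{(j)}\to 0$.

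In fact no selection is needed and no tangency argument is the right tool here: what you should prove is that $\langle q,p\rangle<0$ strictly. Since $\langle p,q\rangle=0$ and $q(p)=q(q)=0$, if $\langle q,p\rangle=0$ as well then $(p,q)_Q=0$, and Lemma \ref{TrivialLemma} (which is where weak hyperbolicity enters) forces $q$ to be a multiple of $p$, contradicting $q\ne p$ in $\Delta(1)$. Hence $\langle q,p\rangle<0$, so $a_j=-\langle d_{Y_j},d_{X_j}\rangle= -s(d_{X_j})s(d_{Y_j})\langle \ch d_{Y_j},\ch d_{X_j}\rangle$ grows like $s(d_{X_j})s(d_{Y_j})$, whence $\rho_+^{(j)}\sim a_j$ and $\rho_+^{(j)}s(d_{X_j})/s(d_{Y_j})\to\infty$, giving $\lambda^{(j)}\to 0$ as wanted. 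The appeal to Proposition \ref{tangent} is both a detour (its hypothesis that $p$ is not an eigenvector of $C$ is not established for your $p$, and it concerns $\langle x,p\rangle=0$ rather than $\langle p,x\rangle=0$) and unnecessary given Lemma \ref{TrivialLemma}. Replacing your last paragraph with the estimate above would close the gap and yield a complete proof, somewhat more explicit than the paper's segment-ordering argument.
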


\begin{proof}
Let $p$ be an accumulation point of real Schur roots in $\Delta_Q$. Hence, there exists a sequence $(X_i)_{i \ge 1}$ of pairwise non-isomorphic exceptional representations such that $(d_{X_i})_{i \ge 1}$ converges to $p$ in $\Delta_Q$. Of course, we may assume that $Q$ has at least three vertices. By Proposition \ref{Prop1} and Lemma \ref{LemmaReg}, we may assume that all the $X_i$ are regular and $p \not \in \{y^-, y^+\}$. By Lemma \ref{Lemma5}, we may further assume that for $i \ge 1$, there exists an exceptional representation $Y_i$ such that $(X_i,Y_i)$ (or $(Y_i,X_i)$) is an exceptional sequence, $\C(X_i,Y_i)$ (resp. $\C(Y_i,X_i)$) is wild and $X_i,Y_i$ are the relative simples in $\C(X_i,Y_i)$ (resp. $\C(Y_i,X_i)$). Assume that we have exceptional sequences $(X_i,Y_i)$ for all $i \ge 1$. The other case where we have exceptional sequences $(Y_i,X_i)$ for all $i$ is treated in a similar way. By compactness of $\Delta_Q$, we may assume that the sequence $(d_{Y_i})_{i \ge 1}$ converges to a point $p'$ in $\Delta_Q$. So either the set $\{Y_i \mid i \ge 1\}$ is finite, up to isomorphism, or $p'$ lies on the quadric. Suppose the first case occurs. Since the sequence $(d_{X_i+Y_i})_{i \ge 1}$ lies inside the quadric and converges to $p$, this shows that $y^-(\C(X_i,Y_i))$ converges to $p$. So assume $p'$ lies on the quadric. If $p=p'$, then both sequences $(y^-(\C(X_i,Y_i)))_{i \ge 1}$, $y^+(\C(X_i,Y_i))_{i \ge 1}$ converge to $p$. So assume $p \ne p'$. Since $Q$ is weakly hyperbolic, the line segment $[p,p']$ lies in the region $q(x) \le 0$ and only $p,p'$ lie on $q(x)=0$. Since the limit of the sequences $(y^-(\C(X_i,Y_i)))_{i \ge 1}$, $y^+(\C(X_i,Y_i))_{i \ge 1}$ also lie on $[p,p']$, we see that $(y^-(\C(X_i,Y_i)))_{i \ge 1}$ converges to $p$.
\end{proof}

Observe that only the last step of the above proof uses the fact that $Q$ is weakly hyperbolic. They key fact is that for a quiver of weakly hyperbolic type, the region $q(x) = 0$ does not contain a plane. Also, note that in case $Q$ is of weakly hyperbolic type, the rational accumulation points of real Schur roots coincide with the isotropic Schur roots, and these coincide with the rational accumulation points of the subcategories of the form $\C(X,Y)$ where $(X,Y)$ is an exceptional sequence with $\langle d_Y, d_X \rangle = -2$. However, the description of accumulation points of real Schur roots in the above theorem does not use these isotropic Schur roots. Therefore, an isotropic Schur root is an accumulation point of irrational accumulation points of real Schur roots.

\medskip

Observe also that taking the closure of ${\rm Acc}_2(Q)$ in the previous theorem is essential. Indeed, none of the two special eigenvectors $y^-, y^+$ can be obtained as an accumulation point of a subcategory of the form $\C(X,Y)$ where $(X,Y)$ is an exceptional sequence. The next proposition is about this fact.

\begin{Prop}
Let $\C= \C(X,Y)$ be a rank-two finitely generated thick subcategory of $\rep(Q)$ where $Q$ is connected with $|Q_0| \ge 3$. Then $y^-, y^+$ do not belong to $\ell_{X,Y}$.
\end{Prop}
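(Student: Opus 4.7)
The argument splits into cases according to the representation type of $\C = \C(X,Y)$. Throughout, $Q$ is of wild type (the standing setting of this section), so both $y^-, y^+$ are irrational in $\Delta_Q$ and lie on the quadric $q(x)=0$: indeed $C \in \mathcal{W}_Q$ preserves the symmetric form $(-,-)$, and $Cy^- = \lambda_- y^-$ with $\lambda_- \ne 1$ forces $q(y^-) = \lambda_-^2\, q(y^-) = q(y^-)$ to vanish; the analogous argument handles $y^+$, while irrationality was recalled just before the statement.

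If $\C$ has finite representation type then the entire line in $\Delta(1)$ through $\ch d_{S_1}$ and $\ch d_{S_2}$ avoids the quadric, so $y^\pm \notin \ell_{X,Y}$. If $\C$ is tame (Kronecker) then $\ell_{X,Y}$ meets the quadric at a unique rational point, distinct from the irrational $y^\pm$, and every other point of $\ell_{X,Y}$ is off the quadric, so again $y^\pm \notin \ell_{X,Y}$. These two cases dispense with the non-wild possibilities quickly.

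The substantive case is when $\C$ is of wild type, where $\ell_{X,Y}$ meets the quadric precisely at the two irrational points $y^-(\C), y^+(\C)$. Assume, for contradiction, that $y^- \in \ell_{X,Y}$ (the argument for $y^+$ is symmetric); then $y^-$ coincides with one of $y^\pm(\C)$ and so belongs to the two-dimensional subspace $V := \mathbb{R}\text{-span}(d_{S_1}, d_{S_2})$, where $S_1, S_2$ are the relative simples of $\C$. A short computation using $Cy^- = \lambda_- y^-$, the invariance of $(-,-)$ under $C$, and $\lambda_- \ne 1$ yields $(x, y^-) = (1-\lambda_-)\langle x, y^-\rangle$ for every $x$, so the tangent hyperplane to $q=0$ at $y^-$ is $H := \{x : \langle x, y^-\rangle = 0\}$. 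Either $V \subseteq H$ or $V \cap H = \mathbb{R} y^-$.

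In the first subcase $y^-$ lies in the radical of the restriction of $(-,-)$ to $V$; however, that restriction has matrix $\left(\begin{smallmatrix} 2 & -p \\ -p & 2 \end{smallmatrix}\right)$ in the basis $(d_{S_1}, d_{S_2})$, with $p \ge 3$ the number of arrows of the Kronecker-type quiver $Q'$ satisfying $\rep(Q') \simeq \C$; its determinant $4-p^2$ is nonzero, so the form is nondegenerate---a contradiction. In the second subcase $\langle y^+(\C), y^-\rangle \ne 0$, and I would finish by exploiting the fact that the other isotropic point $y^+(\C) \in V$ lies on the quadric but is \emph{not} an eigenvector of $C$, so Proposition~\ref{tangent} applies at $y^+(\C)$; combining this failure of tangency with the fact that the quadric inside $V$ consists of only the two lines $\mathbb{R} y^-$ and $\mathbb{R} y^+(\C)$ (and, in the weakly hyperbolic setting of the surrounding section, with Lemma~\ref{TrivialLemma}) should force $y^+(\C)$ to be collinear with $y^-$, contradicting their distinctness in $\Delta_Q$. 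Making this final geometric reconciliation precise is the principal technical obstacle of the proof.
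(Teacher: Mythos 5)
Your approach is genuinely different from the paper's, and it does not close. The paper first shows that $\C$ contains only finitely many preprojective and finitely many preinjective objects of $\rep(Q)$ (a counting argument about $\tau$-orbits and the fact that $^\perp Z = (\tau^{-}Z)^\perp$), so that one can pick a preprojective $X'$ and a preinjective $Y'$ relative to $\C$ which are both \emph{regular or preinjective} in $\rep(Q)$; since $y^- = y^{\pm}(\C)$ is a positive combination of $d_{X'}, d_{Y'}$, the Pena inequality $\langle y^-, d_Z\rangle > 0$ for $Z$ regular or preinjective forces $\langle y^-, y^-\rangle > 0$, contradicting $q(y^-)=0$. This is entirely representation-theoretic and uses nothing about the geometry of the quadric beyond isotropy of $y^-$.

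Your argument is linear-algebraic. The reductions for finite and tame $\C$ are fine, the identity $(x,y^-) = (1-\lambda_-)\langle x, y^-\rangle$ is correct, and the first subcase ($V \subseteq H$, i.e.\ $y^-$ in the radical of $(-,-)\vert_V$) genuinely contradicts nondegeneracy of the restricted form ($\det = 4-p^2 \ne 0$). The gap is exactly where you flag it: in the second subcase the proposed combination of Proposition~\ref{tangent} and Lemma~\ref{TrivialLemma} does not produce a contradiction. Lemma~\ref{TrivialLemma} would only force $y^+(\C)$ collinear with $y^-$ if one knew $(y^-, y^+(\C))_Q = 0$; but in the second subcase you have precisely derived $\langle y^+(\C), y^-\rangle \ne 0$, and since $(y^+(\C),y^-) = (1-\lambda_-)\langle y^+(\C), y^-\rangle$ with $\lambda_- \ne 1$, this gives $(y^-, y^+(\C))_Q \ne 0$ --- so the hypothesis of the lemma is not met and nothing follows. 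Likewise Proposition~\ref{tangent} only tells you that the hyperplane $\langle x, y^+(\C)\rangle = 0$ differs from the tangent hyperplane at $y^+(\C)$; it gives no information about the position of $y^-$. A further difficulty is that applying Proposition~\ref{tangent} at $y^+(\C)$ requires knowing $y^+(\C)$ is not an eigenvector of $C$, which you would have to establish separately and which is essentially the same kind of statement you are trying to prove. Some extra input (about how $C$ acts on $V$, or a representation-theoretic argument like the paper's) is needed to finish.
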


\begin{proof} We claim that $\C$ contains finitely many preprojective objects of $\rep(Q)$ and finitely many preinjective objects of $\rep(Q)$. We only prove the fist part, since the proof of the other part is similar. Assume to the contrary that $\C$ contains infinitely many preprojective objects of $\rep(Q)$. Let $X_1, X_2, \ldots$ be an infinite sequence of preprojective objects of $\rep(Q)$ such that for all $i \ge 1$, $X_i \in \C$. Then for $i \ge 1$, we have that $\tau^{j_i}X_i$ is projective for some $j_i \ge 0$. We may assume that $j_{i_2} > j_{i_1}$ if $i_2 > i_1$. Let $Z$ be such that $(X,Y,Z)$ is an exceptional sequence. If $Z$ is regular or preinjective, then for $i \ge 1$, $\tau^{j_i}Z$ is not sincere. On the other hand, as $i$ goes to infinity, $\tau^iZ$ converges to $y^-$ which is a strictly positive vector. This is a contradiction. Hence, $Z$ has to be preprojective. Now, $^\perp Z$ is equal to $(\tau^-Z)^\perp$. Let $r$ be a positive integer with $\tau^{-1+r}Z$ projective. Since all but finitely many preprojective objects in $\rep(Q)$ are sincere, we see that $(\tau^{-1+r}Z)^\perp$ contains finitely many preprojective objects. Hence, the same holds for $^\perp Z = (\tau^-Z)^\perp = (\tau^{-1+r -r}Z)^\perp$. This proves the claim.

Suppose to the contrary that $y^- \in \ell_{X,Y}$. In particular, $\C(X,Y)$ is of wild type. Now, $y^-$ is one of $y^-(\C(X,Y))$, $y^+(\C(X,Y))$. Hence, for every preprojective object $X'$ of $\C(X,Y)$ and every preinjective object $Y'$ of $\C(X,Y)$, we have that $y^-$ is a positive linear combination of $d_{X'}$ and $d_{Y'}$. By the above claim we can pick such $X',Y'$ that are regular in $\rep(Q)$. Now, if $Z$ is any indecomposable representation of $Q$ which is regular or preinjective, then from \cite[Theorem, page 240]{Pena}, $\langle y^-, d_X \rangle > 0$. This gives $\langle y^-, y^- \rangle > 0$, a contradiction. Therefore, $y^-$ does not lie in $\ell_{X,Y}$ and similarly, $y^+$ does not lie in $\ell_{X,Y}$.
\end{proof}

The following proposition holds for any connected acyclic quiver of wild type that has at least three vertices.

\begin{Prop} Let $Q$ be a connected acyclic quiver of wild type with at least three vertices. Then there are infinitely many irrational accumulation points of real Schur roots. Moreover, if there exists an isotropic Schur root, then there are infinitely many rational accumulation points of real Schur roots.
\end{Prop}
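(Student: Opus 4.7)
The plan is to handle the two assertions separately.

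For the infinitude of irrational accumulation points, I apply Proposition \ref{Prop1} to the wild quiver $Q$. This yields a sequence of exceptional sequences $((X_i,Y_i))_{i \ge 1}$ such that each $\C(X_i,Y_i)$ is wild and $(y^-(\C(X_i,Y_i)))_{i \ge 1}$ converges to $y^-$ in $\Delta_Q$. Each $y^-(\C(X_i,Y_i))$ is an accumulation point of the dimension vectors of the infinitely many preprojective objects of $\C(X_i,Y_i)$, and these are real Schur roots of $\rep(Q)$ since the thick subcategory generated by an exceptional sequence embeds fully in $\rep(Q)$. Moreover, as recalled just before Proposition \ref{Prop1}, the two isotropic points of $\ell_{X_i,Y_i}$ are irrational when $\C(X_i,Y_i)$ is wild, so each $y^-(\C(X_i,Y_i))$ is irrational. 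The proposition immediately preceding the present one guarantees $y^- \notin \ell_{X_i,Y_i}$, so $y^-(\C(X_i,Y_i)) \ne y^-$ for every $i$; a convergent sequence that never attains its limit must have infinitely many distinct terms, giving infinitely many pairwise distinct irrational accumulation points of real Schur roots.

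For the rational part, suppose an isotropic Schur root $\delta$ exists; by Theorem \ref{TheoAccuPoints} it is a rational accumulation point of real Schur roots, so it suffices to produce infinitely many isotropic Schur roots. Pick a Schur representation $M$ with $d_M = \delta$. Since $\delta$ is imaginary, $M$ is neither projective nor injective (their indecomposable dimension vectors are real), so $M$ is regular in $\rep(Q)$ and $\tau^i M$ is a well-defined regular indecomposable for every $i \in \Z$. Passing to the bounded derived category $D^b(\rep(Q))$, the AR-translate $\tau$ extends to an auto-equivalence, so $\End(\tau^i M) \cong \End(M) = k$; hence each $\tau^i M$ is Schur, and its dimension vector $C^i \delta$ is a positive imaginary root (as $\mathcal{W}_Q$ preserves $\Phi^+_{\mathrm{Im}}$) satisfying $q(C^i \delta) = q(\delta) = 0$. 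Therefore every $C^i \delta$ is an isotropic Schur root.

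It remains to check that the $C^i \delta$ comprise infinitely many distinct rays in $\Delta_Q$. By the Pena-type convergence used in Lemma \ref{Lemmay+}, applied to the regular indecomposable $M$, the rays $[C^i \delta] = [d_{\tau^i M}]$ converge to $[y^+]$ in $\Delta_Q$ as $i \to +\infty$. If the sequence $([C^i\delta])_{i \ge 0}$ contained only finitely many distinct rays, one of them would have to coincide with the limit $[y^+]$; but every $C^i\delta$ is a rational vector while $[y^+]$ is irrational, a contradiction. This distinctness step — the clash between the irrational limit and the rationality of each Coxeter-translate — is the main subtlety of the argument.
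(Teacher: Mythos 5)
Your proof is correct and takes essentially the same approach as the paper (Lemma \ref{LemmaRankTwo} plus Coxeter iteration for the irrational part, Coxeter iterates of an isotropic Schur root for the rational part), but you supply a cleaner distinctness argument in both halves. The paper establishes distinctness in the first part by arguing that a coincidence would make $\ell_1$ an eigenvector of a power of $C^{-1}$, and in the second part simply asserts that $\{C^i(p)\}$ is infinite; you instead observe in each case that the sequence converges to a special eigenvector $y^{\mp}$ which no term can equal (by the proposition immediately preceding in the first case, by irrationality of $y^+$ versus rationality of $C^i\delta$ in the second), so a convergent sequence never attaining its limit must have infinitely many distinct values. This is tidier and avoids the slight imprecision in the paper's eigenvector step.

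Two small slips to correct. First, the justification for $M$ being regular should read ``neither preprojective nor preinjective'' rather than ``neither projective nor injective'': every preprojective and every preinjective indecomposable has real dimension vector, so an indecomposable with imaginary dimension vector is regular, but non-projective and non-injective alone would not suffice. Second, the convergence $[d_{\tau^i M}] \to [y^+]$ for $M$ regular is Lemma \ref{LemmaGrowthTau}, not Lemma \ref{Lemmay+} (which concerns preprojective and preinjective roots). Neither affects the validity of the argument.
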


\begin{proof}
By Lemma \ref{LemmaRankTwo}, there exists an exceptional sequence $(X,Y)$ with $X$ projective and $Y$ non-preinjective such that $\langle d_Y, d_X \rangle \le -3$. Then the line $\ell_{X,Y}$ cuts the quadric at two distinct irrational points $\ell_1, \ell_2$ which are accumulation points.  Let $C$ denote the Coxeter transformation. The points in $\{C^{-i}(\ell_j) \mid j=1,2, i \ge 0\}$ are all irrational accumulation points. We claim that they are all distinct. Otherwise, one of $\ell_1, \ell_2$, say $\ell_1$, will be an eigenvector of a power of $C^{-1}$, hence an eigenvector of $C^{-1}$, since $C$ is non-singular. Now, both $\tau^{-i}X, \tau^{-i}Y$ converge to $y^-$. This gives $\ell_1 = y^-$, up to a positive scalar, a contradiction. This proves the first part of the statement. By Proposition \ref{PropAccuPoints}, any isotropic Schur root $p$ is an accumulation point of real Schur roots. So if there exists one isotropic Schur root $p$, then $\{C^i(p) \mid i \in \Z\}$ is an infinite family of isotropic Schur root and we are done. If there is no isotropic Schur root, then there is no rational accumulation point by Theorem \ref{TheoAccuPoints}.
\end{proof}

\section{Concluding remarks}

Observe that when $Q$ is not at most weakly hyperbolic, it seems harder to describe the accumulation points of real Schur roots in terms of subcategories of the form $\C(X,Y)$ where $(X,Y)$ is an exceptional sequence. For instance, we may have two isotropic Schur roots $\delta_1, \delta_2$ with $\delta_1 \perp \delta_2$ and $\delta_2 \perp \delta_1$ and we could have a rational accumulation point of real Schur roots which is a positive linear combination of $\delta_1, \delta_2$, but without being an isotropic Schur root. We do not know, however, if such a situation can occur. One should look at quivers with four vertices that are not at most weakly hyperbolic.

\medskip

We were able to prove that the canonical decomposition behaves continuously with the property of having a strictly imaginary Schur root as a summand. We believe that some continuity phenomena also occur for real Schur roots. Let $\alpha$ be a real Schur root and let $d$ be a dimension vector. Let $r_\alpha(d)$ be the multiplicity of $\alpha$ in the canonical decomposition of $d$. By Lemma \ref{NormalizedToNot}, $r_\alpha(d)s(\alpha)/s(d)$ is the coefficient of $\ch \alpha$ when writing $\ch d$ as a convex combination of the normalized Schur roots of its canonical decomposition. Let $f_\alpha(d) = r_\alpha(d)s(\alpha)/s(d)$. We conjecture that $f_\alpha$ is always continuous. It should hold, at least, when the quiver is at most weakly hyperbolic.

\bigskip

\noindent {\sc Ackowledgments:} I would like to thank NSERC and AARMS for their support while I was a postdoctoral fellow at the University of New Brunswick. This is where the project started. I would also like to thank Hugh Thomas for useful discussions and comments on the paper. Finally, I would like to thank the Department of Mathematics of the University of Connecticut for their support while I finished this paper.


\begin{thebibliography}{99}

\bibitem{Campo} N. A'Campo, {\em Sur les valeurs propres de la transformation de Coxeter},
Invent. Math. {\bf 33} (1976), no. 1, 61--67.

\bibitem{ASS} I. Assem, D. Simson and A. Skowro\'nski, {\em Elements of representation theory of associative algebras. Vol.
1.}, Techniques of representation theory. London Mathematical
Society Student Texts {\bf 65}, Cambridge University Press,
Cambridge, 2006.

\bibitem{Baer} D. Baer, {\em A note on wild quiver algebras and tilting modules}, Comm. Algebra {\bf 77} (1989), 751--757.

\bibitem{DWAlgo} H. Derksen and J. Weyman {\em On the canonical decomposition of quiver representations}, Compositio Math. {\bf 133} (2002), no. 3, 245--265.

\bibitem{DWSchurSeqn} \bysame, {\em The combinatorics of quiver representations},
Ann. Inst. Fourier (Grenoble) {\bf 61} (2011), no. 3, 1061--1131.

\bibitem{Drozd} Yu. A. Drozd, {\em Tame and wild matrix problems}, Representations and quadratic forms, Akad. Nauk Ukrain. SSR, Inst. Mat., Kiev {\bf 154} (1979), 39--74. (English translation: Amer. Math. Soc. Translations {\bf 128} (1986), 31--55.)

\bibitem{KacBook} V.G. Kac, {\em Infinite--dimensional Lie algebras},
Third edition. Cambridge University Press, Cambridge, 1990, xxii+400 pp.

\bibitem{Kac0} \bysame, {\em Infinite root systems, representations of graphs and invariant theory}, Invent.
Math. {\bf 56} (1980), 57--92.

\bibitem{Kac} \bysame, {\em Infinite root systems, representations of graphs and invariant theory. II.},
J. Algebra {\bf 78} (1982), no. 1, 141--162.

\bibitem{HHKU} D. Happel, S. Hartlieb, O. Kerner and L. Unger, {\em On perpendicular categories of stones over quiver algebras},
Comment. Math. Helv. {\bf 71} (1996), no. 3, 463--474.

\bibitem{Happel} \bysame, {\em Triangulated categories in the representation theory of finite-dimensional algebras},
London Mathematical Society Lecture Note Series {\bf 119}, Cambridge University Press, Cambridge, 1988.


\bibitem{Vivien1} C. Hohlweg, J.-P. Labbé and V. Ripoll {\em Asymptotical behaviour of roots of infinite Coxeter groups}, to appear in Canadian Journal of Mathematics, 2013 (electronic version), arXiv:1112.5415.

\bibitem{Vivien2} C. Hohlweg, M. Dyer and V. Ripoll, {\em Imaginary cones and limit roots of infinite Coxeter groups}, preprint, 2013, arXiv:1303.6710.

\bibitem{Vivien3} C. Hohlweg, J.-P. Préaux and V. Ripoll {\em On the limit set of root systems of Coxeter groups acting on Lorentzian spaces}, preprint, 2013, arXiv:1305.0052.

\bibitem{Pena} J. A. de la Peña, {\em Coxeter transformations and the representation theory of algebras}, In: Finite--dimensional algebras and related topics, (Ottawa, ON, 1992), 223--253, Kluwer Acad. Publ., Dordrecht, 1994.

\bibitem{Ringel} C.M. Ringel, {\em The spectral radius of the Coxeter transformations for a generalized Cartan matrix},
Math. Ann. {\bf 300} (1994), no. 2, 331--339.

\bibitem{SchofieldGeneral} A. Schofield, {\em General representations of quivers},
Proc. London Math. Soc. (3) {\bf 65} (1992), no. 1, 46--64.

\bibitem{Sch2} \bysame, {\em Semi-invariants of quivers}, J. London Math. Soc. {\bf 43} (1991), 383--395.

\bibitem{Strauss} H. Strauss, {\em On the perpendicular category of a partial tilting module},
J. Algebra {\bf 144} (1991), no. 1, 43--66.

\end{thebibliography}
\end{document}